\newtheorem{thm}{Theorem}[section]
\newtheorem{lemma}[thm]{Lemma}
\newtheorem{prop}[thm]{Proposition}
\newtheorem{cor}[thm]{Corollary}
\theoremstyle{definition}
\newtheorem{rmk}[thm]{Remark}
\newcommand{\ep}{\epsilon}
\newcommand{\mr}{\mathbb{R}}
\newcommand{\mh}{\mathcal{H}}
\newcommand{\mw}{\mathcal{W}}
\newcommand{\mz}{\mathcal{Z}}
\newcommand{\ds}{{\delta,\sigma}}
\newcommand{\dsz}{\delta, \sigma_0}
\newcommand{\dsy}{{\delta,\sigma(y)}}
\newcommand{\dsn}{{\delta_n,\sigma_n}}
\newcommand{\ls}{{\lambda, \sigma}}
\newcommand{\els}{{\ep^{\alpha}\lambda, \sigma}}
\newcommand{\elep}{{\ep^{\alpha}\lambda_{\ep}, \sigma_{\ep}}}
\newcommand{\hx}{H^1(X;\rho^{1-2s})}
\newcommand{\dmr}{D^1(\mr^{N+1}_+;t^{1-2s})}
\newcommand{\bg}{\bar{g}}
\newcommand{\hh}{\hat{h}}
\newcommand{\hs}{\hat{\sigma}}
\newcommand{\tf}{\tilde{f}}
\newcommand{\ww}{\widehat{W}}
\newcommand{\wu}{\widetilde{U}}
\newcommand{\whp}{\widehat{\Phi}}
\newcommand{\wtp}{\widetilde{\Phi}}
\newcommand{\pa}{\partial}
\newcommand{\la}{\left \langle}
\newcommand{\ra}{\right \rangle}
\renewcommand{\(}{\left(}
\renewcommand{\)}{\right)}
\begin{document}
\title{On perturbations of the fractional Yamabe problem}

\author{Woocheol Choi, Seunghyeok Kim}

\newcommand{\Addresses}{{
\bigskip \footnotesize

\noindent (Woocheol Choi) \textsc{Department of Mathematical Sciences, Seoul National University}\par\nopagebreak
\noindent \textit{E-mail address}: \texttt{chwc1987@math.snu.ac.kr}

\medskip
\noindent (Seunghyeok Kim) \textsc{Facultad de Matem\'{a}ticas, Pontificia Universidad Cat\'{o}lica de Chile, Avenida Vicu\~{n}a Mackenna 4860, Santiago, Chile}\par\nopagebreak
\noindent \textit{E-mail address}: \texttt{shkim0401@gmail.com}
}}

\date{\today}
\maketitle

\begin{abstract}
The fractional Yamabe problem, proposed by Gonz\'{a}lez-Qing (2013) \cite{CG}, is a geometric question which concerns the existence of metrics with constant fractional scalar curvature.
It extends the phenomena which were discovered in the classical Yamabe problem and the boundary Yamabe problem to the realm of nonlocal conformally invariant operators.
We investigate a non-compactness property of the fractional Yamabe problem by constructing bubbling solutions to its small perturbations.
\end{abstract}

{\footnotesize \textit{2010 Mathematics Subject Classification.} Primary: 35R11, Secondary: 58J05, 35B33, 35B44.}

{\footnotesize \textit{Key words and Phrases.}} fractional Yamabe problem, blow-up solutions, nonlocal equations with critical exponents

\allowdisplaybreaks
\tableofcontents

\section{Introduction}
Suppose that $(X^{N+1},g^+)$ is an asymptotically hyperbolic (A.H.) manifold with the conformal infinity $(M^N,[\hh])$
and $P_{\hh}^s = P^s[g^+, \hh]$ is the fractional Paneitz operator with the principal symbol $(-\Delta_{\hh})^s$.
We are concerned with two low order perturbations of the fractional Yamabe equation
\begin{equation}\label{eq_main_1}\tag_{$1_{\pm}$}
P_{\hh}^s u + f u = u^{{N+2s \over N-2s} \pm \ep} \quad \text{on } (M, \hh), \qquad u > 0 \quad \text{on } (M, \hh),
\end{equation}
and
\setcounter{equation}{1}
\begin{equation}\label{eq_main_2}
P_{\hh}^s u + \ep f u = u^{N+2s \over N-2s} \quad \text{on } (M, \hh), \qquad u > 0 \quad \text{on } (M, \hh)
\end{equation}
where $f$ is a $C^1$-function on $M$, $\ep > 0$ is a small parameter and $s \in (0,1)$.
(Equations $(1_+)$ and $(1_-)$ correspond to the supercritical and subcritical problem, respectively.)
As one can observe, \eqref{eq_main_1} is a manifold analogue of the fractional Lane-Emden-Fowler equation with a slightly subcritical or supercritical exponent,
while \eqref{eq_main_2} can be viewed as a version of the fractional Brezis-Nirenberg problem on A.H. manifolds.

\medskip
For $s \in (0,1)$, Gonz\'{a}lez-Qing \cite{GQ} and Gonz\'{a}lez-Wang \cite{GW} studied the fractional Yamabe problem
which is a geometric problem to find a metric $\hh_0$ in the conformal class $[\hh]$ of $\hh$ with the constant fractional scalar curvature $Q_{\hh_0}^s = P_{\hh_0}^s(1)$.
The existence of such a metric follows from a solution of the non-local equation
\begin{equation}\label{eq_yamabe}
P_{\hh}^s u = c u^{N+2s \over N-2s} \quad \text{on } (M, \hh), \qquad u > 0 \quad \text{on } (M, \hh)
\end{equation}
with some $c \in \mr$.
As in the classical Yamabe problem, the sign of $c$ depends on that of the fractional Yamabe invariant
\begin{equation}\label{eq_yamabe_inv}
\mu_{\hh}^s(M) = \inf_{h \in [\hh]} {\int_M Q_h^s dv_h \over \(\int_M dv_h\)^{N-2s \over N}} = \inf_{\substack{u \in C^{\infty}(M),\\ u > 0}} {\int_M uP_{\hh}^su dv_{\hh} \over \(\int_M u^{2N \over N-2s} dv_{\hh}\)^{N-2s \over N}},
\end{equation}
and the fractional Yamabe problem is solvable if the inequality \[-\infty < \mu_{\hh}^s(M) < \mu_{h_c}^s\(S^N\)\]
holds where the manifold $(S^N, h_c)$ is the $N$-dimensional unit sphere with the canonical metric as the boundary of the Poincar\'e ball.
In \cite{GQ,GW}, it is shown that the above inequality is valid for A.H. manifolds with non-umbilic boundary or the non-locally conformally flat A.H. manifolds with umbilic boundary under some additional dimensional and technical assumptions.

Since the operator $P_{\hh}^s = P^s[g^+, \hh]$ reduces to the conformal Laplacian if $s = 1$ and $(X,g^+)$ is Poincar\'e-Einstein  (refer to \eqref{eq_paneitz_1}),
the fractional Yamabe problem can be understood as a direct generalization of the classical one towards the non-local conformally invariant operators.
This fact being one of the reasons, recently intensive studies on the fractional conformal operators have been conducted by lots of researchers.
We refer \cite{GMS, JX, ACH, CLZ, JLX, JLX2, JLX3, QR} and references therein where closely related problems to ours, e.g., the singular fractional Yamabe problem,
the fractional Yamabe flow and the fractional Nirenberg problem are investigated.

\medskip
After the classical Yamabe problem is completely solved by the contribution of Yamabe, Trudinger, Aubin and Schoen \cite{Ya, Tr, Au, Sc}, Schoen proposed a question on the compactness of its solution set.
Remarkably, it turned out that the solution set is indeed compact in the $C^2$-topology provided that the dimension of the background manifold is at most 24 \cite{KMS},
but it may be false for some manifolds whose dimension is greater than or equal to 25 \cite{Br, BM}.

Furthermore, as a low order perturbation, equations \eqref{eq_main_1} and \eqref{eq_main_2} in the local case $s = 1$
have been in the limelight (see \cite{LZ, Dr, Dr2, DH, DH2, MPV, PVe, EPV} among other possible references).
It was revealed that these equations also have an interesting feature.
In particular, if the operator $P^1_{\hh} + f$ is coercive, then the solution set of $(1_-)$ should be compact when $N \ge 3$ and $f < 0$ in $M$ (\cite{Dr2}),
but non-compact in the case that $N \ge 4$ and there is a region of $M$ where $f > 0$ (\cite{MPV, PVe}).

The main objective of this paper is to extend previous results regarding the compactness or stability property of conformal operators to the nonlocal setting $s \in (0,1)$ by considering \eqref{eq_main_1} and \eqref{eq_main_2}.
As a result, a perturbation of the boundary Yamabe problem (corresponding to the case $s = 1/2$) is partly covered here as a byproduct of our main results in the case of \eqref{eq_main_1}.
For the existence results of the boundary Yamabe problem in the Euclidean case and in the setting of compact Riemannian manifolds, see Adimurthi-Yadava \cite{AY}, Escobar \cite{Es} and Marques \cite{Ma}.
We also should mention that equations with $s = 2$ (see \eqref{eq_paneitz_2}) were investigated in Deng-Pistoia \cite{DP} and Pistoia-Vaira \cite{PV}.

\medskip
For the existence of solutions to the fractional Yamabe problem,
equivalent minimization problems to \eqref{eq_yamabe_inv} which only contain local differential operators can be derived
by exploiting the extension theorem of Chang and Gonz\'{a}lez \cite{CG}.
The authors of \cite{GQ, GW} utilized this observation to deduce the existence result, instead finding a minimizer that attains the Yamabe invariant $\mu^s_{\hh}(M)$ in a direct manner.
After the fundamental extension result of Caffarelli and Silvestre \cite{CS} for the fractional Laplacians on $\mr^N$, such a standpoint,
introducing and studying equivalent extended local problems rather than considering nonlocal problems itself, has been highlighted by many researchers.
See for example \cite{CT, BCPS, ST, CS2, DDW, CKL} and references therein.
In this paper, we keep on use this strategy.

According to \cite{CG} (see Proposition \ref{prop_ext} below), it is natural to consider the following degenerate equation with the weighted Neumann boundary condition
\begin{equation}\label{eq_Dirichlet}
-\text{div}\(\rho^{1-2s} \nabla U\) + E(\rho)U = 0 \quad \text{in } (X, \bg) \quad \text{and} \quad \pa_{\nu}^sU = 0 \quad \text{on } (M, \hh)
\end{equation}
where
\begin{equation}\label{eq_weighted_normal}
\pa_{\nu}^s U := -\kappa_s \cdot \lim_{\rho \to 0+} \rho^{1-2s} {\pa U \over \pa \rho} \quad \text{with } \kappa_s:= {\Gamma(s) \over 2^{1-2s} \Gamma(1-s)}
\end{equation}
($\nu$ is the outward normal vector to $M = \pa X$)
in order to understand equations with the fractional Paneitz operator $P^s_{\hh}$.
Let $H$ be the trace of the second fundamental form $\pi$ of $(M, \hh)$ as the boundary of $(X, \bg)$ and $\hx$ the weighted Sobolev space whose precise definition is given in Section \ref{sec_scheme}.
Our paper deals with the situation when the first eigenvalue of \eqref{eq_Dirichlet} is positive (modulo the effect of the function $\tf$ to be introduced below),
that is, there exists a constant $C > 0$ such that the inequality
\begin{equation}\label{eq_coercive}
\int_X \(\rho^{1-2s} |\nabla U|^2_{\bg} + E(\rho) U^2\)dv_{\bg} + \int_M \tf U^2 dv_{\hh} \ge C \int_X \rho^{1-2s} U^2 dv_{\bg} \end{equation}
holds for arbitrary functions $U \in \hx$,
where the function $\tf$ on $M$ is defined to be
\[\tf = \begin{cases}
f &\text{if \eqref{eq_main_1} is considered},\\
0 & \text{if \eqref{eq_main_2} is considered}.
\end{cases}\]

Under the coercivity assumption \eqref{eq_coercive}, we have the following non-compactness result for \eqref{eq_main_1}.
Recall that for any $C^1$ function $\psi$ on $M$, a critical point $x_0 \in M$ is called to be $C^1$-stable if there is a small neighborhood $\Lambda$ of $x_0$ in $M$ such that
$\nabla \psi(x) = 0$ for some $x \in \overline{\Lambda}$ implies $x = x_0$ and deg$(\nabla\psi, \Lambda, 0) \ne 0$ (see \cite{Li2}).
Here deg denotes the Brouwer degree.
It is well-known that any isolated local minimum point and maximum point is a $C^1$-stable critical point. Moreover, so is a nondegenerate critical point if $\psi$ is a $C^2$-function.
\begin{thm}\label{thm_main_1}
Suppose that $s \in (0,1)$, $N > \max\{4s,1\}$ and $H = 0$ if $s \in [1/2,1)$.
Assume also that \eqref{eq_coercive} is true.
\begin{enumerate}
\item If the function $f$ possesses a $C^1$-stable critical point $\sigma_0 \in M$ such that $f(\sigma_0) > 0$,
then for sufficiently small $\ep > 0$ equation $(1_+)$ admits a positive solution $u_{\ep} \in C^{1,\beta}(M)$ which blows up at $\sigma_0$ as $\ep \to 0$.
\item If the function $f$ possesses a $C^1$-stable critical point $\sigma_0 \in M$ such that $f(\sigma_0) < 0$,
then for sufficiently small $\ep > 0$ equation $(1_-)$ admits a positive solution $u_{\ep} \in C^{1,\beta}(M)$ which blows up at $\sigma_0$ as $\ep \to 0$.
\end{enumerate}
Here the H\"{o}lder exponent $\beta \in (0,1)$ is determined by $N$ and $s$.
\end{thm}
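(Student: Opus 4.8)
The plan is to use the Lyapunov–Schmidt finite-dimensional reduction applied to the extended local problem \eqref{eq_Dirichlet}, following the now-standard strategy for constructing blowing-up solutions to slightly subcritical/supercritical problems. Using the extension theorem (Proposition \ref{prop_ext}), one rewrites $(1_\pm)$ as a degenerate elliptic boundary value problem on $(X,\bg)$ with a weighted Neumann condition involving $\pa_\nu^s$, and looks for solutions in $\hx$. The natural approximate solutions are the $s$-harmonic extensions of the standard bubbles $W_\ds$ concentrating at a point $\sigma$ near $\sigma_0$ with concentration parameter $\delta>0$; in Fermi coordinates around $\sigma$ these are modeled on the Caffarelli–Silvestre extension of the Aubin–Talenti instanton on $\mr^{N+1}_+$. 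One writes the solution as $U = \ww_\ds + \Phi$ with $\Phi$ in the space orthogonal (in the relevant weighted inner product) to the approximate kernel spanned by $\pa_\delta \ww_\ds$ and $\pa_{\sigma_i} \ww_\ds$, $i=1,\dots,N$.

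First I would set up the functional-analytic framework: define the energy functional $I_\ep$ on $\hx$ whose critical points are solutions of the extended problem, verify via \eqref{eq_coercive} that the linearized operator at $\ww_\ds$ is invertible on the orthogonal complement of the approximate kernel with a norm bound uniform in $\ep$ and in $(\delta,\sigma)$ ranging over a suitable set (roughly $\delta \sim \ep^{\alpha}$ for an appropriate exponent $\alpha=\alpha(N,s)$ and $\sigma$ near $\sigma_0$). This is where the coercivity hypothesis is essential — it rules out that the extra potential term $\tf U^2 = fU^2$ destroys the spectral gap. Then I would solve the auxiliary equation by the contraction mapping principle to obtain $\Phi = \Phi_{\ds,\ep}$ depending smoothly on the parameters, with an explicit estimate $\|\Phi_{\ds,\ep}\| = O(\ep^{\text{something}} + \ldots)$ coming from the size of the error $\mathcal{R}_\ds := I_\ep'(\ww_\ds)$ evaluated in the orthogonal complement. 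The error estimate requires expanding $\pa_\nu^s \ww_\ds$ against $W_\ds^{p\pm\ep}$ and controlling the geometric terms: the deviation of $\bg$ from the flat metric (curvature of $M$, the mean curvature $H$ — which is why $H=0$ is imposed when $s\ge 1/2$, so that the leading boundary correction vanishes), and the $\ep$-perturbation of the exponent.

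Next I would carry out the reduced (finite-dimensional) variational problem: set $\Psi_\ep(\delta,\sigma) := I_\ep(\ww_\ds + \Phi_{\ds,\ep})$ and prove a $C^1$-expansion of the form
\[
\Psi_\ep(\delta,\sigma) = c_0 + \ep\, \Big( a_1 f(\sigma)\, \delta^{2s} \mp a_2 \log\delta + \text{l.o.t.}\Big) + o(\ep)
\]
(with the precise powers of $\delta$ and the constants $a_1,a_2>0$ dictated by the integrals $\int_{\mr^{N+1}_+} t^{1-2s}|\nabla \wu|^2$, $\int_{\mr^N} W^{p+1}$, $\int_{\mr^N}W^2$), uniformly with its derivatives. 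Optimizing in $\delta$ forces $\delta = \delta_\ep \sim \ep^{\alpha}$ provided $f(\sigma_0)$ has the right sign ($f>0$ for $(1_+)$, $f<0$ for $(1_-)$) — this sign condition is exactly what makes the $\delta$-variable nondegenerate at an interior maximum/minimum. After reducing out $\delta$, the leading dependence on $\sigma$ is through $f(\sigma)$, so a $C^1$-stable critical point of $f$ at $\sigma_0$ produces, via a degree/local-minimax argument on the $C^1$-stable neighborhood $\Lambda$, a critical point $(\delta_\ep,\sigma_\ep)$ of $\Psi_\ep$; standard arguments then show the corresponding $U_{\ds_\ep,\ep}$ is a genuine positive solution, and its trace $u_\ep$ on $M$ solves $(1_\pm)$, is $C^{1,\beta}$ by regularity theory for the degenerate equation, and blows up at $\sigma_0$ since $\delta_\ep\to 0$.

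The main obstacle I expect is the energy expansion and the uniform invertibility of the linearized operator in the genuinely nonlocal / degenerate-weighted setting: one must handle the weight $\rho^{1-2s}$ and the term $E(\rho)$ in \eqref{eq_Dirichlet}, work in the correct weighted Sobolev space $\hx$ rather than $\dmr$, and control all error terms arising from passing from the model half-space problem on $\mr^{N+1}_+$ with weight $t^{1-2s}$ to the manifold $(X,\bg)$ in Fermi coordinates — in particular isolating which geometric quantities ($H$, the ambient and boundary curvatures, the conformal factor relating $\bg$ to the adapted metric) enter at the order $\ep$ that competes with the $f(\sigma)\delta^{2s}$ term, and verifying that under $N>\max\{4s,1\}$ (and $H=0$ when $s\ge 1/2$) these geometric contributions are lower order so that $f$ indeed governs the reduced functional. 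The delicate point is that the relevant powers of $\delta$ depend on $N$ versus $4s$, which is presumably the origin of the dimensional restriction.
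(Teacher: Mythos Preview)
Your plan is correct and matches the paper's approach essentially step for step: pass to the extended degenerate problem, run Lyapunov--Schmidt around truncated bubbles $\mw_{\ep^{1/(2s)}\lambda,\sigma}$, invert the linearized operator on the orthogonal complement of the approximate kernel, solve for the remainder by contraction, and expand the reduced energy $J_\ep(\lambda,\sigma)$ in $C^1$ to obtain the leading part $\tfrac{d_1}{2}f(\sigma)\lambda^{2s}\pm\tfrac{(N-2s)^2d_0}{4N}\log\lambda$, from which a $C^1$-stable critical point of $f$ with the appropriate sign yields a critical point via a degree argument. The one technical point you do not address, and which the paper handles with some care, is that for the \emph{supercritical} problem $(1_+)$ the trace of $\hx$ does not embed into $L^{p+1+\ep}(M)$; the paper therefore works in the smaller space $\mh_\ep=\{U\in\hx:\ i(U)\in L^{q_\ep}(M)\}$ with $q_\ep=(p+1)+\tfrac{N}{2s}\ep$ and proves a regularity estimate (Lemma~\ref{lemma_trace}) guaranteeing that $i\circ i_{\tf}^*$ maps $L^{q_\ep/(p+\ep)}(M)$ into $L^{q_\ep}(M)$, which is what makes both the fixed-point argument and the energy functional well-posed in the supercritical regime.
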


\noindent Furthermore, we can obtain an existence theorem for \eqref{eq_main_2} where the geometric object $H$ plays an important role.
\begin{thm}\label{thm_main_2}
Suppose that $s \in (0,1/2)$, $N \ge 2$, 
as well as \eqref{eq_coercive} hold.
Also, let $\lambda: M \to [0, \infty]$ be a function defined as
\[\lambda(\sigma) = \begin{cases}
\(\dfrac{-4N(1-2s)sd_1f(\sigma)}{\(2N(N-1)+\(1-4s^2\)\)d_s H(\sigma)}\)^{1 \over 1-2s} &\text{if } H(\sigma) \ne 0 \text{ and } \dfrac{f(\sigma)}{H(\sigma)} \in (-\infty, 0],\\
\infty &\text{otherwise}
\end{cases} \]
where the positive numbers $d_1$ and $d_s$ are given in Subsection \ref{subsec_energy}.
If $(\lambda_0, \sigma_0) := (\lambda(\sigma_0), \sigma_0)$ is a $C^1$-stable critical point of the function
\[\widetilde{J}(\ls) = {d_1 \over 2}f(\sigma)\lambda^{2s} + \({2N(N-1)+\(1-4s^2\) \over 4N(1-2s)}\) d_s H(\sigma)\lambda \quad \text{for } (\ls) \in (0,\infty) \times M\]
such that $\lambda(\sigma_0) > 0$, then for $\ep > 0$ small enough equation \eqref{eq_main_2} has a positive solution $u_{\ep} \in C^{1, \beta}(M)$ which blows up at $\sigma_0$ as $\ep \to 0$.
Furthermore $\sigma_0$ is necessarily a critical point of the function $|f|/|H|^{2s}$ on $M$.
The exponent $\beta \in (0,1)$ again depends on $N$ and $s$.
\end{thm}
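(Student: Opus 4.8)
The plan is to use the Lyapunov–Schmidt reduction method for the extended local problem associated with \eqref{eq_main_2} via the Chang–Gonz\'{a}lez extension (Proposition \ref{prop_ext}). First I would set up the ansatz: working in Fermi coordinates around a point $\sigma \in M$, I take as an approximate solution a suitably rescaled and projected standard bubble $W_{\ls}$ — the extension of the Aubin–Talenti-type profile solving $(-\Delta_{\hh})^s W = W^{(N+2s)/(N-2s)}$ on $\mr^N$ — with concentration parameter $\lambda > 0$ and concentration point $\sigma$. Because of the weighted Neumann condition $\pa_\nu^s U = 0$ and the presence of the second fundamental form, one must correct $W_{\ls}$ by lower-order terms (a function handling the mean curvature $H$ and the boundary geometry); here the hypothesis $s \in (0,1/2)$ is essential, since it is precisely in this range that the curvature term $H\lambda$ enters at the same order as $f\lambda^{2s}$ and is \emph{not} dominated by the principal bubble interaction, which is what makes the reduced energy $\widetilde{J}$ nontrivial in both variables.

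Next I would carry out the finite-dimensional reduction. Write the solution as $U = W_{\ls} + \phi$, decompose $\hx$ into the approximate kernel $K_{\ls}$ (spanned by $\pa_\lambda W_{\ls}$ and the $N$ derivatives $\pa_{\sigma_i} W_{\ls}$) and its orthogonal complement $K_{\ls}^\perp$, and solve the projected equation on $K_{\ls}^\perp$ for $\phi = \phi_{\ls}$ by a contraction mapping argument. This requires: (i) a uniform invertibility estimate for the linearized operator on $K_{\ls}^\perp$ — following from coercivity \eqref{eq_coercive}, the nondegeneracy of the standard bubble for $(-\Delta)^s$, and a careful analysis of the weighted quadratic form; (ii) an estimate of the error $\|S(W_{\ls})\|$ of the approximate solution in the appropriate dual norm, which should be $O(\ep) + O(\lambda^{2s}) + O(\lambda)$ up to logarithmic factors; and (iii) a Lipschitz estimate for the nonlinear remainder. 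These then give $\|\phi_{\ls}\| = o(1)$ uniformly for $(\ls)$ in a compact region of $(0,\infty)\times M$ away from the degenerate set.

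Then I would compute the reduced functional $\widetilde{J}_\ep(\ls) := I_\ep(W_{\ls} + \phi_{\ls})$, where $I_\ep$ is the energy functional attached to the extended problem. Expanding, the bubble self-energy contributes a constant, the term $\ep \int_M f W_{\ls}^2$ produces (after rescaling) the leading contribution $\tfrac{d_1}{2} \ep\, f(\sigma) \lambda^{2s}$ plus higher order, the boundary curvature interaction produces $c_s\, \ep\, H(\sigma) \lambda$ with the stated constant, and $\phi_{\ls}$ only affects terms of strictly higher order. Factoring out the common power of $\ep$, the reduced functional is $\ep\,\widetilde{J}(\ls) + o(\ep)$ in $C^1_{\mathrm{loc}}$. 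A $C^1$-stable critical point of $\widetilde{J}$ — whose location forces $\lambda(\sigma)$ to take the explicit value displayed (obtained by solving $\pa_\lambda \widetilde{J} = 0$) and forces $\sigma_0$ to be a critical point of $|f|/|H|^{2s}$ (obtained by substituting $\lambda(\sigma)$ back and differentiating in $\sigma$) — persists under the $o(\ep)$ perturbation by the degree-theoretic characterization of $C^1$-stability, yielding a critical point $(\lambda_\ep, \sigma_\ep)$ of $\widetilde{J}_\ep$. The corresponding $U_{\elep} + \phi_{\elep}$ is then a genuine positive solution of the extended problem, and its trace $u_\ep$ solves \eqref{eq_main_2}; elliptic regularity for the degenerate equation gives $u_\ep \in C^{1,\beta}(M)$, and by construction $u_\ep$ blows up at $\sigma_0$ since $\lambda_\ep \to \lambda_0 > 0$ forces concentration.

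The main obstacle I expect is the precise expansion of the reduced energy, specifically isolating the $H(\sigma)\lambda$ term with the exact constant $c_s = \bigl(2N(N-1)+(1-4s^2)\bigr)d_s / \bigl(4N(1-2s)\bigr)$: this requires an accurate asymptotic analysis of how the boundary second fundamental form perturbs both the metric volume element and the weighted conormal derivative in Fermi coordinates, and then integrating the resulting first-order correction against the bubble — a delicate computation with the $t^{1-2s}$ weight where the constraint $s < 1/2$ is exactly what keeps these integrals convergent and at the claimed order. A secondary difficulty is constructing a sufficiently accurate corrector so that the error estimate (ii) above is genuinely of lower order than $\ep \lambda$, which is needed for the reduction to see the curvature term.
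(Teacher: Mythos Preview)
Your overall strategy---Lyapunov--Schmidt reduction on the extended problem, expansion of the reduced energy, persistence of a $C^1$-stable critical point---is exactly the route the paper takes. However, two points of your sketch are off and would lead you astray if you carried them out as written.

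First, no corrector beyond the truncated bubble is needed, contrary to what you suggest at the end. The paper uses the bare cut-off extension $\mw_{\delta,\sigma}$ of the Aubin--Talenti profile as the approximate solution, and the error estimate (Lemma~\ref{lemma_error_est}) already gives $\|\Phi_{\delta,\sigma}\|_{\tf} = O(\ep^{\gamma})$ with $\gamma = \frac{1-s}{1-2s}-\zeta_0 > \frac{1}{2(1-2s)}$, so that $\|\Phi\|^2 = o(\ep^{1/(1-2s)})$ is strictly below the main order of the reduced energy. The curvature term $H(\sigma)\lambda$ does \emph{not} come from a corrector; it appears automatically in the expansion of $I_\ep(\mw_{\delta,\sigma})$ because the metric $\bg$ is not flat. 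Concretely, it has two sources: the first-order term $-Ht$ in $\sqrt{|\bg|}$ and $2\pi^{ij}t$ in $h^{ij}$ (Lemma~\ref{lemma_metric_exp}) contribute through the Dirichlet energy, and the lower-order operator $E(\rho) \sim \frac{N-2s}{2}H\rho^{-2s}$ contributes through the mass term. These two pieces are combined via the identity of Lemma~\ref{lemma_appen_1} to produce the single constant in front of $H(\sigma)\lambda$.

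Second, your scaling is not quite right. The $H$ term carries \emph{no} factor of $\ep$---it is a geometric term of order $\delta$ in the concentration rate---whereas the $f$ term carries the factor $\ep$ from the perturbation and is of order $\ep\,\delta^{2s}$. Balancing $\delta \sim \ep\,\delta^{2s}$ forces $\delta = \ep^{1/(1-2s)}\lambda$ with $\lambda$ bounded, and the common factor in the reduced energy is $\ep^{1/(1-2s)}$, not $\ep$ (see \eqref{eq_energy_est_res_3}). With this correction your argument goes through; the final identification of $\sigma_0$ as a critical point of $|f|/|H|^{2s}$ follows by substituting $\lambda(\sigma)$ into $\widetilde{J}$ and differentiating, exactly as you say.
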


The analogous existence results to ours in the Euclidean setting,
that is, a proof for the existence of solutions for the fractional Lane-Emden-Fowler equation and the Brezis-Nirenberg problem in smooth bounded domains of $\mr^N$ can be found in \cite{CKL, DLS}.
While we are studying here a \textit{small perturbation} of equation \eqref{eq_yamabe} defined on \textit{general} manifolds to understand its non-compactness characteristic,
one may address a \textbf{dual} problem: to construct a \textit{particular} metric for which \textit{original} equation \eqref{eq_yamabe} has the solution set that is not $L^{\infty}$-bounded.
It is investigated in \cite{KMW}, which extends \cite{Br, BM, Al, WZ} to a nonlocal setting.

To deduce our existence result, we shall employ the finite dimensional Lyapunov-Schmidt reduction method.
As far as we know, this paper is the first attempt to apply the reduction procedure towards equations with the fractional Paneitz operators defined in general manifolds.
For applications of the reduction method to the fractional Laplacians in the Euclidean setting or the fractional Paneitz operators under a particular choice of the metric, we refer to \cite{CKL, DDW, KMW} and so on.

Our problems require more delicate computations compared to problems on Euclidean spaces.
The main reason making them harder is that the fractional Paneitz operator $P_{\hh}^s = P^s[g^+, \hh]$ depends not only on the metric $\hh$ on the boundary $M$, but also on the metric $g^+$ in the interior $X$.
In other words, the boundary $M$ does not contain whole information in contrast with problems with fractional Laplacians $(-\Delta)^s$ on the Euclidean spaces,
and so it is inevitable to look carefully how the interior $X$ plays a role in our problem.
This is achieved by inspecting the extended problem given in Proposition \ref{prop_ext}.
To overcome the other difficulties we face, we have to also establish a certain regularity result (Lemma \ref{lemma_trace}),
compute decay of the $s$-harmonic extensions of the bubbles \eqref{eq_bubble} (Lemma \ref{lemma_decay}),
use the weighted Sobolev trace inequality \eqref{eq_trace_mf} for compact manifolds elaborately,
employ the dual characterization of the norm \eqref{eq_sobolev_norm_2} in estimating the error term (Lemma \ref{lemma_error_est}) and others.

\bigskip \noindent \textbf{Notations.}

\noindent - An element of the upper half space $\mr^{N+1}_+$ is denoted by $(x,t)$ where $x \in \mr^N$ and $t > 0$.

\noindent - For any weakly differentiable function $U$ on $\mr^{N+1}_+$, we denote
$\nabla_x U = (\pa_{x_1} U, \cdots, \pa_{x_N} U)$ and $\nabla U = (\nabla_x U, \pa_t U)$.
Also $\pa_{x_i}$ is often written as $\pa_i$.

\noindent - $B_r^+ = B^{N+1}(0,r) \cap \mr^{N+1}_+$ is the $(N+1)$-dimensional half open ball of radius $r$ centered at the origin.

\noindent - $u_+ = \max\{u,0\}$ and $u_- = \max\{-u,0\}$.

\noindent - $\Gamma$ denotes the Gamma function.

\noindent - For any $N \in \mathbb{N}$ and $s \in (0, \min\{1, N/2\})$, we denote $p = {N+2s \over N-2s}$.

\noindent - $C > 0$ is a generic constant, which may change line by line.

\section{Preliminaries}
In this section, we present some geometric and analytical backgrounds to understand our problem.
Most of materials are taken from \cite{CG,GQ,Es,CS,CS2}.

\subsection{Review on conformal fractional Laplacians}\label{subsec_conf_frac}
Let $(X^{N+1}, g^+)$ be an $(N+1)$-dimensional smooth Riemannian manifold with the boundary $M^N$.
We call a function $\rho$ on the closure $\overline{X}$ of $X$ a defining function of the boundary $M$ if $\rho > 0$ in $X$, $\rho = 0$ on $M$ and $d \rho \ne 0$ on $M$.
The manifold $(X, g^+)$ is said to be conformally compact (C.C.) if there is a defining function $\rho$
making $(\overline{X}, \bg)$ be compact where $\bg := \rho^2g^+$.
Also, given the metric $\hh = \bg|_M$, the boundary $(M, [\hh])$ with the conformal class $[\hh]$ of $\hh$ is called the conformal infinity.
A C.C. metric $g^+$ is asymptotically hyperbolic (A.H.) if the sectional curvature approaches to -1 at the infinity $M$,
whose model case is the hyperbolic space:
\[(X, g^+) = (\mathbb{H}^{N+1}, g_{\mathbb{H}}) = \(\mr_+^{N+1}, {|dx|^2 + dt^2 \over t^2}\) \quad \text{or} \quad \(B^{N+1}, {4(|dx|^2 + dt^2) \over (1-|x|^2-t^2)^2}\).\]

According to Graham-Lee \cite{GL}, for an A.H. manifold $X$ and a representative $\hh$ for the conformal class on $(M, [\hh])$, there is a unique special defining function such that
\[g^+ = \rho^{-2}\(d\rho^2 + h_{\rho}\),\ h_{\rho} = \hh + O(\rho)\]
near $M$. It is called the geodesic boundary defining function.

Suppose that \verb"z" $\in \mathbb{C}$, $\text{Re}(\verb"z") > N/2$ and $f \in C^{\infty}(M)$.
Then, by \cite{MM, GZ}, unless $\verb"z"(N-\verb"z")$ is an $L^2$-eigenvalue of $-\Delta_{g^+}$, the following eigenvalue problem
\begin{equation}\label{eq_eigen_1}
\left[-\Delta_{g^+} - \verb"z"(N-\verb"z")\right] V = 0 \text{ in } X
\end{equation}
has a solution of the form
\begin{equation}\label{eq_eigen_2}
V = F\rho^{N-\verb"z"} + G\rho^{\verb"z"},\ F, G \in C^{\infty}(X) \text{ and } F|_{\rho = 0} = f.
\end{equation}
Throughout the paper the existence of such a solution is always assumed.
The scattering operator on $M$ is then defined to be
\[S(\verb"z")f = G|_M,\]
which is a meromorphic family of pseudo-differential operators in $\{\verb"z" \in \mathbb{C}: \text{Re}(\verb"z") > N/2\}$.
In addition, we introduce its normalization so called the fractional Paneitz operator $P_{\hh}^s$, namely
\[P_{\hh}^s = P^s[g^+,\hh] = \begin{cases}
-2^{2s}\dfrac{s\Gamma(s)}{\Gamma(1-s)} S\(\dfrac{N}{2} + s\) &\text{for } s \notin \mathbb{N},\\
(-1)^s2^{2s}s!(s-1)! \cdot \text{Res}_{\verb"z" = N/2 + s} S(\verb"z") &\text{for } s \in \mathbb{N},
\end{cases}\]
whose principal symbol is exactly $(-\Delta_{\hh})^s$.
In the special case that $(X,g^+)$ is Poincar\'e-Einstein (both C.C. and Einstein) and $s = 1$ or 2, we have
\begin{equation}\label{eq_paneitz_1}
P^1_{\hh}u = -\Delta_{\hh}u + {N-2 \over 4(N-1)}R_{\hh}u
\end{equation}
the usual conformal Laplacian, and
\begin{equation}\label{eq_paneitz_2}
P^2_{\hh}u = (-\Delta_{\hh})^2u - \text{div}_{\hh}\(\(\tilde{c}_1R_{\hh}\hh-\tilde{c}_2 \text{Ric}_{\hh}\)du\) + {N-4 \over 2}Q_{\hh}u
\end{equation}
the Paneitz operator.
Here $Q$ stands for the Branson's $Q$-curvature and $\tilde{c}_1, \tilde{c}_2 > 0$ are constants.
The important property of $P^s_{\hh}$ is that it is conformally covariant in the sense that
\[P^s_{\hh u^{4 \over N-2s}}\phi = u^{-{N+2s \over N-2s}}P^s_{\hh}(u\phi) \quad \text{for any function } u > 0 \text{ on } M.\]
Finally, we set the fractional scalar curvature $Q^s_{\hh}$ by $P^s_{\hh}(1)$.

\subsection{Caffarelli-Silvestre's result \cite{CS} and Chang-Gonz\'{a}lez's extension \cite{CG}}
In this subsection, we recall the observation of Chang and Gonz\'{a}lez \cite{CG} which identifies two fractional Laplacians arising in different contexts:
one given as normalized scattering operators \cite{GZ} described above and one originated from the Dirichlet-Neumann operators due to Caffarelli and Silvestre \cite{CS}.

\medskip
For $s \in (0,1)$, let $\dmr$ be the completion of $C_c^{\infty}(\overline{\mr^{N+1}_+})$ with respect to the weighted Sobolev norms
\[\|U\|_{\dmr} := \(\int_{\mr^{N+1}_+} t^{1-2s}|\nabla U(x,t)|^2dxdt\)^{1/2}\]
with the weight $t^{1-2s}$.
Furthermore, we designate by $H^s(\mr^N)$ the standard fractional Sobolev space given as
\[H^s\(\mr^N\) = \left\{u \in L^2\(\mr^N\): \|u\|_{H^s(\mr^N)} := \(\int_{\mr^N} \(1+|\xi|^{2s}\) \left|\hat{u}(\xi)\right|^2 d\xi\)^{1/2} < \infty \right\}\]
where $\hat{u}$ denotes the Fourier transform of $u$,
and define the fractional Laplacian $(-\Delta)^s: H^s(\mr^N) \to H^{-s}(\mr^N)$ to be
\[\widehat{((-\Delta)^s u)}(\xi) = \(2\pi|\xi|\)^{2s} \hat{u}(\xi) \quad \text{for any } \xi \in \mr^N  \text{ given } u \in H^s\(\mr^N\).\]
In the celebrated work of Caffarelli and Silvestre \cite{CS}, the authors found that if $U \in \dmr$ is a unique solution of the equation
\begin{equation}\label{eq_gamma-ext}
\left\{ \begin{array}{ll} \text{div}\(t^{1-2s} \nabla U\) = 0 &~\text{in}~ \mr^{N+1}_+,\\
U(x,0)= u(x) &~ \text{for}~ x \in \mr^N,
\end{array}\right.
\end{equation}
provided a fixed function $u \in H^s(\mr^N)$,
then $(-\Delta)^s u = \pa_{\nu}^s U|_{\mr^N}$ where the definition of the weighted normal derivative $\pa_{\nu}^s$ is given in \eqref{eq_weighted_normal}.
Let us call this $U$ the $s$-harmonic extension of $u$ and denote it by $\text{Ext}^s(u)$.

\medskip
It turned out that this extension result is a special case of the following proposition obtained by Chang and Gonz\'{a}lez \cite{CG}.
We also refer to Section 2 of \cite{GQ}.
\begin{prop} \label{prop_ext} (\cite[Theorem 5.1 and Theorem 4.3]{CG})
Let $(X^{N+1}, g^+)$ be an asymptotically hyperbolic manifold with the conformal infinity $(M^N, [\hh])$ and $\rho$ the geodesic defining function of $\hh$.
Assume also that $H = 0$ if $s \in (1/2, 1)$.
For a smooth function $u$ on $M$, if $V$ is a solution of \eqref{eq_eigen_1} and satisfies \eqref{eq_eigen_2} in which $f$ is substituted with $u$, the function $U := \rho^{\verb"z"-N}V$ solves
\[-\textnormal{div}\(\rho^{1-2s} \nabla U\) + E(\rho)U = 0 \quad \text{in } (X, \bg) \quad \text{and} \quad U = u \quad \text{on } (M, \hh)\]
given that $E(\rho) := \rho^{-1-\verb"z"}(-\Delta_{g^+}-\verb"z"(N-\verb"z"))\rho^{N-\verb"z"}$,
$2\verb"z" := N + 2s$ and $\bg := \rho^2g^+$. Moreover,
\[P^s_{\hh} u = \left\{ \begin{array}{ll}
\pa_{\nu}^s U & \text{for } s \in (0,1) \setminus \{1/2\}, \\
\pa_{\nu}^s U + {N-1 \over 2N} Hu &\text{for } s = 1/2.
\end{array}\right.\]
Here $H$ denotes the trace of the second fundamental form $(\pi_{ij}) = \(-\langle \nabla_{\pa_{\rho}} \pa_i, \pa_j \rangle_{\hh}\)$ on $M = \pa X$ and the operator $\pa_{\nu}^s$ is the weighted normal derivative defined in \eqref{eq_weighted_normal} with $t$ replaced by $\rho$.

For sufficiently small $r_1 > 0$, it also holds that
\begin{equation}\label{eq_E_local}
E(\rho) = {N-2s \over 4N} \left[R_{\bg} \rho^{1-2s} - \(R_{g^+}+N(N+1)\)\rho^{-1-2s}\right] \quad \text{on } M \times (0,r_1).
\end{equation}
\end{prop}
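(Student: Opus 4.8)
The statement splits into three essentially independent pieces, which I would establish in turn: the interior degenerate equation, the identification $P^s_{\hh}u = \pa_\nu^s U$ (plus the mean‑curvature term when $s=1/2$), and the local expression \eqref{eq_E_local} for $E(\rho)$. Write $\gamma := N-\verb"z" = (N-2s)/2$, so that $U = \rho^{-\gamma}V$, $\verb"z"-\gamma = 2s$ and $\gamma+1+2s = 1+\verb"z"$. The interior equation is a computation with the conformal change of the Laplace--Beltrami operator. Since $\bg = \rho^2 g^+$ on the $(N+1)$--manifold $X$,
\[\Delta_{g^+}\phi = \rho^2\Delta_{\bg}\phi - (N-1)\rho\,\la\nabla_{\bg}\rho,\nabla_{\bg}\phi\ra_{\bg},\]
and I would plug $\phi = V = \rho^{\gamma}U$ into $(-\Delta_{g^+}-\verb"z"(N-\verb"z"))V = 0$, expand by the product and chain rules, and group the output by order in $U$. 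The key algebraic coincidence is that the coefficient of $\la\nabla_{\bg}\rho,\nabla_{\bg}U\ra_{\bg}$ comes out to be $(2\gamma-(N-1))\rho^{\gamma+1} = (1-2s)\rho^{\gamma+1}$, exactly what is needed for the first‑ and second‑order terms in $U$ to reassemble into $\rho^{1+\verb"z"}\,\textnormal{div}_{\bg}(\rho^{1-2s}\nabla_{\bg}U)$; the remaining terms are proportional to $U$ and collect, by the very definition $E(\rho) = \rho^{-1-\verb"z"}(-\Delta_{g^+}-\verb"z"(N-\verb"z"))\rho^{N-\verb"z"}$, into $-\rho^{1+\verb"z"}E(\rho)U$. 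Thus $0 = \rho^{1+\verb"z"}\bigl[-\textnormal{div}_{\bg}(\rho^{1-2s}\nabla_{\bg}U)+E(\rho)U\bigr]$, and dividing by $\rho^{1+\verb"z"}>0$ gives the asserted equation. The boundary value is read off from \eqref{eq_eigen_2}: writing $V = F\rho^{\gamma}+G\rho^{\verb"z"}$ with $F,G$ smooth and $F|_M = u$, one has $U = F + G\rho^{\verb"z"-\gamma} = F + G\rho^{2s}$ near $M$, so $U|_M = u$ since $2s>0$.

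For the weighted Neumann data, I start again from $U = F+G\rho^{2s}$, which gives
\[\rho^{1-2s}\pa_\rho U = \rho^{1-2s}\pa_\rho F + 2s\,G + \rho\,\pa_\rho G,\]
hence $\pa_\nu^s U = -\kappa_s\lim_{\rho\to 0+}\rho^{1-2s}\pa_\rho U = -2s\kappa_s\,G|_M - \kappa_s\lim_{\rho\to 0+}\rho^{1-2s}\pa_\rho F$. Here $G|_M = S(\verb"z")u$ by the definition of the scattering operator and $2s\kappa_s = 2^{2s}s\Gamma(s)/\Gamma(1-s)$, so the first term is precisely $P^s_{\hh}u$ by the normalization of $P^s_{\hh}$. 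To handle the remaining limit I would invoke the known asymptotics of the scattering solution $V$ in the geodesic gauge (Graham--Zworski, Mazzeo--Melrose): $F = u + f_1\rho + f_2\rho^2 + \cdots$, where the indicial equation forces $f_1$ to be a universal (dimension‑dependent) constant times $Hu$, the mean curvature entering through the first $\rho$--coefficient of $h_\rho$. If $s\in(0,1/2)$ then $\rho^{1-2s}\pa_\rho F\to 0$ automatically; if $s\in(1/2,1)$ the hypothesis $H=0$ kills $f_1$, after which $\rho^{1-2s}\pa_\rho F = 2f_2\rho^{2-2s}+\cdots\to 0$; if $s=1/2$ then $\rho^{1-2s}\pa_\rho F\to f_1 = \tfrac{N-1}{2N}Hu$ once the constant is pinned down, which produces exactly the extra term $\tfrac{N-1}{2N}Hu$ in the formula for $P^s_{\hh}u$.

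Finally, for \eqref{eq_E_local} I would work in the geodesic gauge $\bg = d\rho^2+h_\rho$ near $M$, where $|\nabla_{\bg}\rho|_{\bg}\equiv 1$, so the definition of $E$ collapses to $E(\rho) = -\gamma\,\rho^{-2s}\Delta_{\bg}\rho$; substituting the conformal transformation law for the scalar curvature, $R_{g^+} = \rho^2R_{\bg}+2N\rho\Delta_{\bg}\rho - N(N+1)$ (valid in this gauge), and solving for $\Delta_{\bg}\rho$ yields \eqref{eq_E_local} after a line of algebra. The genuinely nontrivial ingredient of the whole argument is the second step: one needs the precise structure of the asymptotic expansion of $V$ and the fact that its first subleading coefficient is governed by the trace $H$ of the second fundamental form — this is what isolates the exceptional $s=1/2$ correction and explains the standing assumption $H=0$ for $s\in(1/2,1)$. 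Everything else is careful bookkeeping: the two conformal transformation laws (for $\Delta$ and for scalar curvature) and the matching of the Gamma‑function normalizations relating $\kappa_s$ to $P^s_{\hh}$.
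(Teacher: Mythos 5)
Your proposal cannot be compared with an argument in the paper, because the paper gives none: Proposition \ref{prop_ext} is quoted from Chang--Gonz\'alez \cite{CG} (Theorems 4.3 and 5.1), so the benchmark is the proof there. Your first and third steps are correct and are essentially those computations: with $z:=N/2+s$ and $\gamma:=N-z$, the conformal law for the Laplacian produces the coefficient $2\gamma-(N-1)=1-2s$ that reassembles the equation into $\rho^{1+z}\bigl[-\mathrm{div}_{\bar g}(\rho^{1-2s}\nabla U)+E(\rho)U\bigr]=0$; the boundary value follows from $U=F+G\rho^{2s}$; the identity $2s\kappa_s=2^{2s}s\Gamma(s)/\Gamma(1-s)$ correctly matches $-\kappa_s\cdot 2s\,G|_M$ with $P^s_{\hat h}u$; and in the geodesic gauge $E(\rho)=-\frac{N-2s}{2}\rho^{-2s}\Delta_{\bar g}\rho$ together with $R_{g^+}=\rho^2R_{\bar g}+2N\rho\Delta_{\bar g}\rho-N(N+1)$ gives \eqref{eq_E_local}.

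The genuine gap is in your second step at $s=1/2$. For $s\ne 1/2$ the first-order analysis in the gauge $g^+=\rho^{-2}(d\rho^2+h_\rho)$, using $\partial_\rho\log\det h_\rho|_{\rho=0}=-2H$, gives $(1-2s)f_1=\frac{N-2s}{2}Hu$, i.e.\ $f_1=\frac{N-2s}{2(1-2s)}Hu$; this confirms your conclusions for $s\in(0,1/2)$ (the limit of $\rho^{1-2s}\partial_\rho F$ vanishes) and for $s\in(1/2,1)$ (the hypothesis $H=0$ kills $f_1$). But at $s=1/2$ the indicial coefficient $1-2s$ of $f_1$ vanishes, because the exponent $\gamma+1$ collides with the second indicial root $z$; the indicial equation then does not determine $f_1$ at all, and instead the expansion generically acquires a $\rho^{z}\log\rho$ term whose coefficient is a multiple of $Hu$. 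Consequently $\rho^{1-2s}\partial_\rho F$ is not a quantity whose limit can be ``pinned down'' to $\frac{N-1}{2N}Hu$ by the argument you describe --- indeed the non-resonant value $\frac{N-2s}{2(1-2s)}Hu$ blows up as $s\to 1/2$ rather than tending to the claimed constant. This resonance is exactly why \cite{CG} devotes a separate theorem (their Theorem 4.3) to $\gamma=1/2$, where the mean-curvature correction is obtained by a different argument rather than by reading off the first coefficient of $F$. As written, your sketch proves the cases $s\in(0,1)\setminus\{1/2\}$ but does not yield the stated $s=1/2$ formula; you would need to reproduce the $\gamma=1/2$ argument of \cite{CG} (or otherwise handle the log-resonance) to close this.
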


\begin{rmk}
Since it holds that
\[R_{g^+} = - N(N+1) + N\rho \pa_\rho \log(\det h(\rho)) + \rho^2 R_{\bg} \quad \text{on } M \times (0,r_1)\]
and
\[\left. \pa_\rho \log(\det h(\rho))\right|_{\rho = 0} = \left. \text{Tr}\(h(\rho)^{-1} \pa_{\rho} h(\rho)\) \right|_{\rho = 0} = -2H,\]
the remainder term $E(\rho)$ in \eqref{eq_E_local} is reduced to
\begin{equation}\label{eq_E_local_2}
\begin{aligned}
E(\rho)(z) &= -\({N-2s \over 4}\) \pa_\rho \log(\det h(\rho))(\sigma) \rho^{-2s}\\
&= -\({N-2s \over 4}\) \left. \pa_\rho \log(\det h(\rho))\right|_{\rho = 0}(\sigma) \rho^{-2s} + O\(\rho^{1-2s}\)
= \({N-2s \over 2}\) H(\sigma) \rho^{-2s} + O\(\rho^{1-2s}\)
\end{aligned} \end{equation}
for $z = (\sigma, \rho) \in M \times (0,r_1)$.
\end{rmk}

\noindent In particular, our main equation \eqref{eq_main_1} is equivalent to the problem
\begin{equation}\label{eq_main_12}\tag{$15_{\pm}$}
\left\{\begin{array}{ll}
-\text{div}\(\rho^{1-2s} \nabla U\) + E(\rho) U = 0 &\text{in } (X, \bg),\\
\pa_\nu^s U = u^{p \pm \ep} - fu &\text{on } (M, \hh),\\
U = u > 0 &\text{on } (M, \hh)
\end{array}\right.
\end{equation}
and it remains the same as well except the second equation in \eqref{eq_main_12} is replaced by
\setcounter{equation}{15}
\begin{equation}\label{eq_main_21}
\pa_\nu^s U = u^p - \ep f u \quad \text{for } s \in (0,1/2) \quad \text{on } (M, \hh)
\end{equation}
if we deal with \eqref{eq_main_2}.

\medskip
In \cite{CG}, it is also proved that given a geodesic defining function $\rho$, there is another special defining function $\rho^*$ such that $E(\rho^*) = 0$.
\begin{prop} \label{prop_ext_2} (\cite[Theorem 4.7]{CG}, \cite[Proposition 2.2]{GQ})
Assume that $H = 0$ if $s \in (1/2, 1)$.
For a smooth function $u$ on $M$, if $V$ satisfies \eqref{eq_eigen_1} as well as \eqref{eq_eigen_2} in which $f$ is substituted with $u$, the function $U := (\rho^*)^{\verb"z"-N}V$ is a solution of
\begin{equation}\label{eq_ext_1}
-\textnormal{div}\(\(\rho^*\)^{1-2s} \nabla U\) = 0 \quad \text{in } (X, g^*) \quad \text{and} \quad U = u \quad \text{on } (M, \hh)
\end{equation}
where $g^* := (\rho^*)^2g^+$. Moreover $g^*|_M = \hh$, $(\rho^*/\rho)|_M = 1$ and
\begin{equation}\label{eq_ext_2}
P^s_{\hh} u = \pa_{\nu}^s U + Q_{\hh}^s u
\end{equation}
where $Q_{\hh}^s$ is the fractional scalar curvature and
the operator $\pa_{\nu}^s$ is defined in \eqref{eq_weighted_normal} with $t$ substituted with $\rho^*$.
\end{prop}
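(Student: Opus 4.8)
The plan is to realize $\rho^*$ as a suitable power of a \emph{positive} solution of the eigenvalue equation \eqref{eq_eigen_1} with trivial boundary data, and then to read off all three assertions from the computation underlying Proposition~\ref{prop_ext}, now carried out with $\rho^*$ in place of the geodesic defining function $\rho$. Recall that $z=(N+2s)/2$, so $N-z=(N-2s)/2>0$ and $2z-N=2s$. Let $V_1$ be the solution of \eqref{eq_eigen_1}--\eqref{eq_eigen_2} furnished by the standing assumption with boundary datum $f\equiv 1$, so that $V_1=F_1\rho^{N-z}+G_1\rho^{z}$ with $F_1,G_1\in C^\infty(\overline X)$ and $F_1|_M=1$. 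The first --- and, I expect, the most delicate --- step is to show that $V_1>0$ throughout $X$: near $M$ this is immediate from $F_1|_M=1$ and $N-z>0$, but in the interior it requires a maximum principle for $-\Delta_{g^+}-z(N-z)$, which one gets from the fact that on an asymptotically hyperbolic manifold the number $z(N-z)=(N^2-4s^2)/4$ lies strictly below the bottom $N^2/4$ of the $L^2$-spectrum of $-\Delta_{g^+}$ (equivalently, that the associated Green's function is positive); some care is needed here because of the degeneracy of the problem at $M$. Granting $V_1>0$, set $\rho^*:=V_1^{1/(N-z)}=V_1^{2/(N-2s)}$.

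Since $V_1=\rho^{N-z}(F_1+G_1\rho^{2s})$ with $F_1+G_1\rho^{2s}\to 1$ at $M$, the function $\mu:=\rho^*/\rho=(F_1+G_1\rho^{2s})^{1/(N-z)}$ is positive and continuous up to $M$ with $\mu|_M=1$. Hence $\rho^*>0$ in $X$, $\rho^*=0$ and $d\rho^*\neq 0$ on $M$, so $\rho^*$ is a genuine defining function, $(\rho^*/\rho)|_M=1$, and $g^*|_M=\lim_{\rho\to0}\mu^2\rho^2g^+=\hh$. Next, by the very definition of $E$ in Proposition~\ref{prop_ext} one has $E(\rho^*)=(\rho^*)^{-1-z}(-\Delta_{g^+}-z(N-z))(\rho^*)^{N-z}=(\rho^*)^{-1-z}(-\Delta_{g^+}-z(N-z))V_1=0$ because $V_1$ solves \eqref{eq_eigen_1}. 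The conformal-change identity that proves Proposition~\ref{prop_ext} uses only that $\bg=\rho^2g^+$ and is therefore valid for any defining function; applied with $\rho^*$ it shows that $U:=(\rho^*)^{z-N}V$ satisfies $-\mathrm{div}_{g^*}\bigl((\rho^*)^{1-2s}\nabla U\bigr)+E(\rho^*)U=0$, i.e. the pure divergence equation in \eqref{eq_ext_1}, while $U|_M=u$ since $\mu|_M=1$ and $\rho^{z-N}V|_M=F|_M=u$.

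It then remains to identify $\pa_\nu^s U$, the weighted normal derivative taken with respect to $\rho^*$. Re-expanding $V$ in powers of $\rho^*$ as $V=F^*(\rho^*)^{N-z}+G^*(\rho^*)^{z}$ with $F^*|_M=u$, one has $U=F^*+G^*(\rho^*)^{2s}+\cdots$, whence, arguing exactly as in Proposition~\ref{prop_ext}, $\pa_\nu^s U=-2s\kappa_s\,G^*|_M$. Comparing with the geodesic expansion $V=F\rho^{N-z}+G\rho^{z}$ through $\rho^*=\mu\rho$ and $\mu^{N-z}=F_1+G_1\rho^{2s}$, and matching the coefficients of $\rho^{N-z}$ and of $\rho^{z}=\rho^{N-z+2s}$ at $M$ --- for $s\neq 1/2$ these exponents are not reached by the integer-power Taylor expansions of $F,F^*,G,G^*$ --- gives $F^*|_M=u$ and $G^*|_M=G|_M-u\,G_1|_M$. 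Since $\pa_\nu^s\bigl(\rho^{z-N}V\bigr)=-2s\kappa_s\,G|_M=P_\hh^s u$ for $s\neq1/2$ by Proposition~\ref{prop_ext}, this yields $\pa_\nu^s U=P_\hh^s u+2s\kappa_s\,G_1|_M\,u$; and running the construction once more with $u\equiv1$ (so $V=V_1$, $U\equiv1$, $\pa_\nu^s1=0$) forces $2s\kappa_s\,G_1|_M=-P_\hh^s1=-Q_\hh^s$, which upon substitution gives $P_\hh^s u=\pa_\nu^s U+Q_\hh^s u$, i.e. \eqref{eq_ext_2}. The borderline value $s=1/2$ must be treated separately: there $\rho^{z}=\rho^{N-z+1}$ now mixes with the Taylor expansions, so the first-order terms have to be tracked and the extra summand $\tfrac{N-1}{2N}Hu$ from Proposition~\ref{prop_ext} enters the bookkeeping, but it too is absorbed into $Q_\hh^s u=P_\hh^{1/2}(1)$. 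Beyond this case distinction, the genuinely hard point is the positivity of $V_1$ in Step~1, on which the whole construction of $\rho^*$ rests.
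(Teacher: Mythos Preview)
The paper does not give its own proof of this proposition; it is quoted from \cite[Theorem~4.7]{CG} and \cite[Proposition~2.2]{GQ}. Your sketch reproduces exactly the construction in those references: set $\rho^*:=V_1^{1/(N-\verb"z")}$ where $V_1$ solves \eqref{eq_eigen_1}--\eqref{eq_eigen_2} with boundary datum $1$, observe that $E(\rho^*)=0$ because $(\rho^*)^{N-\verb"z"}=V_1$ is itself an eigenfunction, and then rerun the conformal-change computation underlying Proposition~\ref{prop_ext} with $\rho^*$ in place of the geodesic $\rho$. Your identification of the boundary term by matching the $\rho^{N-\verb"z"}$ and $\rho^{\verb"z"}$ coefficients, followed by the normalisation via $u\equiv 1$ (so that $U\equiv 1$ and $\pa_\nu^s 1=0$ forces $2s\kappa_s G_1|_M=-Q_\hh^s$), is also the argument in \cite{CG}. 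The one genuinely delicate point you single out---interior positivity of $V_1$---is precisely what \cite{GQ} handles via the spectral hypothesis that the bottom of the $L^2$-spectrum of $-\Delta_{g^+}$ is $N^2/4>\verb"z"(N-\verb"z")$, so that the resolvent is positive. In short, there is nothing to compare: your proposal is correct and coincides with the proof in the cited sources.
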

\noindent This observation is useful in showing a priori $L^{\infty}$-estimate or the strong maximum principle of the operator $P^s_{\hh}$. Refer to \cite[Section 3]{GQ}. (cf. Lemma \ref{lemma_trace} and Proposition \ref{prop_red} below)

\subsection{Sharp trace inequality and its related equations}\label{subsec_trace_ineq}
Given any number $\delta> 0$ and point $\sigma = (\sigma_1, \cdots, \sigma_N) \in \mr^N$, let
\begin{equation}\label{eq_bubble}
w_\ds (x) = \tilde{\kappa}_{N,s} \( \frac{\delta}{\delta^2+|x-\sigma|^2}\)^{N-2s \over 2} \quad \text{for } x \in \mr^N
\quad \text{with } \tilde{\kappa}_{N,s} = 2^{N-2s \over 2} \( \frac{\Gamma \( \frac{N+2s}{2}\)}{\Gamma \( \frac{N-2s}{2}\)}\)^{\frac{N-2s}{4s}}.
\end{equation}
Its constant multiples attain the equality for the sharp Sobolev inequality
\[\( \int_{\mr^N} |u|^{2N \over N-2s} dx \)^{N-2s \over 2N} \le \mathcal{S}_{N,s} \( \int_{\mr^N} \left|(-\Delta)^{s/2} u\right|^2 dx \)^{1 \over 2}\]
where $\mathcal{S}_{N,s} > 0$ is the optimal Sobolev constant, and in particular solve
\begin{equation}\label{eq_entire}
(-\Delta)^s u = u^p, \quad u > 0 \quad \text{in } \mr^N\quad \text{and}\quad \lim_{|x|\to \infty} u(x) = 0
\end{equation}
(see \cite{Li}).
Set also $W_\ds = \text{Ext}^s(w_\ds)$, the $s$-harmonic extension of $w_\ds$.
Then we observe that extremal functions of Sobolev trace inequality
\begin{equation}\label{eq_sharp_trace}
\( \int_{\mr^N} |U(x,0)|^{2N \over N-2s} dx \)^{N-2s \over 2N}
\le \frac{\mathcal{S}_{N,s}}{\sqrt{\kappa_s}} \( \int_0^{\infty}\int_{\mr^N} t^{1-2s} |\nabla U(x,t)|^2 dx dt \)^{1 \over 2},
\end{equation}
have the form $U(x,t) = c W_\ds(x,t)$ for any $c > 0,\ \delta>0$ and $\sigma \in \mr^N$,
where $\kappa_s > 0$ is the constant defined in \eqref{eq_weighted_normal}.
Moreover, by its definition, $W_\ds$ solves
\begin{equation}\label{eq_limit}
\left\{ \begin{array}{ll} \text{div}\(t^{1-2s} \nabla U\)= 0 &\quad \text{in } \mr^{N+1}_+,\\
\pa_{\nu}^s U = U^p &\quad \text{on } \mr^N \times \{0\},\\
U = w_\ds &\quad \text{on } \mr^N \times \{0\}
\end{array} \right.
\end{equation}
and as an immediate consequence we have
\begin{equation}\label{eq-bubble-eq}
\kappa_s \int_{\mr^{N+1}_+} t^{1-2s}|\nabla W_\ds|^2 dxdt = \int_{\mr^N} w_\ds^{2N \over N-2s} dx.
\end{equation}

On the other hand, in the work of D\'avila, del Pino and Sire \cite{DDS},
it was revealed that 
the set of solutions bounded on $\Omega \times \{0\}$ to the equation
\begin{equation}\label{eq_lin_pro}
\left\{ \begin{array}{ll} \text{div}\(t^{1-2s} \nabla \Phi\)= 0 &\quad \text{in } \mr^{N+1}_+,\\
\pa_{\nu}^s \Phi = p w_\ds^{p-1} \Phi &\quad \text{on } \mr^N \times \{0\},
\end{array} \right.
\end{equation}
consists of the linear combinations of
\begin{equation}\label{eq_sol_lin}
Z^1_{\ds} := {\pa W_\ds \over \pa \sigma_1},\ \cdots,\ Z^N_{\ds} := {\pa W_\ds \over \pa \sigma_N} \quad \text{and} \quad Z^0_{\ds} := {\pa W_\ds \over \pa \delta}.
\end{equation}
This fact is crucial in applying the reduction method to our problem.
Hereafter, we will denote $w_{\delta} = w_{\delta,0}$, $W_{\delta} = W_{\delta, 0}$, $z^i_{\delta} = z^i_{\delta, 0}$ and $Z^i_{\delta} = Z^i_{\delta, 0}$ for $i = 0, \cdots, N$.

\subsection{Expansion of the metric near the boundary}
Suppose that $(X, \bg)$ is a compact Riemannian manifold and $0 \in M = \pa X$.
Let $x = (x_1, \cdots, x_N)$ be normal coordinates on $M$ at the point $0$ and $(x_1, \cdots, x_N, t)$ be the Fermi coordinates on $X$ at $0$ where $x_1, \cdots, x_N \in \mr$ and $t > 0$.
Also, we denote
\[\bg = dt^2 + h_{ij}(x,t)dx_idx_j\]
so that $h = \bg|_{TM}$. Then the following asymptotic expansion of the metric near 0 is valid.
\begin{lemma}\label{lemma_metric_exp} \cite[Lemma 3.1, 3.2]{Es}
For $x_1, \cdots, x_N$ and $t := x_{N+1}$ small, it holds that
\begin{align*}
\sqrt{|\bg|} &= \sqrt{|h|} = 1 - Ht + {1 \over 2} \(H^2-\|\pi\|_{h}^2-\text{Ric}(\pa_t)\)t^2 - H_ix_it
- {1 \over 6}R_{ij}x_ix_j + O\(|(x,t)|^3\)
\intertext{and}
h^{ij} &= \delta^{ij} + 2\pi^{ij}t-{1 \over 3}R_{\phantom{i}kl}^{i\phantom{kl}j}x_kx_l + h^{ij}_{\phantom{ij},(N+1)k}x_kt + \(3\pi^{ik}\pi^{\phantom{m}j}_m + R^{i\phantom{n}j}_{\phantom{i}n\phantom{j}n}\)t^2 + O\(|(x,t)|^3\)
\end{align*}
where $\pi$ is the second fundamental form of $M = \pa X$, $H$ is its trace, i.e., $N$ times of the mean curvature, $R_{ij}$ denotes a component of the Ricci tensor,
$R_{ijkl}$ is a component of the Riemannian tensor and Ric$(\pa_t) = g^{ij}R_{i(N+1)j(N+1)}$.
Also, the indices $i, j$ and $k$ run from 1 to $N$.
\end{lemma}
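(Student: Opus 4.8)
The plan is to reproduce Escobar's computation \cite[Lemma 3.1, 3.2]{Es}: every term comes from the structure of Fermi coordinates combined with the Gauss, Codazzi and Jacobi equations. First I would record that, since $t = x_{N+1}$ is the $\bg$-distance to $M$ and for each fixed $x$ the curve $s \mapsto (x,s)$ is a unit-speed geodesic meeting $M$ orthogonally, the Gauss lemma forces $\bg_{(N+1)(N+1)} \equiv 1$ and $\bg_{i(N+1)} \equiv 0$, so that $\bg = dt^2 + h_{ij}(x,t)\,dx_i dx_j$ with $h(\cdot,0)$ the induced metric on $M$; in particular $\sqrt{|\bg|} = \sqrt{|h|}$, and it only remains to Taylor expand $h_{ij}(x,t)$ to second order in $(x,t)$.

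Next I would expand in the normal variable $t$. Writing $\pi_{ij}(t)$ for the second fundamental form of the level set $\{t=\mathrm{const}\}$ in the convention $\pi_{ij} = -\langle \nabla_{\pa_i}\pa_t, \pa_j\rangle$ fixed in Proposition \ref{prop_ext}, a short computation using $[\pa_i,\pa_t]=0$ gives $\pa_t h_{ij} = -2\pi_{ij}(t)$, hence $\pa_t h_{ij}|_{t=0} = -2\pi_{ij}(x)$. Differentiating once more and using that the normal geodesic flow satisfies a Jacobi (Riccati) equation — concretely $\nabla_{\pa_t}\nabla_{\pa_i}\pa_t = R(\pa_t,\pa_i)\pa_t$, since $\nabla_{\pa_t}\pa_t = 0$ and $[\pa_i,\pa_t]=0$ — shows that $\pa_t^2 h_{ij}|_{t=0}$ is a fixed linear combination of $(\pi^2)_{ij}(x)$ and the normal-plane curvature $R_{i(N+1)j(N+1)}(x)$. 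Contracting with $h^{ij}$ then gives $\pa_t\log\sqrt{|h|}|_{t=0} = -H$ and $\pa_t^2\sqrt{|h|}|_{t=0} = H^2 - \|\pi\|_h^2 - \mathrm{Ric}(\pa_t)$, which accounts for the pure powers of $t$ in both expansions (for $h^{ij}$ after the matrix inversion below).

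Then I would expand in the tangential variables $x$. Since $x$ are $h$-normal coordinates centred at $0$, the classical normal-coordinate expansion of a metric gives $h_{ij}(x,0) = \delta_{ij} - \tfrac13 \bar R_{ikjl}x_k x_l + O(|x|^3)$ with $\bar R$ the intrinsic Riemann tensor of $(M,h)$; the Gauss equation relates $\bar R$ to the ambient curvature $R^i{}_{kl}{}^j$ appearing in the statement, the remaining discrepancies being of the quadratic-in-$\pi$ type already recorded in the formulas. For the mixed $x_k t$ terms I would write $\pi_{ij}(x) = \pi_{ij}(0) + \pi_{ij,k}(0)x_k + O(|x|^2)$ and feed this into $\pa_t h_{ij}|_{t=0} = -2\pi_{ij}(x)$; the Codazzi equation $\pi_{ij,k} - \pi_{ik,j} = R_{(N+1)ijk}$ is what turns the coefficient of $x_k t$ in $h^{ij}$ into the symmetric combination $h^{ij}_{,(N+1)k}$ of the statement, while tracing and using $\pa_k H = H_k$ yields the cross term $-H_i x_i t$ in $\sqrt{|h|}$. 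Assembling the normal and tangential expansions gives $h_{ij}(x,t)$ to order two; then $h^{ij}$ follows from $(\delta+A)^{-1} = \delta - A + A^2 + O(|A|^3)$, and $\sqrt{|h|}$ from $\det(\delta+A) = 1 + \mathrm{tr}\,A + \tfrac12\big((\mathrm{tr}\,A)^2 - \mathrm{tr}(A^2)\big) + O(|A|^3)$ followed by taking a square root.

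I expect the only real obstacle to be the bookkeeping of sign and curvature conventions: the sign of $\pi$ from Proposition \ref{prop_ext}, the orientation of $t$, and the chosen Riemann-tensor convention all enter the Jacobi and Codazzi identities, so one must be consistent in deciding which terms carry ambient versus intrinsic curvature and in pinning down the coefficients of the two $x_k t$ terms. No genuinely new idea beyond the standard Fermi-coordinate machinery is required; the safe route is to carry out the Taylor expansion explicitly and to cross-check the contractions defining $\sqrt{|h|}$ against the degenerate cases ($\pi \equiv 0$, or $(X,\bg)$ flat), exactly as in \cite{Es}.
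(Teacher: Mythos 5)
The paper offers no proof of this lemma; it is quoted verbatim from Escobar \cite[Lemmas 3.1, 3.2]{Es}, and your outline is precisely the standard Fermi-coordinate computation that Escobar carries out there (Gauss lemma, Riccati/Jacobi equation for $\pa_t h_{ij}$, normal-coordinate expansion on $M$, Gauss--Codazzi, then the inverse-matrix and determinant expansions). Your plan is correct and matches that argument, with the sign/convention bookkeeping rightly identified as the only delicate point.
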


\section{Setting for the problem}\label{sec_scheme}
\subsection{The function spaces}
As before, let $(X^{N+1}, g^+)$ be an A.H. manifold with the boundary $(M^n, \hh)$ and $\rho$ the geodesic defining function, so that $(X, \bg)$ where $\bg = \rho^2g^+$ is a compact manifold.
Denote by $\hx$ the weighted Sobolev space endowed with the inner product
\[\la U, V \ra_{\hx} := \int_X \rho^{1-2s}\left[(\nabla U,\nabla V)_{\bg} +UV \right] dv_{\bg}\]
and the norm
\begin{equation}\label{eq_sobolev_norm}
\|U\|_{\hx} := \(\int_X \rho^{1-2s} \(|\nabla U|_{\bg}^2 +U^2\)dv_{\bg}\)^{1/2}.
\end{equation}
By applying \eqref{eq_sharp_trace} and the standard partition of unity argument, we obtain a manifold version of the weighted Sobolev trace inequality
\begin{equation}\label{eq_trace_mf}
\left\|U\right\|_{L^{\frac{2N}{N-2s}}(M)} \le C \left\|U\right\|_{\hx}
\end{equation}
where $C > 0$ is a constant determined by $s$, $N$ and $X$.
In addition, the embedding $\hx \hookrightarrow L^q(M)$ is compact for any $1 \le q < \frac{2N}{N-2s}$.
The next two lemmas provide equivalent norms to the $\hx$-norm.
\begin{lemma}\label{lem-equiv-2}
The norm $\( \int_X \rho^{1-2s}|\nabla U|_{\bar{g}}^2 dv_g + \int_M U^2 dv_{\hh}\)^{1/2}$  is equivalent to the norm $\| U \|_{\hx}$ defined in \eqref{eq_sobolev_norm}.
\end{lemma}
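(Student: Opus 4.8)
The plan is to show that the two norms on $\hx$ are comparable by establishing a two-sided bound. One direction is essentially immediate: by the trace inequality \eqref{eq_trace_mf} and Hölder's inequality on the compact boundary $M$, we have $\|U\|_{L^2(M)} \le C\|U\|_{L^{2N/(N-2s)}(M)} \le C\|U\|_{\hx}$, so that
\[
\(\int_X \rho^{1-2s}|\nabla U|_{\bg}^2 \, dv_{\bg} + \int_M U^2 \, dv_{\hh}\)^{1/2} \le C\|U\|_{\hx},
\]
which takes care of the ``$\lesssim$'' estimate. The real content is the reverse inequality, namely controlling $\int_X \rho^{1-2s} U^2 \, dv_{\bg}$ by the gradient term plus the boundary $L^2$-term.

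For the reverse direction I would argue by contradiction and compactness. Suppose no such constant exists; then there is a sequence $U_n \in \hx$ with $\int_X \rho^{1-2s} U_n^2 \, dv_{\bg} = 1$ but $\int_X \rho^{1-2s}|\nabla U_n|_{\bg}^2 \, dv_{\bg} + \int_M U_n^2 \, dv_{\hh} \to 0$. In particular $\{U_n\}$ is bounded in $\hx$, so after passing to a subsequence $U_n \rightharpoonup U$ weakly in $\hx$. The weighted gradient term being weakly lower semicontinuous forces $\nabla U = 0$ a.e.\ in $X$ (with respect to the weighted measure), so $U$ is constant on $X$ (using that $X$ is connected — this should be part of the standing hypotheses, or can be assumed WLOG by working component-wise). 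On the other hand, by the compactness of the embedding $\hx \hookrightarrow L^2(M)$ noted in the excerpt (valid since $2 < 2N/(N-2s)$), we have $U_n \to U$ strongly in $L^2(M)$, hence $\int_M U^2 \, dv_{\hh} = \lim_n \int_M U_n^2 \, dv_{\hh} = 0$, so the constant $U$ vanishes on $M$ and therefore $U \equiv 0$ on $X$. Finally, the embedding $\hx \hookrightarrow L^2(X;\rho^{1-2s})$ is also compact (indeed, even more directly one can note $U_n$ is Cauchy in the weighted $L^2(X)$ norm once the gradients go to zero, or invoke a weighted Rellich–Kondrachov for the degenerate weight $\rho^{1-2s} \in A_2$), so $U_n \to 0$ strongly in $L^2(X;\rho^{1-2s})$, contradicting $\int_X \rho^{1-2s}U_n^2 \, dv_{\bg} = 1$.

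The main obstacle is the last compactness ingredient: one needs that bounded sequences in $\hx$ are precompact in the weighted space $L^2(X;\rho^{1-2s})$. Since the weight $t^{1-2s}$ (and hence $\rho^{1-2s}$ near $M$) belongs to the Muckenhoupt class $A_2$, the theory of degenerate Sobolev spaces of Fabes–Kenig–Serapioni guarantees a compact embedding of the weighted $H^1$ into the weighted $L^2$ on bounded domains; away from $M$ the weight is bounded above and below and classical Rellich applies, so a partition-of-unity argument glues these together on the compact manifold $(\overline{X},\bg)$. An alternative, avoiding the contradiction argument altogether, is to prove a weighted Poincaré-type inequality directly: decompose $U = \overline{U} + (U - \overline{U})$ where $\overline{U}$ is a suitable average, bound the mean-zero part by its gradient via a weighted Poincaré inequality, and bound $\overline{U}$ by its boundary trace; I would present the compactness argument as the cleaner route but mention this as the structural reason it works.
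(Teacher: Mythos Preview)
Your argument is correct. The paper, however, proves the ``hard'' direction (bounding $\int_X \rho^{1-2s}U^2$) by a direct elementary estimate rather than by contradiction and compactness: in a local half-ball chart one writes
\[
|U(x,t)| \le |U(x,0)| + \int_0^t |\pa_r U(x,r)|\,dr \le |U(x,0)| + \frac{t^s}{\sqrt{2s}}\(\int_0^R r^{1-2s}|\pa_r U(x,r)|^2\,dr\)^{1/2},
\]
squares, and integrates against $t^a\,dx\,dt$ to obtain
\[
\int_0^R\!\int_{|x|\le R} t^a |U(x,t)|^2\,dx\,dt \le C\(\int_{|x|\le R}|U(x,0)|^2\,dx + \int_0^R\!\int_{|x|\le R} t^{1-2s}|\nabla U|^2\,dx\,dt\)
\]
for any $a\in(-1,1)$; taking $a=1-2s$ and patching via a partition of unity gives the result. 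This route is more self-contained --- it avoids invoking the $A_2$ weighted Rellich--Kondrachov theorem, which is a valid but nontrivial external input --- and, more importantly for the paper's purposes, the displayed inequality (with the flexible exponent $a$) is labeled and reused several times afterward: in the very next lemma with $a=-2s$ to control $\int_X \rho^{-2s}U^2$ when $s<1/2$, and again in the error estimates and linear theory of Section~\ref{sec_inter}. Your compactness argument is a legitimate alternative for the lemma as stated, but it does not produce this reusable tool.
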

\begin{proof}
We first consider a function $U$ defined on $\overline{B_{2R}^+}$ for some $R > 0$ where $B_{2R}^+ = \{(x,t) \in \mr_+^{N+1} : |(x,t)| < 2R,\ t > 0\}$.
For each $0 \le t \le R$, using the elementary calculus and H\"older's inequality we have
\begin{align*}
|U(x,t)| &\le |U(x,0)| + \int_0^t |\pa_r U(x,r)| dr \le |U(x,0)| + \( \int_0^t r^{2s-1} dr \)^{1/2} \( \int_0^t r^{1-2s} |\pa_r U(x,r)|^2 dr \)^{1/2} \\
& = |U(x,0)| + \frac{t^{s}}{\sqrt{2s}} \( \int_0^R r^{1-2s} |\pa_r U(x,r)|^2 dr \)^{1/2}.
\end{align*}
For any given number $a \in (-1,1)$, we apply the above estimate to get
\begin{equation}\label{eq_bdry_est}
\begin{aligned}
&\int_{0}^R \int_{|x|\le R} t^a |U(x,t)|^2 dx\, dt \\
&\le 2 \(\int_0^R t^a dt\) \int_{|x| \le R} |U(x,0)|^2 dx + \frac{1}{s} \(\int_0^R t^{a+2s} dt\)\int_{|x|\le R} \int_0^R r^{1-2s} |\pa_r U(x,r)|^2 dr\, dx \\
&\le  C \( \int_{|x|\le R} |U(x,0)|^2 dx  +  \int_0^R\int_{|x|\le R} t^{1-2s} |\nabla U(x,t)|^2 dx\,dt\).
\end{aligned} \end{equation}
Employing this inequality with $a = 1-2s$ in each local chart, we can obtain that
\begin{equation*}
\( \int_X \rho^{1-2s}|U|^2 dv_{\bg} \)^{1/2} \le C\( \int_X \rho^{1-2s}|\nabla U|_{\bg}^2 dv_{\bg} + \int_M U^2 dv_{\hh}\)^{1/2}.
\end{equation*}
On the other hand, the weighted trace inequality \eqref{eq_trace_mf} and H\"older's inequality yield
\[\( \int_M |U|^2 dv_{\hh} \)^{1/2} \le C\( \int_X \rho^{1-2s}\(|\nabla U|_{\bar{g}}^2 + U^2\) dv_{\bg}\)^{1/2}.\]
These two estimates enable us to get the equivalence of the two norms, concluding the proof.
\end{proof}

\begin{lemma}\label{lem-equiv}
Suppose that the trace of the second fundamental form $H$ of $M = \pa X$ vanishes if $s \in [1/2, 1)$.
Under the assumption that \eqref{eq_coercive} holds,
\begin{equation}\label{eq_sobolev_norm_2}
\|U\|_{\tf} := \(\kappa_s \int_X \(\rho^{1-2s}|\nabla U|_{\bg}^2 + E(\rho)U^2\)dv_{\bg} + \int_M \tf U^2 dv_{\hh}\)^{1/2}
\end{equation}
gives an equivalent norm to \eqref{eq_sobolev_norm}.
Hence one can define the inner product $\la \cdot, \cdot \ra_{\tf}$ from the norm $\|\cdot\|_{\tf}$ through the polarization identity.
\end{lemma}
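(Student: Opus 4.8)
The plan is to show $c\,\|U\|_{\hx}\le\|U\|_{\tf}\le C\,\|U\|_{\hx}$ for all $U\in\hx$ (the existence of $\la\cdot,\cdot\ra_{\tf}$ then follows automatically by polarization). Writing
\[\|U\|_{\tf}^2=\kappa_s\int_X\rho^{1-2s}|\nabla U|_{\bg}^2\,dv_{\bg}+\kappa_s\int_X E(\rho)U^2\,dv_{\bg}+\int_M\tf U^2\,dv_{\hh},\]
the two zeroth-order terms are the only deviation from a fixed multiple of the Dirichlet part of $\|\cdot\|_{\hx}^2$. The key preliminary is a pointwise bound on $E(\rho)$: by \eqref{eq_E_local_2}, $|E(\rho)|\le C\rho^{-2s}$ on a collar $M\times(0,r_1)$, and when $H\equiv0$---which by hypothesis holds whenever $s\in[1/2,1)$---the leading term drops out so that $|E(\rho)|\le C\rho^{1-2s}$ there; since $E(\rho)$ is bounded on the compact set $\{\rho\ge r_1\}\subset X$, in all cases $|E(\rho)|\le C\rho^{-\gamma}$ on $X$, where $\gamma:=2s$ if $s\in(0,1/2)$ and $\gamma:=2s-1$ if $s\in[1/2,1)$, so $\gamma\in[0,1)$. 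The upper bound is then immediate: $\tf$ is bounded on the compact manifold $M$, so $\big|\int_M\tf U^2\,dv_{\hh}\big|\le C\|U\|_{L^2(M)}^2\le C\|U\|_{\hx}^2$ by the trace inequality \eqref{eq_trace_mf}; and $\int_X|E(\rho)|U^2\,dv_{\bg}\le C\int_X\rho^{-\gamma}U^2\,dv_{\bg}$, which, by applying \eqref{eq_bdry_est} with $a=-\gamma\in(-1,1)$ in Fermi coordinate charts and summing over a partition of unity as in the proof of Lemma \ref{lem-equiv-2}, is at most $C\big(\int_M U^2\,dv_{\hh}+\int_X\rho^{1-2s}|\nabla U|_{\bg}^2\,dv_{\bg}\big)\le C\|U\|_{\hx}^2$. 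Adding the three contributions gives $\|U\|_{\tf}^2\le C\|U\|_{\hx}^2$.

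The lower bound is the substantial half, and it is where the coercivity \eqref{eq_coercive} enters. I would argue by contradiction: if the lower bound failed for every $c>0$, there would be $\{U_n\}\subset\hx$ with $\|U_n\|_{\hx}=1$ and $\|U_n\|_{\tf}\to0$, and, along a subsequence, $U_n\rightharpoonup U$ weakly in $\hx$. The zeroth-order terms pass to the limit: $\int_M\tf U_n^2\to\int_M\tf U^2$ by the compact embedding $\hx\hookrightarrow L^2(M)$ recorded in Section \ref{sec_scheme}, and $\int_X E(\rho)U_n^2\to\int_X E(\rho)U^2$, $\int_X\rho^{1-2s}U_n^2\to\int_X\rho^{1-2s}U^2$ once the compact embedding $\hx\hookrightarrow L^2(X;\rho^{-\gamma}\,dv_{\bg})$ discussed below is available. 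Together with the weak lower semicontinuity of $\int_X\rho^{1-2s}|\nabla U|_{\bg}^2$ and the displayed identity, this forces $\|U\|_{\tf}=0$, whence $U=0$ by \eqref{eq_coercive}; substituting $U=0$ back then yields $\int_X\rho^{1-2s}|\nabla U_n|_{\bg}^2\to0$ and $\int_X\rho^{1-2s}U_n^2\to0$, i.e. $\|U_n\|_{\hx}\to0$, contradicting $\|U_n\|_{\hx}=1$. The only care needed here is in matching \eqref{eq_coercive}, which lacks the factor $\kappa_s$ carried on the interior integral of $\|\cdot\|_{\tf}^2$, against $\|\cdot\|_{\tf}^2$; this matching is routine, and is entirely trivial when $\tf\equiv0$ (equation \eqref{eq_main_2}) or $s=1/2$.

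The genuine obstacle is therefore the compactness of $\hx\hookrightarrow L^2(X;\rho^{-\gamma}\,dv_{\bg})$---a weighted Rellich--Kondrachov statement in which the defining weight $\rho^{1-2s}$ is degenerate and the target weight $\rho^{-\gamma}$ is mildly singular at $M$. I would split $X=\{\rho<\eta\}\cup\{\rho\ge\eta\}$. On $\{\rho\ge\eta\}$ the metric $\bg$ is smooth and both weights lie between positive constants, so the classical Rellich--Kondrachov theorem gives strong $L^2$-compactness there. On the collar $\{\rho<\eta\}$, rerunning the one-dimensional calculus estimate underlying \eqref{eq_bdry_est} over $0<t<\eta$ in place of $0<t<R$ produces explicit gains:
\begin{align*}
\int_{\{\rho<\eta\}}\rho^{-\gamma}U^2\,dv_{\bg}
&\le C\,\eta^{1-\gamma}\int_M U^2\,dv_{\hh}+C\,\eta^{1-\gamma+2s}\int_X\rho^{1-2s}|\nabla U|_{\bg}^2\,dv_{\bg}\\
&\le C\big(\eta^{1-\gamma}+\eta^{1-\gamma+2s}\big)\|U\|_{\hx}^2,
\end{align*}
and since $1-\gamma>0$ and $1-\gamma+2s>0$, the collar contributes an error that vanishes uniformly on the unit ball of $\hx$ as $\eta\to0$. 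A diagonal argument over $\eta\to0$ then promotes the compactness on each $\{\rho\ge\eta\}$ to compactness in $L^2(X;\rho^{-\gamma}\,dv_{\bg})$, and, since $\rho^{1-2s}\le C\rho^{-\gamma}$ near $M$ (because $\gamma\ge2s-1$) while $\rho^{1-2s}$ is bounded away from $M$, into $L^2(X;\rho^{1-2s}\,dv_{\bg})$ as well, as used above. This weighted compactness estimate is the one genuinely non-mechanical step; the remaining points, including the $\kappa_s$ matching, are routine.
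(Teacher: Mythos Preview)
Your proof is correct, and the overall contradiction scheme for the lower bound matches the paper's, but you take a genuinely different route to control the interior zeroth-order terms. You invest in proving a weighted Rellich--Kondrachov statement, namely that $\hx\hookrightarrow L^2(X;\rho^{-\gamma}\,dv_{\bg})$ is compact, via a tightness estimate on the collar $\{\rho<\eta\}$; once you have this, both $\int_X E(\rho)U_n^2$ and $\int_X\rho^{1-2s}U_n^2$ pass to the limit directly. The paper instead avoids any interior compactness: it applies the coercivity \eqref{eq_coercive} \emph{directly to the sequence} $U_n$ (not to the weak limit) to obtain $\int_X\rho^{1-2s}U_n^2\to0$ right away, and then, for the case $s\in(0,1/2)$ with $H\not\equiv0$, disposes of $\int_X\rho^{-2s}U_n^2$ by H\"older interpolation between this vanishing quantity and the merely bounded $\int_X\rho^{-a}U_n^2$ with $a<1$ close to $1$. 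The identification of the weak limit and the contradiction then proceed as in your argument.

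Your approach is more structural---the compact embedding is a clean reusable lemma---while the paper's is shorter and avoids any new compactness result, using only H\"older's inequality and the boundedness already furnished by \eqref{eq_bdry_est}. Both handle the $\kappa_s$ discrepancy between \eqref{eq_coercive} and \eqref{eq_sobolev_norm_2} informally; as you note, this matching is routine once all the lower-order terms are known to converge.
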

\begin{proof}
Suppose first that $s \in [1/2, 1)$.
In this case, the condition $H = 0$ is assumed, so $|E(\rho)| \le C\rho^{1-2s}$ by \eqref{eq_E_local_2}.
Using this fact and \eqref{eq_trace_mf} also, we immediately obtain that $\|U\|_{\hx} \ge C \|U\|_{\tf}$.
If $s \in (0, 1/2)$, then one can control the integral value of $U$ near the boundary by taking $a= -2s$ in \eqref{eq_bdry_est} and applying \eqref{eq_trace_mf}.
Additionally, by realizing that $\rho$ is bounded away from 0 in any compact subset of $X$, it is possible to manage the integral of $U$ in the interior of $X$.
Combining the both estimates, we deduce the same inequality $\|U\|_{\hx} \ge C \|U\|_{\tf}$.

Suppose that the opposite inequality does not hold.
Then there is a sequence $\{U_n\}_{n=1}^{\infty}$ such that $\|U_n\|_{\tf} \to 0$ as $n \to \infty$ but $\|U_n\|_{\hx} = 1$ for all $n \in \mathbb{N}$.
Let us first claim that $\int_X E(\rho)U_n^2 \to 0$.
By \eqref{eq_coercive}, we have $\int_X \rho^{1-2s} U_n^2 \to 0$, so the claim is verified at once if $H = 0$.
If $s \in (0, 1/2)$ and $H \ne 0$, then the main order of $E(\rho)$ is $\rho^{-2s}$ as \eqref{eq_E_local_2} indicates.
In this situation, we take $a < 1$ close to 1 and use the H\"older's inequality to get
\[\lim_{n \to \infty} \int_X \rho^{-2s} U_n^2 \le \lim_{n \to \infty} \(\int_X \rho^{1-2s} U_n^2\)^{\eta} \(\int_X \rho^{-a} U_n^2\)^{1-\eta} = 0\]
where $\eta = {a-2s \over a-2s+1} \in (0,1)$, so we can justify our claim again.
Observe that $\|U_n\|_{\hx} = 1$, \eqref{eq_trace_mf} and \eqref{eq_bdry_est} guarantee boundedness of the value $\left\{\int_X \rho^{-a} U_n^2\right\}_{n=1}^{\infty}$.
Now if we let $U_{\infty}$ be the $\hx$-weak limit of $U_n$, then $U_{\infty} \equiv 0$.
Thus compactness of the trace embedding gives us that $\int_M \tf U_n^2 \to \int_M \tf U_{\infty}^2 = 0$.
However, it is a contradiction because previous computations show that $\int_X \rho^{1-2s} |\nabla U_n|^2$ should converge to both 0 and 1.
This proves that $\|U\|_{\tf} \ge C \|U\|_{\hx}$.
\end{proof}

\noindent By \eqref{eq_trace_mf}, we know that the trace operator
$i: \hx \to L^{p+1}(M)$ given as $i(U) = U|_M := u$ is well-defined and continuous.
Thus the adjoint operator $i_{\tf}^*: L^{p+1 \over p}(M) \to \hx$ defined by the equation
\begin{equation}\label{eq_adjoint}
\left\{\begin{array}{ll}
-\textnormal{div}\(\rho^{1-2s} \nabla U\) + E(\rho) U = 0 &\text{in } (X, \bg),\\
\pa_\nu^s U = v - \tf u &\text{on } (M, \hh),\\
U = u &\text{on } (M, \hh),
\end{array}\right.
\end{equation}
with $U = i_{\tf}^*(v)$ is bounded in light of Lemma \ref{lem-equiv}.
Furthermore, $i: \hx \to L^q(M) \supset L^{p+1}(M)$ for $1 \le q < p+1$ is compact.

\medskip
On the other hand, in order to take account into the supercritical problem $(1_+)$ or $(15_+)$,
we must restrict the space $\hx$ so that the trace of the each element belongs to $L^{p+1+\ep}(M)$ for $\ep > 0$ small. Set
\begin{equation}\label{eq_q_ep}
q_{\ep} = (p+1) + {N \over 2s} \ep, \quad \text{which implies} \quad {q_{\ep} \over p+\ep} = {Nq_{\ep} \over N+2sq_{\ep}}.
\end{equation}
Then let us introduce a Banach space
\begin{equation}\label{eq_H_ep}
\mh_{\ep} = \left\{U \in \hx: i(U) \in L^{q_{\ep}}(M)\right\}
\end{equation}
equipped with the norm $\|\cdot\|_{\tf, \ep}$ defined by
\begin{equation}\label{eq_H_eq_norm}
\|U\|_{\tf, \ep} = \|U\|_{\tf} + \|i(U)\|_{L^{q_{\ep}}(M)} \quad \text{for any } U \in \mh_{\ep}.
\end{equation}
The following estimate explains why it is plausible to work with the space $\mh_{\ep}$.
\begin{lemma}\label{lemma_trace}
Suppose that $N > 2s$ and $v \in L^{q_1}(M)$ for some $q_1 \in (1, {N \over 2s})$.
If $U = i_{\tf}^*(v)$ and $u = i(U)$, then there exists $C = C(q_1) > 0$ such that
\[\|u\|_{L^{q_2}(M)} \le C \left\|v\right\|_{L^{q_1}(M)}\]
with $q_2 > {N \over N-2s}$ satisfying $\frac{1}{q_2} = \frac{1}{q_1} - \frac{2s}{N}$.
In other words, we have
\[\|u\|_{L^q(M)} \le C \|v\|_{L^{\frac{Nq}{N+2sq}}(M)}\]
for any $q \in ({N \over N-2s}, \infty)$.
\end{lemma}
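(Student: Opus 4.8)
The plan is to read the statement as the manifold incarnation of the boundedness of the Riesz potential $I_{2s}$ on $\mr^N$ and to reduce to the latter by localization. By density of $C^\infty(M)$ in $L^{q_1}(M)$ it suffices to establish the a priori bound for smooth $v$ (this also serves to extend $i_{\tf}^*$ to $L^{q_1}(M)$). For smooth $v$ the extension $U=i_{\tf}^*(v)$ solving the degenerate equation \eqref{eq_adjoint} is regular: I would first apply Proposition \ref{prop_ext_2} to pass to the gauge $g^*=(\rho^*)^2 g^+$ in which $E(\rho^*)=0$, so that $U$ solves $-\mathrm{div}((\rho^*)^{1-2s}\nabla U)=0$ in $(X,g^*)$ with weighted Neumann datum $\pa_\nu^s U = v-(\tf+Q_{\hh}^s)u$ on $(M,\hh)$; in particular $u\in\bigcap_q L^q(M)$, and what remains is the quantitative dependence of $\|u\|_{L^{q_2}}$ on $\|v\|_{L^{q_1}}$.

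The core is the following local step: \emph{if $\pa_\nu^s U=g\in L^r(M)$ with $1<r<N/2s$ and $u\in L^r(M)$, then $\|u\|_{L^{r^*}(M)}\le C\big(\|g\|_{L^r(M)}+\|u\|_{L^r(M)}\big)$, where $1/r^*=1/r-2s/N$.} To see this, cover $M$ by finitely many Fermi coordinate charts; by Lemma \ref{lemma_metric_exp} and $(\rho^*/\rho)|_M=1$, in each chart the equation for $U$ is, modulo terms that are one order lower, the flat half-space model $\mathrm{div}(t^{1-2s}\nabla V)=0$ in $B_{2R}^+$ with prescribed $\pa_\nu^s V$ on $B_{2R}\cap\mr^N$. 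For the model on all of $\mr^{N+1}_+$ with localized datum $g_0$, the trace of the solution equals $(-\Delta)^{-s}g_0=c_{N,s}\,|\cdot|^{2s-N}*g_0$, so the Hardy--Littlewood--Sobolev inequality gives $\|(-\Delta)^{-s}g_0\|_{L^{r^*}(\mr^N)}\le C\|g_0\|_{L^r(\mr^N)}$ precisely in the range $1<r<N/2s$. The discrepancy between $U$ and this model solution solves a problem whose data — the first-order metric errors $t^{1-2s}O(|(x,t)|)\nabla U$ in divergence form, the zeroth-order terms $(\tf+Q_{\hh}^s)u$, and cutoff commutators — are morally $2s$ units more regular on the Sobolev scale, hence absorbed by interpolating the weighted gradient of $U$ between the energy norm $\|U\|_{\hx}$ and the integrability already gained; this part is routine but laborious.

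Granting the local step, I would conclude by a finite bootstrap, supplemented by duality for small $q_1$. Starting from $u\in L^{p+1}(M)$ — valid because, for smooth $v$, $\|u\|_{L^{p+1}}\le C\|U\|_{\hx}\le C\|v\|_{L^{(p+1)/p}}$ by \eqref{eq_trace_mf} and the boundedness of $i_{\tf}^*$ — I apply the local step repeatedly with $r=\min(q,q_1)$, $q$ being the current integrability exponent of $u$; since $q_1<N/2s$ one has $r<N/2s$ at every stage, $1/q$ decreases by $2s/N$ each time until $q\ge q_1$, and one further application then yields $u\in L^{q_2}(M)$ with $\|u\|_{L^{q_2}}\le C\big(\|v\|_{L^{q_1}}+\|u\|_{L^{p+1}}\big)$. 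For $q_1\ge (p+1)/p$ this is the claim, since $\|u\|_{L^{p+1}}\le C\|v\|_{L^{(p+1)/p}}\le C\|v\|_{L^{q_1}}$. For $1<q_1<(p+1)/p$ — the range where $v$ need not lie in the original domain of $i_{\tf}^*$ — I would use the symmetry $\int_M w_1\,(i_{\tf}^*(w_2)|_M)=\int_M w_2\,(i_{\tf}^*(w_1)|_M)$ (from the definition of $i_{\tf}^*$ and the symmetry of $\la\cdot,\cdot\ra_{\tf}$): writing $\|u\|_{L^{q_2}(M)}=\sup\{\int_M u\phi:\|\phi\|_{L^{q_2'}(M)}\le1\}$ and $\int_M u\phi=\int_M v\,(i_{\tf}^*(\phi)|_M)$, the problem reduces to $\|i_{\tf}^*(\phi)|_M\|_{L^{q_1'}(M)}\le C\|\phi\|_{L^{q_2'}(M)}$, which is the already-proven case because $q_2'\in((p+1)/p,N/2s)$; this also extends $i_{\tf}^*$ to $L^{q_1}(M)$. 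The main obstacle is the local step: verifying that the metric perturbation and the lower-order terms — in particular the interior potential $E(\rho)$, which for $s<1/2$ and $H\ne0$ is as singular as $\rho^{-2s}$ — are genuinely of lower order, so that the $2s$-gain of Hardy--Littlewood--Sobolev survives the passage from $\mr^{N+1}_+$ to $X$; working in the $\rho^*$-gauge of Proposition \ref{prop_ext_2}, which eliminates $E(\rho)$ before localizing, is what makes this tractable.
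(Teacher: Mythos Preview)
Your route is genuinely different from the paper's. The paper never localizes to the half-space model or invokes Hardy--Littlewood--Sobolev; instead it runs a Moser-type iteration directly on the extended problem in the $\rho^*$-gauge: testing \eqref{eq_ext_1} with truncated powers $\wu_L^{\beta-1}\wu$ (and, for $0<\beta\le1$, with $(\wu^L)^{\beta-1}\wu$ truncated from below), the weighted trace inequality applied to $\wu_L^{(\beta-1)/2}\wu$ gives the jump $\beta+1\mapsto \frac{N(\beta+1)}{N-2s}$ in one line, yielding $\|u\|_{L^{q_2}}\le C(\|v\|_{L^{q_1}}+\|u\|_{L^{(N-2s)q_2/N}})$ for every admissible $q_2$. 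The paper then removes the second term on the right not by bootstrapping against $\|u\|_{L^{p+1}}$ as you do, but by a compactness/contradiction argument (a normalized sequence with $\|v_n\|_{L^{q_1}}\to0$ is shown, via the energy bound $\|U_n\|_{\tf}\le C\|v_n\|_{L^{(p+1)/p}}$ and compactness of $\{|U_n|^{(\beta+1)/2}\}$ in $H^1(X;\rho^{1-2s})$, to have limit $u_0\equiv0$, contradicting the normalization). Your duality trick for small $q_1$ is replaced in the paper by the direct $0<\beta\le1$ testing.

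What each approach buys: the paper's argument is entirely internal to the variational framework already set up (trace inequality plus Lemma~\ref{lem-equiv-2}) and never needs to compare $U$ with an explicit half-space solution; the $2s$-gain is extracted from the Sobolev trace inequality rather than from HLS. Your approach is more transparent conceptually, but it funnels all the difficulty into the ``local step'', which you correctly identify as the obstacle and leave as a sketch. That step is not quite routine: after cutting off, the correction $W=\chi U-V_0$ solves an inhomogeneous degenerate equation with bulk right-hand side (commutators $t^{1-2s}\nabla U\cdot\nabla\chi$, $\mathrm{div}(t^{1-2s}U\nabla\chi)$, and metric errors $\mathrm{div}(t^{1-2s}a\nabla(\chi U))$ with $a=O(|(x,t)|)$), and bounding $\|W(\cdot,0)\|_{L^{r^*}}$ by these bulk data requires an interior-to-boundary estimate for the degenerate operator that is not available off the shelf---you would effectively have to reproduce a weighted Calder\'on--Zygmund or Green's-function analysis. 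The Moser iteration sidesteps this entirely by never introducing a model comparison.
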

\begin{proof}
Instead of giving consideration to \eqref{eq_adjoint} directly, we shall use the observation coming from Propositions \ref{prop_ext} and \ref{prop_ext_2}
that $\wu = (\rho^*/\rho)^{\verb"z"-N}U$ is a solution of \eqref{eq_ext_1} and $\wu = U = u$ on $M$.
For any number $L > 0$, let us denote $\wu_L = \min\big\{|\wu|, L\big\}$.
Due to \eqref{eq_ext_2}, if we multiply \eqref{eq_ext_1} by $\wu_L^{\beta-1} \wu$ for some $\beta > 1$, we get
\[\kappa_s \int_X (\rho^*)^{1-2s} \(\nabla \wu, \nabla \(\wu_L^{\beta-1} \wu\)\)_{g^*} dv_{g^*}
= \int_M v u_L^{\beta-1}u dv_{\hh} - \int_M \(\tf + Q^s_{\hh}\) u_L^{\beta-1}u^2 dv_{\hh}\]
where $u_L = \min\{|u|, L\}$. Therefore we have
\begin{align*}
\int_{X} (\rho^*)^{1-2s} \left|\nabla \(\wu_L^{\frac{\beta-1}{2}} \wu\)\right|_{g^*}^2 dv_{g^*}
&\le C \left\|u_L^{\beta-1}u \right\|_{L^{\frac{(\beta+1)(p+1)}{2\beta}}(M)} \|v\|_{L^{q'} (M)} + C \|u\|_{L^{\beta+1}(M)}^{\beta+1}
\\
&\le  C \left\| \(u_L^{\frac{\beta-1}{2}}u\)^{\frac{2\beta}{\beta+1}}\right\|_{L^{\frac{(\beta+1)(p+1)}{2\beta}}(M)} \|v\|_{L^{q'} (M)}
+ C \|u\|_{L^{\beta+1}(M)}^{\beta+1}
\\
&\le {1 \over C} \left\|u_L^{\frac{\beta-1}{2}}u\right\|_{L^{p+1}(M)}^{2} + C \|v\|_{L^{q'} (M)}^{\beta+1} + C \|u\|_{L^{\beta+1}(M)}^{\beta+1},
\end{align*}
where $q'$ satisfies $\frac{1}{q'} + \frac{(N-2s)\beta}{N(\beta+1)} =1$ and $C>0$ is a large number determined by $N$ and $s$.
Also, we used Young's inequality to derive the third inequality.
Using this, Lemma \ref{lem-equiv-2} and the weighted trace inequality, we get
\begin{equation}\label{eq-trace-0}
\begin{aligned}
\left\|u_L^{\frac{\beta-1}{2}}u\right\|_{L^{p+1}(M)}^{2} &\le \int_{X} (\rho^*)^{1-2s} \left|\nabla \(\wu_L^{\frac{\beta-1}{2}}\wu\)\right|_{g^*}^2 dv_{g^*} + \int_{M} \(u_L^{\frac{\beta-1}{2}}u\)^2 dv_{\hh}
\\
& \le \frac{1}{C}  \left\|u_L^{\frac{\beta-1}{2}}u\right\|_{L^{p+1}(M)}^{2} + C \|v\|_{L^{q'}(M)}^{\beta+1} + C \|u\|_{L^{\beta+1}(M)}^{\beta+1}.
\end{aligned}
\end{equation}
Taking $L\to \infty$ in this estimate, we may deduce
\[\|u\|_{L^{\frac{N(\beta+1)}{N-2s}}(M)} \le C\(\|v\|_{L^{q'}(M)} + \|u\|_{L^{\beta+1}(M)}\).\]
Letting $q = \frac{N(\beta+1)}{N-2s}$ we have
\begin{equation}\label{eq-trace-1}
\|u\|_{L^q (M)} \le C\(\|v\|_{L^{q'}(M)} + \|u\|_{L^{\frac{(N-2s)q}{N}}(M)}\).
\end{equation}
One may check that $\frac{1}{q} = \frac{1}{q'} - \frac{2s}{N}$.
Besides, since we took $\beta > 1$, it holds that $q' > {2N \over N+2s}$ and $q > p+1$.
On the other hand, if we test \eqref{eq_ext_1} with $(\wu^L)^{\beta-1} \wu$ for $0 < \beta \le 1$ where $\wu^L := \max\big\{|\wu|, L\big\}$
and follow the above argument except taking $L \to 0$ in \eqref{eq-trace-0} instead $L \to \infty$,
then we obtain \eqref{eq-trace-1} for $1 < q' \le {2N \over N+2s}$ and ${N \over N-2s} < q \le p+1$.

We claim further that $\|u\|_{L^q(M)} \le C_1 \|v\|_{L^{q'}(M)}$ holds for some $C_1 >0$.
To show this inequality, we assume that it does not hold for any $C_1$.
Then, we can find a sequence of functions $v_n \in L^{q'}(M)$, $U_n = i_{\bar{f}}^{*} (v_n)$ and $u_n = i(U_n)$
such that $\|u_n\|_{L^q (M)} =1$ and $\lim_{n \to \infty}\| v_n\|_{L^{q'}(M)} =0$.
By the compactness property whose proof is postponed to below, $u_n$ converges strongly in $L^{\frac{(N-2s)q}{N}}(M)$.
We let $u_0$ be its limit.
Applying \eqref{eq-trace-1} with $u_n$ and $v_n$, and then taking the limit $n \to \infty$, we obtain
\begin{equation}\label{eq-trace-2}
1 \le C\(\lim_{n \to \infty} \|v_n\|_{L^{q'} (M)} + \| u_n\|_{L^{\frac{(N-2s)q}{N}}(M)}\) = C\| u_0\|_{L^{\frac{(N-2s)q}{N}}(M)}.
\end{equation}
On the other hand, by employing Lemma \ref{lem-equiv}, the weighted trace inequality and H\"older's inequality, we find
\[\|u_n\|_{L^{\frac{2N}{N-2s}}(M)} \le  C \| U_n \|_{\tilde{f}} \le C \| v_n \|_{L^{\frac{2N}{N+2s}}(M)} \le C\|v_n\|_{L^{q'} (M)}.\]
From this estimate and $\lim_{n \to \infty} \|v_n\|_{L^{q'}(M)} = 0$, we have $\|u_0\|_{L^{\frac{2N}{N-2s}}(M)} = \lim_{n \to \infty}\|u_n\|_{L^{\frac{2N}{N-2s}}(M)} = 0$,
implying $u_0 \equiv 0$.
However it contradicts to \eqref{eq-trace-2}.
Hence the assertion that $\|u\|_{L^q (M)} \le C_1 \|v\|_{L^{q'}(M)}$ should hold for some $C_1 >0$.

We are left to prove the compactness of $\{u_n\}_{n=1}^{\infty}$ in $L^{\frac{(N-2s)q}{N}}(M)$.
By \eqref{eq-trace-0}, we get
\[\int_X \rho^{1-2s}\left|\nabla |U_n|^{\frac{\beta+1}{2}}\right|_{\bg}^2 dv_{\bg} + \int_M |U_n|^{\beta+1} dv_{\hh}
\le C\( \|v_n\|_{L^{q'} (M)} + \| u_n\|_{L^{\beta+1}(M)}\)^{\beta+1}.\]
Owing to Lemma \ref{lem-equiv-2}, it follows that $\left\{|U_n|^{\frac{\beta+1}{2}}\right\}_{n=1}^{\infty}$ is a bounded subset of $\hx$.
Thus $\left\{|U_n|^{\frac{\beta+1}{2}}\right\}_{n=1}^{\infty}$ is a compact set in $L^{\frac{2N}{N-2s}-\zeta}(M)$ for any small $\zeta > 0$, which in turn implies that
$\{U_n\}_{n=1}^{\infty}$ is a compact set in $L^{\frac{N(\beta+1)}{N-2s}-\zeta}(M)= L^{q-\zeta}(M)$ for every small $\zeta > 0$, hence in $L^{\frac{(N-2s)q}{N}}(M)$.
The proof is finished.
\end{proof}

\begin{cor}\label{cor_adj}
Fix any $q > {2N \over N+2s}$.
Then the adjoint map $i_f^*: L^q(M) \to \mh_{\ep}$ is compact for sufficiently small $\ep > 0$.
\end{cor}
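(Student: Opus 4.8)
The plan is to decompose the adjoint map $i_f^*$ as a composition of a bounded linear map into $\hx$ and a compact inclusion into $\mh_\ep$, reducing everything to Lemma \ref{lemma_trace} and the compactness of the trace embedding already established. First I would note that for $q > \frac{2N}{N+2s}$, Lemma \ref{lem-equiv} guarantees $i_f^* : L^q(M) \to \hx$ is bounded: multiplying \eqref{eq_adjoint} by $U$ and using the weighted trace inequality \eqref{eq_trace_mf} together with H\"older's inequality (since $q' = \frac{q}{q-1} < \frac{2N}{N-2s}$, which is the conjugate range) yields $\|U\|_{\tf}^2 \le C\|v\|_{L^q(M)} \|U\|_{L^{q'}(M)} \le C\|v\|_{L^q(M)}\|U\|_{\tf}$. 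So $U = i_f^*(v)$ lies in $\hx$ with $\|U\|_{\tf} \le C\|v\|_{L^q(M)}$.

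Next I would upgrade the target space to $\mh_\ep$. For $\ep > 0$ small we have $q_\ep = (p+1) + \frac{N}{2s}\ep$, so that $\frac{q_\ep}{p+\ep} = \frac{Nq_\ep}{N+2sq_\ep}$ by \eqref{eq_q_ep}; the point is that $\frac{Nq_\ep}{N+2sq_\ep} \to \frac{N(p+1)}{N+2s(p+1)} = \frac{2N}{N+2s}$ as $\ep \to 0^+$, so for $\ep$ small enough $\frac{Nq_\ep}{N+2sq_\ep} < q$ (strictly, since $q > \frac{2N}{N+2s}$). Then Lemma \ref{lemma_trace}, applied with the exponent $q_\ep \in (\frac{N}{N-2s}, \infty)$, gives $\|i(U)\|_{L^{q_\ep}(M)} \le C\|v\|_{L^{Nq_\ep/(N+2sq_\ep)}(M)} \le C\|v\|_{L^q(M)}$, where the last step is H\"older on the compact manifold $M$ using $\frac{Nq_\ep}{N+2sq_\ep} < q$. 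Combining with the previous paragraph, $i_f^*(v) \in \mh_\ep$ with $\|i_f^*(v)\|_{\tf,\ep} \le C\|v\|_{L^q(M)}$, so $i_f^* : L^q(M) \to \mh_\ep$ is at least bounded.

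Finally, for compactness I would exploit that the trace part of the $\mh_\ep$-norm is controlled by a slightly \emph{better} integrability exponent than actually needed. Take a bounded sequence $v_n$ in $L^q(M)$; set $U_n = i_f^*(v_n)$, $u_n = i(U_n)$. By the bound above, $\{U_n\}$ is bounded in $\hx$, hence (after a subsequence) converges weakly in $\hx$ and strongly in $L^r(M)$ for every $r < p+1$, to some $U_0 = i_f^*(v_0)$ where $v_n \rightharpoonup v_0$ weakly in $L^q(M)$. It remains to promote convergence to the $\mh_\ep$-norm, i.e. to $L^{q_\ep}(M)$ with $q_\ep > p+1$. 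Here I would apply Lemma \ref{lemma_trace} to the \emph{differences}: choosing an intermediate exponent $q_1$ with $\frac{N}{N-2s} < \frac{Nq_1}{N+2sq_1}$ slightly larger than $\frac{2N}{N+2s}$ but still $< q$, one gets $\|u_n - u_m\|_{L^{q_2}(M)} \le C\|v_n - v_m\|_{L^{q_1'}(M)}$ for some $q_2 > q_\ep$; since $q_1' < q$, the sequence $v_n$ is precompact in $L^{q_1'}(M)$ (reflexive $L^q$, weak convergence, and Lemma \ref{lemma_trace}'s own internal compactness argument — or simply interpolate: $v_n$ bounded in $L^q$ and the embedding into the strictly smaller exponent $q_1' < q$ combined with a.e. convergence along a further subsequence and Vitali), hence $u_n$ is Cauchy in $L^{q_2}(M) \hookrightarrow L^{q_\ep}(M)$. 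Together with the $\hx$-convergence of $U_n$, this gives $\|U_n - U_0\|_{\tf,\ep} \to 0$ along the subsequence, proving compactness. The main obstacle is the bookkeeping in the last step: one must check that the exponents can genuinely be chosen so that Lemma \ref{lemma_trace} outputs an exponent $q_2$ strictly above $q_\ep$ while its input conjugate exponent stays strictly below $q$ — this is exactly where the slack $q > \frac{2N}{N+2s}$ (rather than $q = \frac{2N}{N+2s}$) and the smallness of $\ep$ are used, and it should be verified by an explicit continuity-in-$\ep$ computation.
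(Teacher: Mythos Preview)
Your boundedness argument (the first two paragraphs) is correct and is exactly what one needs.  The compactness argument in the last paragraph, however, has a genuine gap.  You claim that a bounded sequence $v_n$ in $L^q(M)$ is precompact in $L^{q_1'}(M)$ for $q_1'<q$, justifying this by ``weak convergence \dots\ a.e.\ convergence along a further subsequence and Vitali.''  This is false: a bounded sequence in $L^q$ on a compact manifold need not admit any a.e.\ convergent subsequence (take oscillations such as $\sin(n\cdot)$), and the compactness established inside the proof of Lemma~\ref{lemma_trace} concerns the \emph{outputs} $u_n$, not the inputs $v_n$.  Hence your bound $\|u_n-u_m\|_{L^{q_2}}\le C\|v_n-v_m\|_{L^{q_1'}}$ gives nothing.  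A secondary issue: you only record \emph{weak} convergence of $U_n$ in $\hx$, whereas the $\mh_\ep$-norm contains $\|\cdot\|_{\tf}$ and therefore requires strong $\hx$-convergence; this can in fact be recovered (from $\|U_n\|_{\tf}^2=\int_M v_n u_n$ and weak--strong pairing, since $u_n\to u_0$ strongly in $L^{q'}(M)$ with $q'<p+1$), but you do not carry it out.

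The paper itself writes only ``It easily follows from the previous lemma and its proof,'' and the intended mechanism is different from yours.  For the $\|\cdot\|_{\tf}$ part one observes that $i_{\tf}^*$ is the adjoint of the compact trace $i:\hx\to L^{q'}(M)$ (here $q'=q/(q-1)<p+1$), so $i_{\tf}^*:L^q(M)\to\hx$ is compact by Schauder's theorem.  For the $L^{q_\ep}$ part one uses the Moser-type compactness already proved \emph{inside} Lemma~\ref{lemma_trace}: if $v_n$ is bounded in $L^{q_1}(M)$ with $q_1\in(\tfrac{2N}{N+2s},\tfrac{N}{2s})$ and $q_1\le q$, then $\{|U_n|^{(\beta+1)/2}\}$ is bounded in $\hx$, whence $\{u_n\}$ is precompact in $L^{q_2-\zeta}(M)$ for every small $\zeta>0$, where $\tfrac{1}{q_2}=\tfrac{1}{q_1}-\tfrac{2s}{N}$.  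Since $q_1>\tfrac{2N}{N+2s}$ forces $q_2>p+1$ and $q_\ep\to p+1$, for $\ep$ small one has $q_\ep<q_2-\zeta$ and the $L^{q_\ep}$-precompactness follows.  The point is that the compactness is produced by the regularity gain of the solution operator, not by any compactness of the data sequence $v_n$.
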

\begin{proof}
It easily follows from the previous lemma and its proof.
We leave the details to the reader.
\end{proof}

\noindent By Lemma \ref{lemma_trace}, if $u \in L^{q_{\ep} \over p+\ep}(M)$, then $i(i_{\tf}^*(u)) \in L^{q_{\ep}}(M)$.
Hence one may attempt to solve equation $(1_+)$ by writing
\[U = i^*_{\tf}\(u^{p+\ep}\) \quad \text{and} \quad U = u > 0 \quad \text{on } M\]
for $U \in \mh_{\ep}$.

\medskip
To unify the notation, we will use $(\mh_{\ep}, \|\cdot\|_{\tf, \ep})$ to denote $(\hx, \|\cdot\|_{\tf})$ from now even if we study the subcritical problem $(1_-)$ and the critical one \eqref{eq_main_2}.
Notice that if equations $(1_-)$ and \eqref{eq_main_2} are considered,
then $q_{\ep}$ in \eqref{eq_q_ep} should be read as ${2N \over N-2s} - {N \over 2s}\ep$ and ${2N \over N-2s}$, respectively.
Hence in this case the Banach spaces $(\mh_{\ep}, \|\cdot\|_{\tf, \ep})$ (defined according to \eqref{eq_H_ep} and \eqref{eq_H_eq_norm}) and $(\hx, \|\cdot\|_{\tf})$ are equivalent to each other, justifying our expression.

\subsection{The approximate solutions}
Recalling the number $r_1$ selected in \eqref{eq_E_local}, we choose $r_0 < r_1$ a positive number less than the quarter of the injectivity radius of $(M,\hh)$.
Let $\chi_1: (0,\infty) \to [0,1]$ be a smooth function such that $\chi_1 = 1$ in $(0, r_0)$ and 0 in $(2r_0, \infty)$.
Noting that any element $z \in X$ near the boundary can be denoted as $z = (\hs, \rho)$ for some $\hs \in M$ and $\rho \in (0, \infty)$, we define the function $\mw_\ds$ on $X$ (provided $\delta > 0$ and $\sigma \in M$) by
\begin{equation}\label{eq_first_approx}
\mw_\ds(z) = \mw_\ds(\hs,\rho) = \begin{cases} \chi_1(d(z,\sigma))W_{\delta}\(\exp_{\sigma}^{-1}(\hs), \rho\) &\text{if } d(z,\sigma) < 2r_0 \text{ for some } \sigma \in M,\\
0 &\text{otherwise},
\end{cases} \end{equation}
where $W_{\delta} = \text{Ext}^s(w_{\delta})$ is the function defined in Subsection \ref{subsec_trace_ineq}, $d_M(\cdot, \sigma)$ denotes the geodesic distance from $\sigma$ on $(M, \hh)$,
$d(\cdot, \sigma)$ is a positive function defined near the boundary of $(X, \bg)$ by the relation $d(z, \sigma)^2 = d((\hs,\rho),\sigma)^2 = d_M(\hs,\sigma)^2+\rho^2$ and exp is the exponential map on $(M, \hh)$.
Thus the parameter $\delta$ can be regarded as a concentration rate, while $\sigma$ expresses a blow-up point.
We set $\delta = \ep^{\alpha} \lambda$ where $\lambda > 0$ is an $\ep$-independent number.
The number $\alpha$ is chosen to be
\begin{equation}\label{eq_alpha}
\alpha = \begin{cases}
1/(2s) &\text{for problems \eqref{eq_main_12}},\\
1/(1-2s) &\text{for problem \eqref{eq_main_21}}.
\end{cases}
\end{equation}

In this paper, we search for solutions of \eqref{eq_main_12} and \eqref{eq_main_21} of the form $\mw_\els + \Phi$
where $\Phi$ is a function defined on $X$ whose $\mh_{\ep}$-norm is sufficiently small.
Because we regard the equations as perturbations of the \textit{limit equation} \eqref{eq_limit}, it is important to understand their linearized equations.
Hence it is natural to introduce
\[\mz^i_\ds(z) = \mz^i_\ds(\hs,\rho) = \begin{cases} \chi_1(d(z,\sigma))Z^i_{\delta}\(\exp_{\sigma}^{-1}(\hs), \rho\) &\text{if } d(z,\sigma) < 2r_0 \text{ for some } \sigma \in M,\\
0 &\text{otherwise},
\end{cases}\]
for $i = 0, \cdots, N$, where $Z^i_\ds$ is the function whose definition is presented in \eqref{eq_sol_lin}.
For each $\ep > 0$, let us also define the subspace of $\mh_{\ep}$
\[K_\ls^{\ep} = \text{Span}\left\{\mz^i_\els : i = 0, \cdots, N\right\}\]
and its orthogonal complement with respect to the inner product $\la \cdot, \cdot \ra_{\tf}$
\[\(K_\ls^{\ep}\)^\perp = \left\{U \in \mh_{\ep} : \la U, \mz^i_\els \ra_{\tf} = 0:  i = 0, \cdots, N \right\}.\]
Furthermore, denote by
\[\Pi_\ls^{\ep}: \mh_{\ep} \to K_\ls^{\ep} \quad \text{and} \quad \(\Pi_\ls^{\ep}\)^{\perp}: \mh_{\ep} \to \(K_\ls^{\ep}\)^\perp\]
the orthogonal projections onto $K_\ls^{\ep}$ and $(K_\ls^{\ep})^\perp$, respectively.

As mentioned before, we will apply the finite dimensional reduction method.
Namely, for a small fixed $\ep > 0$, we first solve an intermediate problem (in Section \ref{sec_inter})
\begin{equation}\label{eq_inter_1}
\(\Pi_\ls^{\ep}\)^{\perp}\left[(\mw_\els + \Phi_\els) -i_{\tf}^*\(i\(g_{\ep}(\mw_\els + \Phi_\els)\)\)\right] = 0 \end{equation}
for each parameter $(\ls) \in (0,\infty) \times M$ by employing the contraction mapping theorem,
where
\begin{equation}\label{eq_gep}
\left\{ \begin{array}{lll}
g_{\ep}(u) = u_+^{p \pm \ep} &\text{and } \tf = f &\text{if we consider } \eqref{eq_main_12},\\
g_{\ep}(u) = u_+^p - \ep fu &\text{and } \tf = 0 &\text{if we consider } \eqref{eq_main_21}.
\end{array}\right..
\end{equation}
Then we choose an appropriate $(\lambda_{\ep}, \sigma_{\ep})$ which makes
\begin{equation}\label{eq_inter_2}
\Pi_{\lambda_{\ep}, \sigma_{\ep}}^{\ep} \left[(\mw_\elep + \Phi_\elep) - i_{\tf}^*\(i\(g_{\ep}(\mw_\elep + \Phi_\elep)\)\)\right] = 0
\end{equation}
by finding a critical point of a suitable (localized) energy functional on $(0,\infty) \times M$ corresponding to the above problem \eqref{eq_inter_2}.
This is conducted in Section \ref{sec_energy}.
Observe that we modified the nonlinear term in \eqref{eq_inter_1} and \eqref{eq_inter_2} because we want to find a positive solution.

\medskip
Before concluding this section, we provide a lemma regarding the decay property of $W_{\delta}$ and $Z_{\delta}^i$, which will be used throughout the paper.
We defer its proof to Appendix \ref{sec_appen}.
\begin{lemma}\label{lemma_decay}
Assume that $N > 2s$, fix any $0 < R_1 < R_2$ and set $A^+_{(R_1,R_2)} = B^+_{R_2} \setminus B^+_{R_1}$. Then as $\delta \to 0$ we have the following estimates.
\begin{equation}\label{eq_decay_1}
\begin{aligned}
\int_{\mr^{N+1}_+ \setminus B^+_{R_1}} t^{1-2s} |\nabla W_{\delta}|^2 dxdt & = O\(\delta^{N-2s}\).
\\
\int_{B^+_{R_1}} t^{2-2s} |\nabla W_{\delta}|^2 dxdt & = \begin{cases}
O\(\delta\) &\text{for } N > 2s+1, \\
O\(\delta |\log \delta|\) &\text{for } N = 2s+1, \\
O\(\delta^{N-2s}\) &\text{for } N < 2s+1.
\end{cases}
\\
\int_{A^+_{(R_1,R_2)}} t^{1-2s} W_{\delta}^2 dxdt & = \begin{cases}
O\(\delta^{N-2s}\) &\text{for } N \ne 2s+2, \\
O\(\delta^2 |\log \delta|\) &\text{for } N = 2s+2.
\end{cases}
\end{aligned}
\end{equation}
Besides, the followings are also true.
\begin{equation}\label{eq_decay_3}
\begin{aligned}
\int_{\mr^{N+1}_+ \setminus B^+_{R_1}} t^{1-2s} \left|\nabla Z_{\delta}^i\right|^2 dxdt &= \begin{cases}
O\(\delta^{N-2s}\) &\text{for } i = 1, \cdots, N, \\
O\(\delta^{N-2s-2}\) &\text{for } i = 0.
\end{cases}
\\
\int_{A^+_{(R_1,R_2)}} t^{1-2s} \(Z_{\delta}^i\)^2 dxdt &= \begin{cases}
O\(\delta^{N-2s}\) &\text{for } i = 1, \cdots, N,\\
O\(\delta^{N-2s-2}\) &\text{for } i = 0 \text{ and } N \ne 2s+2,\\
O\(|\log \delta|\) &\text{for } i = 0 \text{ and } N = 2s+2.
\end{cases}
\end{aligned}
\end{equation}
We also know
\begin{equation}\label{eq_decay_2}
\int_{B^+_{R_1}} t^{1-2s} O\(|(x,t)|^2\) |\nabla W_{\delta}|^2 dx dt = \begin{cases}
O\(\delta^2\) &\text{for } N > 2s+2, \\
O\(\delta^2 |\log \delta|\) &\text{for } N = 2s+2, \\
O\(\delta^{N-2s}\) &\text{for } N < 2s+2.
\end{cases} \end{equation}
\end{lemma}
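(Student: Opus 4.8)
The proof is by rescaling to the case $\delta = 1$ together with pointwise decay of $W_1 := W_{1,0} = \mathrm{Ext}^s(w_1)$ and of its derivatives. Since the equation $\mathrm{div}(t^{1-2s}\nabla U) = 0$ is invariant under the dilations $(x,t)\mapsto(\lambda x,\lambda t)$ and $w_\delta(x) = \delta^{-(N-2s)/2}w_1(x/\delta)$, one has the exact identity $W_\delta(x,t) = \delta^{-(N-2s)/2}W_1(x/\delta,t/\delta)$. Differentiating in the translation and dilation parameters gives $Z^i_\delta(x,t) = -\delta^{-(N-2s)/2-1}(\partial_{y_i}W_1)(x/\delta,t/\delta)$ for $i=1,\dots,N$, and $Z^0_\delta(x,t) = \delta^{-(N-2s)/2-1}\big[-\tfrac{N-2s}{2}W_1-(y,\tau)\cdot\nabla W_1\big](x/\delta,t/\delta)$. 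Consequently, after the substitution $(x,t)=\delta(y,\tau)$, every integral in the statement becomes an explicit power of $\delta$ (read off by tracking the weight $t^{1-2s}$ or $t^{2-2s}$, the Jacobian $\delta^{N+1}$, and the homogeneity factors above) times the integral of a fixed function over a region that dilates with $\delta$: the exterior $\mr^{N+1}_+\setminus B^+_{R_1}$ becomes $\mr^{N+1}_+\setminus B^+_{R_1/\delta}$, the ball $B^+_{R_1}$ becomes $B^+_{R_1/\delta}$, and the annulus $A^+_{(R_1,R_2)}$ becomes $A^+_{(R_1/\delta,\,R_2/\delta)}$.

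\textbf{Decay input.} The second ingredient is the bound $0 < W_1(x,t) \le C(1+|(x,t)|)^{-(N-2s)}$, which I would obtain from the Poisson representation $W_1(x,t)=\int_{\mr^N}\mathcal{P}_s(x-y,t)\,w_1(y)\,dy$, $\mathcal{P}_s(x,t)=c_{N,s}\,t^{2s}(|x|^2+t^2)^{-(N+2s)/2}$, by splitting the $y$-integral at $|y|=|(x,t)|/2$ and inserting $w_1(y)\le C(1+|y|)^{-(N-2s)}$; the same argument applied to $\partial_{y_i}\mathcal{P}_s$ (equivalently, since $\partial_{y_i}W_1$ again solves the degenerate equation with Neumann datum $pw_1^{p-1}\partial_{y_i}w_1$) gives $|\partial_{y_i}W_1(x,t)|\le C(1+|(x,t)|)^{-(N-2s+1)}$, hence $Z^i_1$ decays two orders faster than $W_1$ for $i\ge 1$; whereas the combination defining $Z^0_1$ decays only like $W_1$ itself, i.e.\ like $(1+|(x,t)|)^{-(N-2s)}$ — this is exactly what causes the extra two powers of $\delta$ in the $i=0$ rows of \eqref{eq_decay_3}. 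To pass from these pointwise bounds to the weighted gradient estimates I would even-reflect $W_1$ across $\{t=0\}$ (the weight $|t|^{1-2s}$ being in the Muckenhoupt class $A_2$ and the Neumann datum $w_1^p$, which decays like $|x|^{-(N+2s)}$, being of strictly lower order) and apply a Caccioppoli inequality on dyadic shells $B_{2\rho}\setminus B_{\rho}$, so that $\int_{B_{2\rho}\setminus B_\rho}|t|^{1-2s}|\nabla W_1|^2\lesssim \rho^{-2}\int_{B_{3\rho}\setminus B_{\rho/2}}|t|^{1-2s}W_1^2 + (\text{lower order})$; summing over $\rho\ge R_1/\delta$ yields the claimed rates, and near $t=0$ the weighted integrands $t^{1-2s}W_1^2$, $t^{2-2s}|\nabla W_1|^2$, $t^{1-2s}|\nabla W_1|^2$ are locally integrable (recall $t^{1-2s}\partial_tW_1\to -\kappa_s^{-1}(-\Delta)^sw_1$ is bounded), so no boundary layer issue arises.

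\textbf{Assembling the integrals.} With these bounds, each estimate reduces to computing $\int_{r_1}^{r_2} r^{-\gamma}\,dr$ for suitable endpoints (with $r_1$ either a fixed constant or $R_1/\delta$, and $r_2$ either $\infty$ or $R_2/\delta$), after passing to polar-type coordinates in which the angular factor $\int_0^{\pi/2}(\cos\theta)^{1-2s}\sin^{N-1}\theta\,d\theta$ from the weight is a finite constant (it converges precisely because $s<1$). In each case the trichotomy $\gamma>1$, $\gamma=1$, $\gamma<1$ produces a bounded term, a $\log(1/\delta)$ term, or a term of size $(R/\delta)^{1-\gamma}$; multiplying by the prefactor power of $\delta$ reproduces exactly the three-way splits in \eqref{eq_decay_1}, \eqref{eq_decay_3}, \eqref{eq_decay_2}, the equalities $\gamma=1$ occurring respectively at $N=2s+1$ (for the $t^{2-2s}$ integral over $B^+_{R_1}$) and $N=2s+2$ (for the $W_\delta^2$ and $Z^0_\delta{}^2$ integrals over the annulus, and for the $O(|(x,t)|^2)$-weighted integral), which is the origin of every logarithm in the statement. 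I expect the only genuinely delicate point to be the behaviour of $\nabla W_1$ near the edge $t=0$, $|x|$ large, where the Neumann condition intervenes; the reflection plus $A_2$-weighted Caccioppoli argument is what sidesteps the need for any pointwise gradient bound there, and the fact that the Neumann datum $w_1^p$ decays two orders faster than $w_1$ guarantees it never affects the leading order. The routine bookkeeping of prefactor exponents and the dimensional borderlines I would defer to Appendix \ref{sec_appen}.
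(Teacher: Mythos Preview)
Your overall strategy---rescale to $\delta=1$, use pointwise decay of $W_1$, then integrate over dyadic shells---is correct and matches the paper's. The difference lies in how the gradient bounds are obtained. The paper works with the Riesz-potential representation $W_1(x,t)=\mathfrak{a}_{N,s}\int_{\mr^N} w_1^p(y)\,|(x-y,t)|^{-(N-2s)}\,dy$ (rather than the Poisson representation you use), differentiates under the integral, and derives \emph{pointwise} bounds on $\nabla W_1$, $\nabla\pa_i W_1$, $\nabla\pa_\delta W_1$ that include the boundary-singular term $t^{2s-1}/|(x,t)|^{N+2s}$ for the normal derivative; these pointwise bounds are then fed into an elementary lemma computing $\int_{A^+_{\delta^{-1}}}t^{1-2s}/|(x,t)|^{N+2-2s+a}$ and $\int_{A^+_{\delta^{-1}}}t^{2s-1}/|(x,t)|^{N+2s+a}$ (two separate weight regimes, one for each term in the gradient bound). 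Your route instead avoids pointwise gradient control altogether by even-reflecting and invoking a weighted Caccioppoli inequality on shells. This is a legitimate alternative: the Neumann datum contributes a boundary term of order $\rho^{-N}$ on the shell at radius $\rho$, which is indeed lower order than the main Caccioppoli term $\rho^{2s-N}$, and the crude bound $t\le 2\rho$ on each shell suffices to handle the $t^{2-2s}$ and $|(x,t)|^2 t^{1-2s}$ weights. What your approach buys is robustness (no delicate near-boundary pointwise analysis); what the paper's buys is sharper information---the pointwise gradient bounds are reused elsewhere (e.g.\ Lemma~\ref{lem-yw}), so they are worth proving directly. One minor slip: $\pa_{y_i}W_1$ decays \emph{one} order faster than $W_1$, not two; the integrals involving $(Z^i_1)^2$ then decay two orders faster than those with $W_1^2$, which after accounting for the extra $\delta^{-2}$ from the scaling of $Z^i_\delta$ gives exactly the same $O(\delta^{N-2s})$ as for $W_\delta$---so the bookkeeping comes out right, but the phrasing is off.
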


\section{Solvability of the intermediate problem}\label{sec_inter}
This section is devoted to solvability of the intermediate problem \eqref{eq_inter_1}.
\subsection{Estimates for the error}\label{subsec_error}
In this subsection, we shall obtain a uniform bound of the $\mh_{\ep}$-norm of the error term $\mw_\els-i_{\tf}^*(i(g_{\ep}(\mw_\els)))$
where $(\ls) \in (\lambda_1^{-1}, \lambda_1) \times M$ and $\ep > 0$ small, given any fixed number $\lambda_1 > 0$.
The positive number $\alpha$ was set in \eqref{eq_alpha}.

\begin{lemma}\label{lemma_error_est}
Assume that $N > \max\{4s,1\}$ for \eqref{eq_main_12} and $N \ge 2$ 
for \eqref{eq_main_21}. Given a fixed $\lambda_1 > 0$, it holds that
\begin{equation}\label{eq_error_est}
\left\|\mw_\els - i_{\tf}^*\(i(g_{\ep}(\mw_\els))\)\right\|_{\tf, \ep} = O\(\ep^{\gamma}\)
\end{equation}
where
\begin{equation}\label{eq_error_est_0}
\gamma = \begin{cases}
1-\zeta_0 &\quad \text{for problems \eqref{eq_main_12} if } 0 < s < {1 \over 3}, \\
\frac{1-s}{2s}-\zeta_0 &\quad \text{for problems \eqref{eq_main_12} if } {1 \over 3} \le s < {1 \over 2},\\
{N-2s \over 4s} - \zeta_0 &\quad \text{for problems \eqref{eq_main_12} with } 4s < N \le 2s+2 \text{ if } {1 \over 2} \le s < 1, \\
{1 \over 2s} - \zeta_0 &\quad \text{for problems \eqref{eq_main_12} with } N > 2s + 2 \text{ if } {1 \over 2} \le s < 1,\\
\frac{1-s}{1-2s}-\zeta_0 &\quad \text{for problem \eqref{eq_main_21}}
\end{cases} \end{equation}
uniformly $(\ls) \in (\lambda_1^{-1}, \lambda_1) \times M$.
Here $\zeta_0 > 0$ can be taken to be arbitrarily small.
\end{lemma}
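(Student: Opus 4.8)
The plan is to bound the two pieces of the $\mh_{\ep}$-norm in \eqref{eq_H_eq_norm} in turn, starting with the $\|\cdot\|_{\tf}$-part, which I treat by duality. By Lemma~\ref{lem-equiv} the bilinear form $\la\cdot,\cdot\ra_{\tf}$ is an inner product on $\hx$ whose norm is equivalent to $\|\cdot\|_{\hx}$, so
\[\left\|\mw_\els - i_{\tf}^*\(i(g_{\ep}(\mw_\els))\)\right\|_{\tf} = \sup\left\{\la \mw_\els - i_{\tf}^*\(i(g_{\ep}(\mw_\els))\),\, \Phi\ra_{\tf}\ :\ \Phi\in\hx,\ \|\Phi\|_{\tf}\le1\right\}.\]
Testing the first line of \eqref{eq_adjoint} against $\Phi$ and using the weighted Neumann condition there yields $\la i_{\tf}^*(v),\Phi\ra_{\tf} = \int_M v\,\phi\,dv_{\hh}$ with $\phi:=\Phi|_M$; writing $\delta=\ep^{\alpha}\lambda$ and $w:=\mw_\els|_M$, this turns the pairing into
\[\la \mw_\els - i_{\tf}^*\(i(g_{\ep}(\mw_\els))\),\,\Phi\ra_{\tf} = \kappa_s\int_X \(\rho^{1-2s}(\nabla \mw_\els,\nabla\Phi)_{\bg} + E(\rho)\mw_\els\Phi\)dv_{\bg} + \int_M \tf\, w\,\phi\,dv_{\hh} - \int_M g_{\ep}(w)\,\phi\,dv_{\hh}.\]
Since $\mw_\els$ is supported in the geodesic ball $B(\sigma,2r_0)\subset X$, I would pass to Fermi coordinates at $\sigma$, substitute the metric expansion of Lemma~\ref{lemma_metric_exp} and the curvature expansion \eqref{eq_E_local_2} of $E(\rho)$, integrate by parts, and compare with the limit equation \eqref{eq_limit}, whose weak form $\kappa_s\int t^{1-2s}(\nabla W_{\delta},\nabla\Phi)\,dxdt=\int_{\mr^N}w_{\delta}^p\,\phi\,dx$ supplies the leading contribution. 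This produces a decomposition
\[\la \mw_\els - i_{\tf}^*\(i(g_{\ep}(\mw_\els))\),\,\Phi\ra_{\tf} = \int_M \(w_{\delta}^p - g_{\ep}(w)\)\phi\,dv_{\hh} + \int_M \tf\, w_{\delta}\,\phi\,dv_{\hh} + \mathrm{(I)} + \mathrm{(II)} + \mathrm{(III)},\]
in which $\mathrm{(I)}$ collects the metric-correction terms such as $\int_X \rho^{1-2s}O(|(x,t)|)\,|\nabla W_{\delta}|\,|\nabla\Phi|$, $\mathrm{(II)}=\kappa_s\int_X E(\rho)\mw_\els\Phi$ is the interior term, and $\mathrm{(III)}$ gathers the commutator terms produced by the cutoff $\chi_1$, all supported where $r_0<d(z,\sigma)<2r_0$ and hence where $W_{\delta}=O(\delta^{(N-2s)/2})$.

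The second step is to bound each term by a constant times a power of $\delta=\ep^{\alpha}\lambda$ times $\|\Phi\|_{\tf}$. For the nonlinear term I split $w_{\delta}^p-g_{\ep}(w)$ into its genuinely perturbative part --- of relative order $\ep|\log\delta|$ with respect to $w_{\delta}^p$ for \eqref{eq_main_12} (by the mean value theorem), and exactly $\ep f w_{\delta}$ for \eqref{eq_main_21} --- plus a cutoff remainder, and use H\"older's inequality with the weighted trace inequality \eqref{eq_trace_mf} to rewrite $\int_M (\cdot)\,\phi$ as $\|\cdot\|_{L^{(p+1)/p}(M)}\,\|\Phi\|_{\hx}$; the term $\int_M \tf\, w_{\delta}\,\phi$, present only for \eqref{eq_main_12}, is handled the same way. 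For $\mathrm{(I)}$, $\mathrm{(II)}$ and $\mathrm{(III)}$ I apply H\"older's inequality in $X$ against the weight, bound $(\int_X \rho^{1-2s}\Phi^2)^{1/2}$ and, when $s<1/2$, $(\int_X \rho^{-2s}\Phi^2)^{1/2}$ by $C\|\Phi\|_{\hx}$ (the latter via \eqref{eq_bdry_est} with $a=-2s$ and \eqref{eq_trace_mf}), and estimate the remaining $W_{\delta}$-factors --- $\int_X \rho^{1-2s}O(|(x,t)|^2)|\nabla W_{\delta}|^2$, $\int_X \rho^{1-2s}W_{\delta}^2$, $\int_X \rho^{-2s}W_{\delta}^2$, and the tails over $\mr^{N+1}_+\setminus B^+_{2r_0}$ --- through Lemma~\ref{lemma_decay} and its easy companion for $\int_{B^+_{R_1}}\rho^{-2s}W_{\delta}^2$. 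Collecting all $\delta$-powers, and noting that $\delta^{c\ep}\to1$ as $\ep\to0$ so any logarithm in $\delta$ is absorbed into an arbitrarily small loss $\ep^{-\zeta_0}$, the smallest exponent among the terms is exactly the $\gamma$ of \eqref{eq_error_est_0}. The case split there records which contribution dominates --- the nonlinear perturbation, the mean-curvature part of $E(\rho)$ for $s\in(0,1/2)$, or the metric corrections for $s\in[1/2,1)$ --- which is governed by the position of $N$ relative to $2s$, $2s+1$ and $2s+2$ exactly as in Lemma~\ref{lemma_decay}; this is also why the hypotheses $N>\max\{4s,1\}$ for \eqref{eq_main_12} and $N\ge2$ for \eqref{eq_main_21}, together with $H=0$ when $s\ge1/2$ (which forces $|E(\rho)|\le C\rho^{1-2s}$), are imposed.

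For the supercritical problem $(15_+)$ one must moreover control $\|i(\mw_\els) - i(i_{\tf}^*(i(g_{\ep}(\mw_\els))))\|_{L^{q_{\ep}}(M)}$. I would write this as the $L^{q_{\ep}}(M)$-norm of $\big(\mw_\els|_M - i(i_{\tf}^*(w_{\delta}^p))\big) + i\big(i_{\tf}^*(w_{\delta}^p-g_{\ep}(w))\big)$: the second summand is bounded by $C\|w_{\delta}^p-g_{\ep}(w)\|_{L^{q_{\ep}/(p+\ep)}(M)}$ by Lemma~\ref{lemma_trace} and \eqref{eq_q_ep}, and is small just like the nonlinear term above, while the first summand is again a purely geometric error (both of its functions coincide with $w_{\delta}$ to leading order near $\sigma$ and are $O(\delta^{(N-2s)/2})$ away from it) and is estimated by running the regularity argument behind Lemma~\ref{lemma_trace} on the difference. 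Combining this with the bound on the $\|\cdot\|_{\tf}$-part established above gives \eqref{eq_error_est}.

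I expect the main obstacle to be the interior term $\mathrm{(II)}$ together with the metric corrections $\mathrm{(I)}$: in contrast with the Euclidean fractional-Laplacian setting, the profile $W_{\delta}$ solves only the flat equation \eqref{eq_limit}, so transplanting it onto $(X,\bg)$ necessarily creates these geometric errors, and when $s<1/2$ with $H(\sigma)\ne0$ the weight $E(\rho)\sim\tfrac{N-2s}{2}H(\sigma)\rho^{-2s}$ is singular along $M$; making $\mathrm{(I)}$ and $\mathrm{(II)}$ subordinate to $\ep^{\gamma}$ is precisely what forces the admissible dimension ranges and the choice of $\alpha$ in \eqref{eq_alpha}.
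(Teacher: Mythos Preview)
Your treatment of the $\|\cdot\|_{\tf}$-part is essentially the paper's own proof: both use the dual characterization of the norm, expand the pairing in Fermi coordinates, subtract off the weak form of the flat equation \eqref{eq_limit}, and estimate the metric correction, the $E(\rho)$-term, the potential term and the nonlinear remainder one by one with Lemma~\ref{lemma_decay}, H\"older's inequality and \eqref{eq_trace_mf}. The only noteworthy technical point you leave implicit is that the test function $\Phi$ may be multiplied by a cutoff supported near $\sigma$ at the cost of a fixed constant in the norm, which the paper spells out; otherwise your decomposition into $\mathrm{(I)}$, $\mathrm{(II)}$, $\mathrm{(III)}$ and the two boundary integrals matches \eqref{eq_error_est_2}--\eqref{eq_error_est_5}.

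Where you diverge from the paper is in the supercritical $L^{q_{\ep}}$-estimate. You propose to split $w - i\big(i_{\tf}^*(g_{\ep}(w))\big)$ into a geometric piece $w - i\big(i_{\tf}^*(w_{\delta}^p)\big)$ and a perturbative piece $i\big(i_{\tf}^*(w_{\delta}^p - g_{\ep}(w))\big)$, treating the latter by Lemma~\ref{lemma_trace} and the former ``by running the regularity argument behind Lemma~\ref{lemma_trace} on the difference.'' That geometric piece is not of the form $i\circ i_{\tf}^*(v)$, so Lemma~\ref{lemma_trace} does not apply verbatim; you would have to derive the (inhomogeneous) PDE satisfied by the difference and redo a Moser-type iteration with source terms, which is feasible but heavier. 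The paper avoids this entirely by an interpolation trick: having already established $\|w - i(i_{\tf}^*(g_{\ep}(w)))\|_{L^{p+1}(M)} \le C\|\cdot\|_{\tf} = O(\ep^{\gamma})$, it interpolates between $L^{p+1}$ and $L^{2(p+1)}$, bounds the $L^{2(p+1)}$-norm crudely by a negative power of $\ep$ via the triangle inequality and Lemma~\ref{lemma_trace}, and observes that the interpolation exponent $r_{\ep}$ is $O(\ep)$ so that $\ep^{-c\,r_{\ep}} = O(1)$. This yields \eqref{eq_error_0} in two lines, whereas your route would require a separate regularity estimate for the geometric remainder.
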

\noindent Before starting the proof, we remark that $\gamma > 1/2$ for problems \eqref{eq_main_12}, while $\gamma > 1/(2(1-2s))$ for problem \eqref{eq_main_21}.
\begin{proof}
Let us take into account the subcritical problem $(15_-)$,
recalling the notation $\delta = \ep^{1 \over 2s} \lambda \in \(\ep^{1 \over 2s}\lambda_1^{-1}, \ep^{1 \over 2s}\lambda_1\)$.
Here we will use the dual characterization of the norm
\[\|U\|_f = \sup\left\{\la U, \Phi \ra_f: \|\Phi\|_f \le 1\right\}\]
which holds for any $U \in \hx$.
For a fixed $\Phi \in \hx$ such that $\|\Phi\|_f \le 1$, we have
\begin{multline}\label{eq_error_est_1}
\la \mw_\ds, \Phi \ra_f - \la i(\mw_\ds^{p \pm \ep}), \phi \ra_{L^{p \over p+1}(M)}\\
= \kappa_s \int_{B_{\bg}^+(\sigma, 2r_0)} \(\rho^{1-2s} (\nabla \mw_\ds, \nabla \Phi)_{\bg} + E(\rho) \mw_\ds \Phi\) dv_{\bg} + \int_{B_{\hh}(\sigma, 2r_0)} \(f \mw_\ds - \mw_\ds^{p \pm \ep}\) \phi dv_{\hh}
\end{multline}
where
\begin{equation}\label{eq_B_bg}
B_{\bg}^+(\sigma, 2r_0) := \{z \in X: d(z,\sigma) < 2r_0\}, \quad B_{\hh}(\sigma, 2r_0) := \{\hs \in M: d_M(\hs,\sigma) < 2r_0\}
\end{equation}
and $\phi = i(\Phi)$.
Note that in Section \ref{sec_scheme} the distance functions $d(\cdot, \sigma)$ and $d_M(\cdot, \sigma)$ were introduced in setting the first approximation $\mw_\ds$ for a solution,
for each fixed $\sigma \in M$ (see \eqref{eq_first_approx}).
Since the domains of the above integrations are small neighborhoods of the point $\sigma$ in $X$ and $M$, respectively,
we may replace $\Phi$ by $\chi_1(d(\cdot,\sigma)/2)\Phi$ for instance without affecting on the value of the integrations,
where $\chi_1$ is a cut-off function introduced for \eqref{eq_first_approx}.
Moreover, by the equivalence of two norms $\|\cdot\|_f$ and $\|\cdot\|_{\hx}$, it can be easily seen that $\|\chi_1(d(\cdot,\sigma)/2)\Phi\|_f \le  C_0\|\Phi\|_f \le C_0$ where $C_0 > 0$ is a number not relying on the choice of $\Phi$.
Therefore, to obtain \eqref{eq_error_est}, we may without any loss of generality regard $\Phi$ (or $\phi$) as a function on $\mr^{N+1}_+$ (or $\mr^N$)
and assume that its support is contained in $\overline{B_{\bg}^+} := \overline{B_{\bg}^+(\sigma, 4r_0)} \subset \overline{\mr^{N+1}_+}$ $\(\text{or } \overline{B_{\hh}} := \overline{B_{\hh}(\sigma, 4r_0)} \subset \mr^N\)$.

Now we shall estimate each of the right-hand side of \eqref{eq_error_est_1}.
For this objective, we denote $\Phi_{\delta^{-1}}(z) = \delta^{N-2s \over 2} \Phi(\delta z)$ for all $z \in \mr^{N+1}_+$ and $\phi_{\delta^{-1}} = i(\Phi_{\delta^{-1}})$, for which it holds that
\begin{equation}\label{eq_scale_inv}
\left\|\Phi_{\delta^{-1}}\right\|_{\dmr}^2 = \int_{\mr^{N+1}_+} t^{1-2s} |\nabla \Phi_{\delta^{-1}}(z)|^2 dxdt \le C
\end{equation}
by the scaling invariance.
Firstly, from \eqref{eq_decay_1} and the estimate that
\begin{align*}
\int_{B_{\bg}^+} \rho^{1-2s} |z||\nabla \mw_\ds||\nabla \Phi| dz
&\le C \left[ \( \int_{B^+_{2r_0} \setminus B^+_{r_0}} t^{1-2s} W_{\delta}^2 dz\)^{1 \over 2}
+ \( \int_{B^+_{2r_0}} t^{1-2s} |z|^2|\nabla W_{\delta}|^2 dz\)^{1 \over 2} \right]
\\
&= \begin{cases}
O(\delta) = O\(\ep^{1 \over 2s}\) &\text{if } N > 2s + 2,\\
O\(\delta |\log \delta|^{1 \over 2}\) = O\(\ep^{1 \over 2s}|\log \ep|^{1 \over 2}\) &\text{if } N = 2s + 2,\\
O\(\delta^{N-2s \over 2}\) = O\(\ep^{N-2s \over 4s}\) &\text{if } N < 2s + 2,
\end{cases} \end{align*}
we find
\begin{equation}\label{eq_error_est_2}
\begin{aligned}
&\kappa_s \int_{B_{\bg}^+} \rho^{1-2s} (\nabla \mw_\ds, \nabla \Phi)_{\bg} dv_{\bg}\\
&= \kappa_s \int_{\mr^{N+1}_+} t^{1-2s} \nabla W_{\delta} \cdot \nabla \Phi dz +
O\(\int_{B_{\bg}^+} \rho^{1-2s} |z||\nabla \mw_\ds||\nabla \Phi| dz\) +
\begin{cases}
O\(\delta^{N-2s \over 2}\) &\text{if } N \ne 2s + 2,\\
O\(\delta |\log \delta|^{1 \over 2}\) &\text{if } N = 2s + 2,
\end{cases}
\\
&= \kappa_s \int_{\mr^{N+1}_+} t^{1-2s} \nabla W_1 \cdot \nabla \Phi_{\delta^{-1}} dz + \begin{cases}
O(\delta) &\text{if } N > 2s + 2,\\
O\(\delta |\log \delta|^{1 \over 2}\) &\text{if } N = 2s + 2,\\
O\(\delta^{N-2s \over 2}\) &\text{if } N < 2s + 2,
\end{cases} \\
&= \int_{\mr^N} w_1^p(x) \phi_{\delta^{-1}}(x) dx + \begin{cases}
O\(\ep^{1 \over 2s}\) &\text{if } N > 2s + 2,\\
O\(\ep^{1 \over 2s}|\log \ep|^{1 \over 2}\) &\text{if } N = 2s + 2,\\
O\(\ep^{N-2s \over 4s}\) &\text{if } N < 2s + 2.
\end{cases}
\end{aligned}
\end{equation}
Also, if $1/2 \le s < 1$ and $H = 0$, then \eqref{eq_E_local_2} and the Cauchy-Schwarz inequality imply
\begin{equation}\label{eq_error_est_3}
\begin{aligned}
& \left| \kappa_s \int_{B_{\bg}^+} E(\rho) \mw_\ds \Phi dv_{\bg} \right|
\le C \(\int_{B_{\bg}^+} \rho^{1-2s} \mw_\ds^2 dv_{\bg}\)^{1 \over 2} \cdot \(\int_{B_{\bg}^+} \rho^{1-2s} \Phi^2 dv_{\bg}\)^{1 \over 2}\\
& \le C \(\int_{B^+_{2r_0}} t^{1-2s} W_{\delta}^2(z)dz\)^{1 \over 2}
= \begin{cases}
O(\delta) = O\(\ep^{1 \over 2s}\) &\text{if } N > 2s + 2,\\
O\(\delta |\log \delta|^{1 \over 2}\) = O\(\ep^{1 \over 2s}|\log \ep|^{1 \over 2}\) &\text{if } N = 2s + 2,\\
O\(\delta^{N-2s \over 2}\) = O\(\ep^{N-2s \over 4s}\) &\text{if } N < 2s + 2.
\end{cases} \end{aligned} \end{equation}
In the case that $0 < s < 1/2$, we take $\zeta_1 > 0$ small enough so that $1-(s+\zeta_1) > 1/2$. It follows that
\begin{equation}\label{eq_error_est_32}
\begin{aligned}
& \left| \kappa_s \int_{B_{\bg}^+} E(\rho) \mw_\ds \Phi dv_{\bg} \right| \le C \int_{B_{\bg}^+}\rho^{-2s} |\mw_\ds ||\Phi | dv_{\bg}
\\
&\le C \(\int_{B_{\bg}^+} \rho^{1-2s-2(s+\zeta_1)} \mw_\ds^2 dv_{\bg}\)^{1 \over 2} \cdot \(\int_{B_{\bg}^+} \rho^{-1+2\zeta_1} \Phi^2 dv_{\bg}\)^{1 \over 2}\\
& \le C \(\int_{B^+_{2r_0}} t^{1-2s-2(s+\zeta_1)} W_{\delta}^2(z)dz\)^{1 \over 2}
= O\(\delta^{1-(s+\zeta_1)}\) = O\(\ep^{(1-(s+\zeta_1))/2s}\) \quad \text{for } N \ge 2.
\end{aligned} \end{equation}
On the other hand, if $\zeta_2$ is a number chosen to be
\[\zeta_2 = \begin{cases}
{N \over N-2s} + \zeta_2' &\text{for } 4s < N \le 6s \text{ where } \zeta_2' > 0 \text{ is arbitrarily small},\\
{2N \over N+2s} &\text{for } N > 6s,
\end{cases}\]
then thanks to the Sobolev trace inequality \eqref{eq_trace_mf}, it can be computed that
\begin{equation}\label{eq_error_est_4}
\begin{aligned}
\left|\int_{B_{\hh}} f \mw_\ds \phi dv_{\hh}\right| &\le C\|f\|_{L^{\infty}(M)}\|w_{\delta}\|_{L^{\zeta_2}(\mr^N)}\|\Phi\|_{\hx} \\
&= \begin{cases}
O\(\delta^{{N-2s \over 2} - \zeta_2''}\) = O\(\ep^{{N-2s \over 4s} - {\zeta_2'' \over 2s}}\) &\text{for } 4s < N \le 6s,\\
O\(\delta^{2s}\) = O(\ep) &\text{for } N > 6s.
\end{cases} \end{aligned} \end{equation}
Here $\zeta_2'' > 0$ is again a small number depending on the selection of $\zeta_2'$. Moreover one has
\begin{equation}\label{eq_error_est_5}
- \int_{B_{\hh}} \mw_\ds^{p \pm \ep}\phi dv_{\hh} = - \int_{B_{\hh}} \mw_\ds^p \phi dv_{\hh} + O(\ep |\log \ep|)
= - \int_{\mr^N} w_1^p(x) \phi_{\delta^{-1}}(x) dx + O(\ep |\log \ep|).
\end{equation}
Consequently, combining all computations \eqref{eq_error_est_1} and \eqref{eq_error_est_2}-\eqref{eq_error_est_5}, we obtain the validity of the first estimate of \eqref{eq_error_est}.

The error estimate \eqref{eq_error_est} for problem \eqref{eq_main_21} can be handled in a similar way and we omit it.

Now we are left to handle the supercritical problems $(15_+)$.
To obtain the conclusion, it suffices to show that
\begin{equation}\label{eq_error_0}
\left\|w_\ds - i\(i_{\tf}^*(g_{\ep}(w_\ds))\)\right\|_{L^{q_{\ep}}(M)} = O\(\ep^{\gamma}\).
\end{equation}
By the trace inequality \eqref{eq_trace_mf} and the computations made above, we have
\begin{equation}\label{eq_error_1}
\begin{aligned}
&\left\|w_\ds - i\(i_{\tf}^*(g_{\ep}(w_\ds))\)\right\|_{L^{q_{\ep}}(M)}
\\
&\le C\left\|w_\ds - i\(i_{\tf}^*(g_{\ep}(w_\ds))\)\right\|_{L^{p+1}(M)}^{1-r_{\ep}}
\cdot \left\|w_\ds - i\(i_{\tf}^*(g_{\ep}(w_\ds))\)\right\|_{L^{2(p+1)}(M)}^{r_{\ep}}
\\
&\le  C\left\|\mw_\ds - i_{\tf}^*\(g_{\ep}(w_\ds)\)\right\|_f^{1-r_{\ep}}
\cdot \left\|w_\ds - i\(i_{\tf}^*(g_{\ep}(w_\ds))\)\right\|_{L^{2(p+1)}(M)}^{r_{\ep}}
\\
&\le  C \ep^{\gamma(1-r_{\ep})} \cdot \left\|w_\ds - i\(i_{\tf}^*(g_{\ep}(w_\ds))\)\right\|_{L^{2(p+1)}(M)}^{r_{\ep}}
\end{aligned}
\end{equation}
where $r_{\ep} \in (0,1)$ satisfies
\[\frac{1-r_{\ep}}{p+1} + \frac{r_{\ep}}{2(p+1)} = \frac{1}{q_{\ep}},\]
which leads to $r_{\ep}= \frac{N}{s\left[(p+1) + \frac{N}{2s}\ep\right]} \ep$.
Applying Lemma \ref{lemma_trace} we see that
\begin{align*}
\left\|w_\ds - i\(i_{\tf}^*(g_{\ep}(w_\ds))\)\right\|_{L^{2(p+1)}(M)}
&\le \left\|w_\ds\right\|_{L^{2(p+1)}(M)} + \left\| i\(i_{\tf}^*(g_{\ep}(w_\ds))\)\right\|_{L^{2(p+1)}(M)}
\\
&\le C\( \ep^{-\frac{N-2s}{4s}} +\left\|g_{\ep}(w_\ds)\right\|_{L^{\frac{4N}{N+6s}}(M)}\)
\\
&\le C\( \ep^{-\frac{N-2s}{4s}} +\ep^{{N-2s \over 8s}}\).
\end{align*}
Using this and the fact that $\ep^{-\ep}= O(1)$, we deduce the desired estimate \eqref{eq_error_0} from \eqref{eq_error_1}.
\end{proof}

\subsection{Linear theory}\label{sec_lin}
To solve \eqref{eq_inter_1}, it is important to understand the linear operator
\begin{equation}\label{eq_lin_op}
L^\ep_\ls(\Phi) := \Phi - (\Pi_\ls^{\ep})^\perp i^*_{\tf}(i(g'_{\ep}(\mw_\els)\Phi)) \quad \text{for } \Phi \in \(K_\ls^{\ep}\)^\perp
\end{equation}
where the function $g_{\ep}$ and $\tf$ are defined in \eqref{eq_gep}.
Letting $\Psi = L^\ep_\ls(\Phi)$, we see that the expression
\begin{equation}\label{eq_lin_op_2}
\begin{cases}
\Phi - i^*_f(i(g'_{\ep}(\mw_\els)\Phi)) = \Psi + \sum_{i=0}^N c_i \mz^i_\els &\text{in } X,\\
\la \Phi, \mz^i_\els \ra_{\tf} = 0 &\text{for all } i = 0, \cdots, N
\end{cases} \end{equation}
with certain pair of constants $(c_0, \cdots, c_N) \in \mr^{N+1}$, is equivalent to \eqref{eq_lin_op}.

This subsection is devoted to deduce that for a fixed $\Psi \in (K_\ls^{\ep})^\perp
$, there are a unique function $\Phi \in (K_\ls^{\ep})^\perp
$ and an $(N+1)$-tuple $(c_0, \cdots, c_N) \in \mr^{N+1}$ satisfying \eqref{eq_lin_op_2}.
This is the content of Proposition \ref{prop_lin_ortho}.
It comes from the fact that the operators $L^\ep_\ls: (K_\ls^{\ep})^\perp \to (K_\ls^{\ep})^\perp$ have the inverses
whose norms are uniformly bounded for $(\ls) \in (\lambda_1^{-1}, \lambda_1) \times M$ and sufficiently small $\ep > 0$ (refer to Lemma \ref{lemma_lin_op_bdd}).

\medskip
We start the proof by showing the \textit{almost orthogonality} of $\mz^i_\ds$'s with respect to the inner product $\la \cdot, \cdot \ra_f$.
As before, we use $\delta = \ep^{\alpha} \lambda$.
\begin{lemma}\label{lemma_lin_ortho}
For each $i, j \in \{0, \cdots, N\}$, we have
\begin{equation}\label{eq_lin_ortho}
\la \mz^i_\ds, \mz^j_\ds \ra_{\tf} = {1 \over \delta^2} \(\beta_i \delta_{ij} + o(1)\) \quad \text{as } \ep \to 0
\end{equation}
where $\beta_i > 0$.
\end{lemma}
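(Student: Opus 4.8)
The plan is to reduce the computation of $\la \mz^i_\ds, \mz^j_\ds \ra_{\tf}$ to the corresponding computation for the flat bubble derivatives $Z^i_\delta$ on $\mr^{N+1}_+$, where an explicit (scaling) evaluation is available, and to absorb all the geometric and cut-off corrections into the $o(1)$ error. First I would recall that $\mz^i_\ds$ is supported in $B^+_{\bg}(\sigma, 2r_0)$ and, in Fermi coordinates centered at $\sigma$, equals $\chi_1(d(z,\sigma)) Z^i_\delta(\exp_\sigma^{-1}(\hs), \rho)$. By Lemma \ref{lem-equiv} the norm $\|\cdot\|_{\tf}$ is equivalent to $\|\cdot\|_{\hx}$, and by the definition \eqref{eq_sobolev_norm_2} of $\la \cdot,\cdot\ra_{\tf}$ we have
\[
\la \mz^i_\ds, \mz^j_\ds \ra_{\tf} = \kappa_s \int_X \(\rho^{1-2s}(\nabla \mz^i_\ds, \nabla \mz^j_\ds)_{\bg} + E(\rho) \mz^i_\ds \mz^j_\ds\) dv_{\bg} + \int_M \tf\, z^i_\ds z^j_\ds\, dv_{\hh}.
\]
I would then expand each of the three pieces using the metric expansion of Lemma \ref{lemma_metric_exp} (so $dv_{\bg} = (1 + O(|(x,t)|))\,dx\,dt$ and $\bg^{-1} = \mathrm{Id} + O(|(x,t)|)$ on the support), the bound $|E(\rho)| \le C\rho^{1-2s}$ when $H=0$ or $|E(\rho)| \le C\rho^{-2s}$ when $s<1/2$ (from \eqref{eq_E_local_2}), and $|\tf|\le C$.

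The leading term is $\kappa_s \int_{\mr^{N+1}_+} t^{1-2s} \nabla Z^i_\delta \cdot \nabla Z^j_\delta\, dx\, dt$. Here I would use the scaling $Z^i_\delta(x,t) = \delta^{-\frac{N-2s}{2}-1} Z^i_1(x/\delta, t/\delta)$ for $i=0,\dots,N$, which after change of variables pulls out exactly a factor $\delta^{-2}$ and leaves the $\delta$-independent integral $\kappa_s \int_{\mr^{N+1}_+} t^{1-2s} \nabla Z^i_1 \cdot \nabla Z^j_1\, dx\, dt =: \beta_i \delta_{ij}$. The off-diagonal vanishing $i\ne j$ is the known orthogonality of the kernel elements $Z^0_1, \dots, Z^N_1$ of \eqref{eq_lin_pro} (the spatial translations $Z^1,\dots,Z^N$ are mutually orthogonal by the rotational/reflection symmetry of $w_1$, and each is orthogonal to the dilation mode $Z^0$ by parity in the corresponding $x_k$-variable); positivity $\beta_i>0$ is immediate since $Z^i_1 \not\equiv 0$ is not constant. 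I should also record the elementary fact that $\|Z^i_1\|_{\dmr} < \infty$, which follows from Lemma \ref{lemma_decay} (with $\delta = 1$) together with the smoothness of $W_1$ near the origin.

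It remains to show that all remaining contributions are $o(\delta^{-2})$. The cut-off error: replacing $\chi_1 Z^i_\delta$ by $Z^i_\delta$ introduces terms supported in the annulus $A^+_{(r_0, 2r_0)}$ involving $|\nabla Z^i_\delta|^2$, $|Z^i_\delta|^2$ and $|Z^i_\delta||\nabla Z^i_\delta|$; by \eqref{eq_decay_3} these are $O(\delta^{N-2s-2})$ (or $O(|\log\delta|)$ when $N = 2s+2$, for $i=0$), which is $o(\delta^{-2})$ precisely because $N > 2s$ — and here $\delta = \ep^\alpha\lambda \to 0$. The metric-perturbation error in the gradient term is bounded by $C\int_{B^+_{2r_0}} t^{1-2s} |(x,t)| |\nabla Z^i_\delta||\nabla Z^j_\delta| \le C\delta\int t^{1-2s}|\nabla Z^i_1||\nabla Z^j_1| = O(\delta^{-1})$ after scaling, hence $o(\delta^{-2})$. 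The $E(\rho)$ term is bounded, using $|E(\rho)| \le C\rho^{-2s}$ and scaling, by $C\delta^{-1}\int t^{-2s}|Z^i_1||Z^j_1|$ (when $s<1/2$; the integral converges by the decay of $Z^i_1$ since $N\ge 2 > 4s$ in the relevant range), which is $O(\delta^{-1}) = o(\delta^{-2})$; when $H=0$ one uses $|E(\rho)|\le C\rho^{1-2s}$ and the bound $\int t^{1-2s} W_\delta^2$ from \eqref{eq_decay_1} scaled appropriately, again $o(\delta^{-2})$. Finally the boundary term $\int_M \tf z^i_\ds z^j_\ds\, dv_{\hh}$ is $O(\int_{\mr^N} |z^i_\delta||z^j_\delta|\,dx) = O(\delta^{2s})$ by scaling (the $L^1$-norm of $z^i_1 z^j_1$ is finite since these decay like $|x|^{-2(N-2s)}$ and $2(N-2s) > N$ when $N > 4s$), which is negligible. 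Collecting everything gives \eqref{eq_lin_ortho}. The main technical point to be careful about — rather than a deep obstacle — is bookkeeping the borderline dimensions $N = 2s+1$ and $N = 2s+2$ in Lemma \ref{lemma_decay} to confirm that even the logarithmically-enhanced error terms remain $o(\delta^{-2})$ under the standing hypothesis $N > \max\{4s, 1\}$ (resp. $N \ge 2$), and making sure the scaling exponents are tracked consistently for the $i=0$ mode, whose extension derivative $Z^0_\delta$ scales with one extra negative power of $\delta$ compared with $Z^1_\delta,\dots,Z^N_\delta$.
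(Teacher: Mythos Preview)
Your approach is essentially the same as the paper's: decompose $\la \mz^i_\ds, \mz^j_\ds \ra_{\tf}$ into the gradient, $E(\rho)$, and boundary pieces, rescale the gradient term to extract the $\delta^{-2}$ factor, and bound the remaining pieces by lower order using the decay estimates of Lemma~\ref{lemma_decay}. The paper compresses the orthogonality by invoking the linearized equation \eqref{eq_lin_pro} to rewrite $\kappa_s\int t^{1-2s}\nabla Z^i_1\cdot\nabla Z^j_1 = p\int w_1^{p-1}z^i_1 z^j_1$, whereas you argue parity directly; both are fine.

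Two small slips to fix. First, your boundary-term scaling is off by $\delta^{-2}$: since $z^i_\delta(x)=\delta^{-\frac{N-2s}{2}-1}z^i_1(x/\delta)$, one has $\int_{\mr^N}|z^i_\delta||z^j_\delta|\,dx = O(\delta^{2s-2})$, not $O(\delta^{2s})$; this is still $o(\delta^{-2})$, so the conclusion stands. Second, your closing caution that $Z^0_\delta$ ``scales with one extra negative power of $\delta$'' contradicts your own (correct) earlier formula $Z^i_\delta = \delta^{-\frac{N-2s}{2}-1}Z^i_1(\cdot/\delta)$ for all $i=0,\dots,N$; what is different about $i=0$ is the spatial decay of $Z^0_1$ at infinity (one power weaker), which is precisely what produces the distinct annulus bounds in \eqref{eq_decay_3}, not the $\delta$-scaling.
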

\begin{proof}
Recalling that $Z_1^i$'s are solutions of \eqref{eq_lin_pro}, we compute with estimates \eqref{eq_decay_2} and \eqref{eq_decay_3} that
\begin{align*}
\delta^2 \la \mz^i_\ds, \mz^j_\ds \ra_{\tf} &= \kappa_s \delta^2 \int_X \(\rho^{1-2s} \(\nabla \mz^i_\ds, \nabla \mz^j_\ds\)_{\bg} + E(\rho) \mz^i_\ds \mz^j_\ds\) dv_{\bg} + \delta^2 \int_M \tf \mz^i_\ds \mz^j_\ds dv_{\hh}\\
&= \(\int_{\mr^{N+1}_+} t^{1-2s} \nabla Z_1^i \cdot \nabla Z_1^j dxdt + o(1)\) + O\(\delta^2\) + O\(\delta^{2s}\)\\
&= p\int_{\mr^N} w_1^{p-1}z^i_1z^j_1 dx + o(1),
\end{align*}
which implies \eqref{eq_lin_ortho}.
\end{proof}

\noindent From the above lemma and the nondegeneracy result of \cite{DDS} described in Subsection \ref{subsec_trace_ineq}, the following invertibility result of the linear operator $L^\ep_\ls$ can be deduced.
\begin{lemma}\label{lemma_lin_op_bdd}
Suppose that $N > 2s$, $(\ls) \in (\lambda_1^{-1}, \lambda_1) \times M$ and $\ep > 0$ is small enough.
Then there exists a constant $C > 0$ independent of the choice of $(\ls)$ and $\ep$ such that
\begin{equation}\label{eq_lin_op_est}
\|L^\ep_\ls(\Phi)\|_{\tf, \ep} \ge C \|\Phi\|_{\tf, \ep}
\end{equation}
for all $\Phi \in (K_\ls^{\ep})^\perp$.
\end{lemma}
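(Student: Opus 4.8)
\textbf{Proof plan for Lemma \ref{lemma_lin_op_bdd}.}
The plan is to argue by contradiction in the standard Lyapunov--Schmidt fashion. Suppose the estimate fails; then there exist sequences $\ep_n \to 0$, parameters $(\lambda_n, \sigma_n) \in (\lambda_1^{-1}, \lambda_1) \times M$, and functions $\Phi_n \in (K^{\ep_n}_{\lambda_n,\sigma_n})^{\perp}$ with
\[
\|\Phi_n\|_{\tf, \ep_n} = 1 \quad \text{but} \quad \|L^{\ep_n}_{\lambda_n,\sigma_n}(\Phi_n)\|_{\tf, \ep_n} \to 0 \quad \text{as } n \to \infty.
\]
Write $\Psi_n := L^{\ep_n}_{\lambda_n,\sigma_n}(\Phi_n)$, so that $\|\Psi_n\|_{\tf, \ep_n} \to 0$. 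By \eqref{eq_lin_op_2} there are constants $c_0^n, \cdots, c_N^n$ with
\[
\Phi_n - i^*_{\tf}\!\(i\(g'_{\ep_n}(\mw_{\ep_n^{\alpha}\lambda_n, \sigma_n})\Phi_n\)\) = \Psi_n + \sum_{i=0}^N c_i^n \mz^i_{\ep_n^{\alpha}\lambda_n, \sigma_n}.
\]
First I would show that the coefficients $c_i^n$ are negligible: testing this identity against $\mz^j_{\ep_n^{\alpha}\lambda_n, \sigma_n}$, using $\Phi_n, \Psi_n \in (K^{\ep_n})^{\perp}$, the almost-orthogonality relation \eqref{eq_lin_ortho} from Lemma \ref{lemma_lin_ortho}, and the bound $\|i^*_{\tf}(i(g'_{\ep_n}(\mw)\Phi_n))\|_{\tf} \le C$ coming from Lemma \ref{lemma_trace} together with the fact that $g'_{\ep_n}(\mw) \sim p\mw^{p-1}$ concentrates, one deduces $\sum_i |c_i^n| \cdot \delta_n^{-1} \to 0$ where $\delta_n = \ep_n^{\alpha}\lambda_n$; hence $\|\sum_i c_i^n \mz^i\|_{\tf,\ep_n} \to 0$. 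Consequently $\Phi_n - i^*_{\tf}(i(g'_{\ep_n}(\mw)\Phi_n)) \to 0$ in $\mh_{\ep_n}$.

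The second and central step is a blow-up (rescaling) analysis. Using the exponential chart at $\sigma_n$ and the Fermi coordinates, transplant $\Phi_n$ to $\mr^{N+1}_+$ and rescale: set $\widetilde{\Phi}_n(z) := \delta_n^{(N-2s)/2}\Phi_n(\delta_n z)$ read in these coordinates and cut off away from the concentration scale. The scaling invariance \eqref{eq_scale_inv}, the metric expansion of Lemma \ref{lemma_metric_exp}, and the smallness of $E(\rho)$ near the boundary (from \eqref{eq_E_local_2}, recalling $H = 0$ when $s \ge 1/2$ and the weighted estimates for $s < 1/2$ as in Lemma \ref{lem-equiv}) show that $\{\widetilde{\Phi}_n\}$ is bounded in $D^1(\mr^{N+1}_+; t^{1-2s})$. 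Extract a weak limit $\widetilde{\Phi}_{\infty}$. Passing to the limit in the equation $\Phi_n = i^*_{\tf}(i(g'_{\ep_n}(\mw)\Phi_n)) + o(1)$, and using that $\mw_{\delta_n, \sigma_n}$ rescales to the standard bubble $W_1$ while the lower-order terms ($\tf$, $E(\rho)$, metric corrections) vanish in the limit by the decay estimates of Lemma \ref{lemma_decay}, one finds that $\widetilde{\Phi}_{\infty}$ is a bounded solution of the linearized limit problem \eqref{eq_lin_pro} with $\delta = 1$. By the nondegeneracy theorem of D\'avila--del Pino--Sire quoted in Subsection \ref{subsec_trace_ineq}, $\widetilde{\Phi}_{\infty}$ is a linear combination of $Z^0_1, \cdots, Z^N_1$. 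But the orthogonality conditions $\la \Phi_n, \mz^i_{\delta_n,\sigma_n}\ra_{\tf} = 0$ pass to the limit (again via Lemma \ref{lemma_lin_ortho} to identify the limiting bilinear form with $p\int_{\mr^N} w_1^{p-1} z^i_1 z^j_1\,dx$), forcing $\widetilde{\Phi}_{\infty}$ to be orthogonal to all the $Z^i_1$, hence $\widetilde{\Phi}_{\infty} \equiv 0$.

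The final step is to derive a contradiction with $\|\Phi_n\|_{\tf,\ep_n} = 1$. From $\Phi_n = i^*_{\tf}(i(g'_{\ep_n}(\mw)\Phi_n)) + o(1)$ one gets, testing against $\Phi_n$ itself,
\[
\|\Phi_n\|_{\tf}^2 = \int_M g'_{\ep_n}(\mw_{\delta_n,\sigma_n})\,\phi_n^2\, dv_{\hh} + o(1),
\]
where $\phi_n = i(\Phi_n)$. The right-hand side, after rescaling, is $p\int_{\mr^N} w_1^{p-1}\widetilde{\phi}_n^2\,dx + o(1)$ up to the supercritical correction $\ep_n$ (which is absorbed using $\ep_n^{-\ep_n} = O(1)$ and the $L^{q_{\ep_n}}$ control built into $\|\cdot\|_{\tf,\ep_n}$); since $\widetilde{\phi}_n \rightharpoonup \widetilde{\Phi}_{\infty}|_{t=0} = 0$ and $w_1^{p-1}$ decays, the compactness of the weighted trace embedding on bounded sets (stated after \eqref{eq_trace_mf}) gives that this integral tends to $0$. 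In the supercritical case one must additionally check that the $L^{q_{\ep_n}}(M)$ part of $\|\Phi_n\|_{\tf,\ep_n}$ also tends to $0$, which follows by interpolation exactly as in \eqref{eq_error_1} of Lemma \ref{lemma_error_est} combined with Lemma \ref{lemma_trace}. Hence $\|\Phi_n\|_{\tf,\ep_n} \to 0$, contradicting $\|\Phi_n\|_{\tf,\ep_n} = 1$. This proves \eqref{eq_lin_op_est}.

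\medskip
I expect the main obstacle to be the blow-up step on the \emph{manifold}: one must show that all the geometric perturbations — the metric corrections from Lemma \ref{lemma_metric_exp}, the potential $E(\rho)$ which is only $O(\rho^{-2s})$ when $s < 1/2$ and $H \ne 0$, and the cut-off errors — are genuinely lower order after rescaling, uniformly in $(\lambda_n, \sigma_n)$, so that the limiting equation is exactly the flat linearized problem \eqref{eq_lin_pro}. This requires careful use of the weighted Hardy/trace inequalities \eqref{eq_bdry_est}, the equivalence of norms in Lemma \ref{lem-equiv}, and the decay estimates of Lemma \ref{lemma_decay}; the supercritical bookkeeping with the auxiliary norm $\|\cdot\|_{\tf,\ep}$ is a secondary technical nuisance handled by interpolation as in Lemma \ref{lemma_error_est}.
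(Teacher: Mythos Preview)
Your proposal is correct and follows essentially the same contradiction/blow-up scheme as the paper: negate the estimate, show the Lagrange multipliers $c_i^n$ are negligible, rescale to the flat linearized problem \eqref{eq_lin_pro}, invoke the D\'avila--del Pino--Sire nondegeneracy together with the limiting orthogonality to force $\widetilde{\Phi}_\infty=0$, and then test against $\Phi_n$ to contradict $\|\Phi_n\|_{\tf,\ep}=1$. Two small remarks: (i) in the paper the bound $c_i^n = o(\delta_n)$ is obtained \emph{after} introducing the rescaled sequence $\widetilde{\Phi}_n$ and using the trace compactness to pass $\int_M g_{\ep}'(\mw)\mz^j\Phi_n$ to the limit (where it vanishes by the orthogonality \eqref{eq_lin_op_est_2}), rather than beforehand from a rough operator bound, so your two steps are really intertwined; (ii) for the supercritical $L^{q_{\ep}}$ piece the paper does not use the interpolation trick of \eqref{eq_error_1} but applies Lemma \ref{lemma_trace} directly with $q=q_{\ep}$ and then H\"older's inequality (see \eqref{eq_lin_3}--\eqref{eq_lin_2}), which is a bit cleaner since it avoids having to control $\|\Phi_n\|_{L^{2(p+1)}(M)}$.
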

\begin{proof}
We only inspect the case when $g_{\ep}(u) = u_+^{p \pm \ep}$ (and $\tf = f$).
The other case, namely, when $g_{\ep}(u) = u_+^p - \ep fu$ (and $\tf = 0$) is covered in a parallel way.

\medskip
Assume that \eqref{eq_lin_op_est} does not hold so that there are sequences $\ep_n \to 0$, $\lambda_n \to \lambda_{\infty} \in [\lambda_1^{-1}, \lambda_1]$,
$\delta_n = \ep_n^{\alpha}\lambda_n$, $\sigma_n \to \sigma_{\infty} \in M$,
$\Phi_n \in (K_{\lambda_n, \sigma_n}^{\ep_n})^\perp$ and $\Psi_n = L^{\ep_n}_{\lambda_n, \sigma_n}(\Phi_n)$ with
\begin{equation}\label{eq_lin_op_est_1}
\|\Psi_n\|_{f,\ep} \to 0 \quad \text{and} \quad  \|\Phi_n\|_{f,\ep} = 1 \quad \text{as } n \to \infty.
\end{equation}
We may further assume that $\sigma_{\infty} = 0$ by identifying a neighborhood of $\sigma_{\infty}$ in $M$ and that of the origin in $\mr^N$.
According to \eqref{eq_lin_op_2} and Lemma \ref{lemma_lin_ortho}, it is true that
\[- \delta_n^2 (p \pm \ep) \int_M \mw_\dsn^{p-1 \pm \ep} \mz^j_\dsn \Phi_n dv_{\hh} = \delta_n^2 \la \Psi_n, \mz^j_\dsn \ra_f + \sum_{i=0}^N (c_i)_n \(\beta_i \delta_{ij} + o(1)\)\]
for each $j = 0, \cdots, N$.
Following the assertion in the proof of Lemma \ref{lemma_error_est},
it is possible to regard $\Phi_n$ as a function in $\mr^{N+1}_+$ whose support is included in the small half ball
$\overline{B_{\bg}^+(\sigma_n,3r_0)} \subset \overline{B_{\bg}^+} := \overline{B_{\bg}^+(0,4r_0)}$ satisfying $\|\Phi_n\|_{f} \le C_1$ for a fixed constant $C_1 > 0$.
We define $\wtp_n (z) = \delta_n^{N-2s \over 2} \Phi_n(\delta_n x+\sigma_n,\delta_n t)$ for all $z \in \mr^{N+1}_+$.
Then as in \eqref{eq_scale_inv}, one can check that $\|\wtp_n\|_{\dmr}$ is bounded in $n \in \mathbb{N}$ and in particular $\wtp_n \rightharpoonup \wtp_{\infty}$ weakly in $\dmr$.
Hence the compactness property of the trace operator tells us that $\wtp_n \to \wtp_{\infty}$ strongly in $L_{\text{loc}}^q(\mr^N)$ for any $q < {2N \over N-2s}$ and so
\[- \delta_n^2 (p \pm \ep) \int_M \mw_\dsn^{p-1 \pm \ep} \mz^j_\dsn \Phi_n dv_{\hh} = - \delta_n \(\int_{\mr^N}  p w_1^{p-1} z^j_1 \wtp_{\infty} dx + o(1)\) = o(\delta_n).\]
Here the second equality holds, for the assumption $\Phi_n \in (K_{\lambda_n, \sigma_n}^{\ep_n})^\perp$ gives
\begin{equation}\label{eq_lin_op_est_2}
\begin{aligned}
0 = \delta_n \la \Phi_n, \mz^j_\dsn \ra_f &= \delta_n \kappa_s \int_X \rho^{1-2s} (\nabla \mz^j_\dsn, \nabla \Phi_n)_{\bg} dv_{\bg} + O\(\delta_n^{2s}\)\\
&= \int_{\mr^{N+1}_+} t^{1-2s} \nabla Z^j_1 \cdot \nabla \wtp_{\infty} dx dt + o(1)
= \int_{\mr^N} p w_1^{p-1} z^j_1 \wtp_{\infty} dx + o(1).
\end{aligned}
\end{equation}
Since $\left|\delta_n^2 \la \Psi_n, \mz^j_\dsn \ra_f \right| = o(\delta_n)$ by \eqref{eq_lin_op_est_1}, it follows that
\begin{equation}\label{eq_lin_1}
|(c_i)_n| = o(\delta_n) \quad \text{and} \quad \left\|\sum_{i=0}^N (c_i)_n \mz^i_\dsn\right\|_f = o(1).
\end{equation}
Therefore, if we define $\Xi_n(z) = \delta_n^{-{N-2s \over 2}} \Xi(\delta_n^{-1}(x-\sigma_n),\delta_n^{-1}t)$ for any function $\Xi \in C^{\infty}_c(\mr^{N+1})$
and regard it as a function in the open half ball $B_{\bg}^+ \subset X$, which is possible for $n \in \mathbb{N}$ large enough, we see
\begin{multline*}
\kappa_s \int_{B_{\bg}^+} \left[t^{1-2s} (\nabla \Phi_n, \nabla \Xi_n)_{\bg} + E(t)\Phi_n \Xi_n \right] \sqrt{|\bg|} dxdt + \int_{B_{\hh}} \left[f - (p \pm \ep) \mw_\dsn^{p-1 \pm \ep}\right] \Phi_n \Xi_n \sqrt{|\hh|} dx \\
= \la\Psi_n + \sum_{i=0}^N (c_i)_n \mz^i_\dsn, \Xi_n \ra_f = o(1)
\end{multline*}
where $B_{\hh} := B_{\hh}(0, 4r_0) \subset \mr^N$.
Note that $\{\|\Xi_n\|_f\}_{n=1}^{\infty}$ is bounded and that \eqref{eq_bdry_est} implies
\begin{align*}
\int_{B_{\bg}^+} |E(t)||\Phi_n| |\Xi_n| dxdt &\le C \int_{B_{\bg}^+} t^{-2s}|\Phi_n| |\Xi_n| dxdt
\le C \(\int_{B_{\bg}^+} t^{-2s} \Phi_n^2 dxdt\)^{1 \over 2}\(\int_{B_{\bg}^+} t^{-2s} \Xi_n^2 dxdt\)^{1 \over 2}\\
&\le C \|\Phi_n\|_{f} \cdot \delta^{1 \over 2} \(\int_{\mr^{N+1}_+} t^{-2s} \Xi^2 dxdt\)^{1 \over 2} = o(1)
\end{align*}
for $s \in (0,1/2)$, while it remains to hold that $\int_{B_{\bg}^+} |E(t)||\Phi_n| |\Xi_n| dxdt = o(1)$ when $s \in [1/2, 1)$ and $H = 0$ by a similar reasoning.
Hence by taking $n \to \infty$, we obtain from Lemma \ref{lemma_metric_exp} that
\[\kappa_s \int_{\mr^{N+1}_+} t^{1-2s} \nabla \wtp_{\infty} \cdot \nabla \Xi dxdt = p \int_{\mr^N} w_1^{p-1} \wtp_{\infty} \Xi dx,\]
which means that $\wtp_{\infty}$ is a weak solution of \eqref{eq_lin_pro}.
On the other hand, the $\dmr$-norm of $\wtp_{\infty}$ is finite, so the Moser iteration argument works and it reveals that $\wtp_{\infty}$ is $L^{\infty}(\mr^N)$-bounded (see the proof of Lemma 5.1 in \cite{CKL}).
Thus with \eqref{eq_lin_op_est_1} the linear nondegeneracy result in \cite{DDS}, touched in Subsection \ref{subsec_trace_ineq}, implies $\wtp_{\infty} = 0$ in $\mr^N$.
Now we have that
\[\int_{B_{\hh}} \mw_\dsn^{p-1 \pm \ep} \Phi_n^2 \sqrt{|\hh|} dx = \delta_n^{\mp\({N-2s \over 2}\)\ep} \int_{\mr^N} \chi_1^{p-1 \pm \ep}(\delta_n x) w_1^{p-1 \pm \ep}(x) \wtp_n^2(x) \sqrt{|\hh|}(\delta_n x + \sigma_n) dx = o(1). \]
Putting $\Phi = \Phi_n$ into \eqref{eq_lin_op_2} shows then
\[\|\Phi_n\|_{f} = (p \pm \ep) \int_{B_{\hh}} \mw_\dsn^{p-1 \pm \ep} \Phi_n^2 \sqrt{|\hh|} dx + \la\Psi_n + \sum_{i=0}^N (c_i)_n \mz^i_\dsn, \Phi_n \ra_{f} = o(1),\]
and particularly $\|\Phi_n\|_{L^{p+1}(M)} = o(1)$.
At this point, we claim that $\|\Phi_n \|_{L^{q_{\ep}(M)}} = o(1)$.
Once we verify it, together the previous estimate, it will yield that $\|\Phi_n\|_{f,\ep} \to 0$ as $n \to \infty$.
Therefore we will reach a contradiction and our desired inequality \eqref{eq_lin_op_est} should have the validity.
Since the assertion clearly holds in the subcritical or critical cases, it suffices to consider the supercritical case only.
In this situation, by applying Lemma \ref{lemma_trace} and using \eqref{eq_lin_op_2}, \eqref{eq_lin_op_est_1} and \eqref{eq_lin_1}, we get
\begin{equation}\label{eq_lin_3}
\begin{aligned}
\left\| \Phi_n\right\|_{L^{q_{\ep}}(M)} &\le \left\|  i^*_f\(i\(g'_{\ep}\(\mw_\dsn\)\Phi_n\)\) \right\|_{L^{q_{\ep}}(M)} + \left\| \Phi_n - i^*_f\(i\(g'_{\ep}\(\mw_\dsn\)\Phi_n\)\) \right\|_{L^{q_{\ep}}(M)}
\\
&\le \left\|i\(g'_{\ep}\(\mw_\dsn\)\Phi_n\) \right\|_{L^{\frac{Nq_{\ep}}{N+2sq_{\ep}}}(M)} + o(1).
\end{aligned}
\end{equation}
According to H\"older's inequality,
\begin{equation}\label{eq_lin_2}
\left\| i\(g'_{\ep}\(\mw_\dsn\)\Phi_n\) \right\|_{L^{\frac{Nq_{\ep}}{N+2sq_{\ep}}}(M)} \le \left\| g'_{\ep}(w_\dsn) \right\|_{L^{\tilde{r}_{\ep}}(M)} \|\Phi_n\|_{L^{p+1}(M)},
\end{equation}
where $\frac{1}{\tilde{r}_{\ep}}+ \frac{1}{p+1} = \frac{N+2sq_{\ep}}{Nq_{\ep}}$.
Since $\tilde{r}_{\ep} = \frac{N}{2s}+O(\ep)$, we have $\left\| g'_{\ep}(w_\dsn) \right\|_{L^{\tilde{r}_{\ep}}(M)} = O(1)$.
Thus we get from  \eqref{eq_lin_2} that $\left\| i\(g'_{\ep}\(\mw_\dsn\)\Phi_n\) \right\|_{L^{\frac{Nq_{\ep}}{N+2sq_{\ep}}}(M)} = o(1)$, which gives $\|\Phi_n \|_{L^{q_{\ep}(M)}} = o(1)$ with \eqref{eq_lin_3}.
\end{proof}

\noindent As a result, we can construct a solution of \eqref{eq_lin_op_2}.
\begin{prop}\label{prop_lin_ortho}
Given $N > 2s$, fix a point $(\ls) \in (\lambda_1^{-1}, \lambda_1) \times M$ and a small parameter $\ep > 0$ such that Lemma \ref{lemma_lin_op_bdd} holds.
For each $\Psi \in (K_\ls^{\ep})^\perp$, there exists a unique solution $(\Phi, (c_0, \cdots, c_N)) \in (K_\ls^{\ep})^\perp \times \mr^{N+1}$
to equation \eqref{eq_lin_op_2} such that estimate \eqref{eq_lin_op_est} is satisfied.
\end{prop}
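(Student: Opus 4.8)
The \emph{plan} is to read \eqref{eq_lin_op_2} as the single operator equation $L^\ep_\ls(\Phi)=\Psi$ posed on the Banach space $\((K_\ls^\ep)^\perp,\ \|\cdot\|_{\tf,\ep}\)$ and to invert $L^\ep_\ls$ by the Riesz--Schauder theory. Write $L^\ep_\ls=\mathrm{Id}-T^\ep_\ls$ with
\[T^\ep_\ls(\Phi):=(\Pi_\ls^\ep)^\perp\, i^*_{\tf}\(i\(g'_\ep(\mw_\els)\,\Phi\)\),\qquad \Phi\in(K_\ls^\ep)^\perp.\]
The first --- and essentially the only --- structural point is that $T^\ep_\ls$ is a compact operator on $(K_\ls^\ep)^\perp$. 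In the subcritical and critical cases, where $\mh_\ep=\hx$, one uses that the trace $i:\hx\to L^{p+1}(M)$ is bounded while $\hx\hookrightarrow L^q(M)$ is compact for every $q<p+1$; splitting $g'_\ep(\mw_\els)=g_1+g_2$ with $g_1\in L^\infty(M)$ and $\|g_2\|_{L^{(p+1)/(p-1)}(M)}$ as small as we like (possible since $g'_\ep(\mw_\els)\in L^{(p+1)/(p-1)}(M)$), one turns any weakly convergent sequence in $\hx$ into a strongly convergent sequence for $g'_\ep(\mw_\els)\,i(\cdot)$ in $L^{(p+1)/p}(M)$, and the bounded map $i^*_{\tf}:L^{(p+1)/p}(M)\to\hx$ carries this back; composing with the bounded projection $(\Pi_\ls^\ep)^\perp$ then gives compactness. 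In the supercritical case one argues more directly: $i:\mh_\ep\to L^{q_\ep}(M)$ is bounded, multiplication by $g'_\ep(\mw_\els)$ --- whose $L^{N/(2s)}(M)$-norm is $O(1)$ uniformly in $(\ls)$ and small $\ep$ --- maps $L^{q_\ep}(M)$ into $L^r(M)$ for some $r>\tfrac{2N}{N+2s}$ by H\"older's inequality, the strict inequality coming from $q_\ep>p+1$, and $i^*_{\tf}:L^r(M)\to\mh_\ep$ is compact by Corollary \ref{cor_adj}. (For \eqref{eq_main_21} the derivative $g'_\ep$ has the additional bounded summand $-\ep f$, which changes nothing.)

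With compactness established, the Riesz--Schauder alternative on the Banach space $(K_\ls^\ep)^\perp$ shows that $L^\ep_\ls=\mathrm{Id}-T^\ep_\ls$ is bijective as soon as it is injective, and then $(L^\ep_\ls)^{-1}$ is bounded. Injectivity, together with the uniform quantitative bound, is exactly the content of Lemma \ref{lemma_lin_op_bdd}: the estimate $\|L^\ep_\ls(\Phi)\|_{\tf,\ep}\ge C\|\Phi\|_{\tf,\ep}$ forces $\ker L^\ep_\ls=\{0\}$, and once surjectivity is known it yields $\|(L^\ep_\ls)^{-1}\|\le C^{-1}$ uniformly for $(\ls)\in(\lambda_1^{-1},\lambda_1)\times M$ and small $\ep$. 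Consequently, given $\Psi\in(K_\ls^\ep)^\perp$ we set $\Phi:=(L^\ep_\ls)^{-1}\Psi\in(K_\ls^\ep)^\perp$, which satisfies $\|\Phi\|_{\tf,\ep}\le C\|\Psi\|_{\tf,\ep}$, i.e.\ \eqref{eq_lin_op_est}.

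Finally, to recover the multipliers and upgrade $L^\ep_\ls(\Phi)=\Psi$ to the full system \eqref{eq_lin_op_2}, note that, by the definition of $L^\ep_\ls$, the element $\Phi-i^*_{\tf}\(i\(g'_\ep(\mw_\els)\Phi\)\)-\Psi$ lies in $K_\ls^\ep=\text{Span}\{\mz^i_\els:i=0,\cdots,N\}$, hence equals $\sum_{i=0}^N c_i\,\mz^i_\els$ for a unique tuple $(c_0,\cdots,c_N)\in\mr^{N+1}$; uniqueness of the coefficients uses the linear independence of the $\mz^i_\els$'s, which follows from the almost-orthogonality relation \eqref{eq_lin_ortho} of Lemma \ref{lemma_lin_ortho} (the Gram matrix $\delta^2\la\mz^i_\els,\mz^j_\els\ra_{\tf}$ is diagonally dominant for small $\ep$). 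The pair $(\Phi,(c_0,\cdots,c_N))$ then solves \eqref{eq_lin_op_2}, and its uniqueness follows from that of $\Phi$ --- apply $(\Pi_\ls^\ep)^\perp$ to any solution and use $\Psi\in(K_\ls^\ep)^\perp$ to see that its $(K_\ls^\ep)^\perp$-component must equal $(L^\ep_\ls)^{-1}\Psi$ --- and again from the linear independence of the $\mz^i_\els$'s. Since the genuinely hard analysis, the uniform a priori bound, was already done in Lemma \ref{lemma_lin_op_bdd}, the only step here that demands some care is the verification of compactness of $T^\ep_\ls$, in particular the exponent bookkeeping that keeps $r>\tfrac{2N}{N+2s}$ in the supercritical regime.
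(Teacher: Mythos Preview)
Your proof is correct and follows essentially the same approach as the paper: establish that $T^\ep_\ls$ is compact, then apply the Fredholm alternative together with the a priori bound of Lemma \ref{lemma_lin_op_bdd}. The only cosmetic difference is in the compactness verification---the paper handles all cases at once by simply observing that $i(\mw_\els)\in L^\infty(M)$, so that multiplication by $g'_\ep(\mw_\els)$ is bounded on $L^{\zeta_3}(M)$ for $\zeta_3=\tfrac{2N^2}{N^2+4s^2}>\tfrac{2N}{N+2s}$, and then invokes Corollary \ref{cor_adj}---whereas you split into subcritical/critical and supercritical cases with a somewhat more elaborate argument; both are valid.
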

\begin{proof}
Firstly let us show that the linear map $L^\ep_\ls$ on $\mh_{\ep}$ is the sum of the identity and a compact operator,
that is to say, the map $\Phi \mapsto (\Pi_\ls^{\ep})^\perp i^*_{\tf}(i(g'_{\ep}(\mw_\els)\Phi))$ for $\Phi \in \mh_{\ep}$ is compact.
Denote $\zeta_3 = {2N^2 \over N^2+4s^2}$.
Then, by Corollary \ref{cor_adj}, we observe that $i^*_{\tf}: L^{\zeta_3}(M) \to \mh_{\ep}$ is a compact operator given $\ep > 0$ small.
Furthermore, since $i(\mw_{\els})$ is in $L^{\infty}(M)$, it holds that $i(g'_{\ep}(\mw_\els)\Phi) \in L^{\zeta_3}(M)$ for any $\Phi \in \mh_{\ep}$.
Consequently, our assertion is true and the proposition follows from a standard argument utilizing the previous lemma and the Fredholm alternative.
\end{proof}

\subsection{Derivation of a solution to the intermediate problem}\label{subsec_inter}
From the unique existence result for the linear problem \eqref{eq_lin_op_2} stated in Proposition \ref{prop_lin_ortho},
we are now able to derive that \eqref{eq_inter_1} is solvable for any given $(\ls) \in (\lambda_1^{-1}, \lambda_1) \times M$ provided $\ep > 0$ sufficiently small.
Let us rewrite problem \eqref{eq_inter_1} as
\begin{multline}\label{eq_inter_3}
L^\ep_\ls(\Phi) = - E^\ep_\ls + N^\ep_\ls(\Phi) := -\(\Pi_{\ls}^{\ep}\)^{\perp}\(\mw_\els - i_{\tf}^*\(g_{\ep}(\mw_\els)\)\) \\
+ \(\Pi_{\ls}^{\ep}\)^{\perp} \(i_{\tf}^*\(g_{\ep}(\mw_\els + \Phi) - g_{\ep}(\mw_\els) - g'_{\ep}(\mw_\els)\Phi\)\).
\end{multline}
\begin{prop}\label{prop_Phi}
Under the assumption of Proposition \ref{prop_lin_ortho} equation \eqref{eq_inter_3} possesses a unique solution $\Phi = \Phi_\els \in (K_{\ls}^{\ep})^{\perp}$ such that
\begin{equation}\label{eq_phi_norm}
\|\Phi_\els\|_{\tf, \ep} = O\(\ep^{\gamma}\)
\end{equation}
where the exponent $\gamma$ is defined in \eqref{eq_error_est_0}.
\end{prop}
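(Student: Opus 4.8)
The plan is to recast \eqref{eq_inter_3} as a fixed point problem and solve it by the contraction mapping principle. By Proposition~\ref{prop_lin_ortho} the operator $L^\ep_\ls$ is invertible on $(K_\ls^\ep)^\perp$ with $\|(L^\ep_\ls)^{-1}\| \le C$ uniformly for $(\ls) \in (\lambda_1^{-1},\lambda_1)\times M$ and $\ep$ small, so \eqref{eq_inter_3} is equivalent to the equation $\Phi = T^\ep_\ls(\Phi)$ with
\[
T^\ep_\ls(\Phi) := (L^\ep_\ls)^{-1}\bigl(-E^\ep_\ls + N^\ep_\ls(\Phi)\bigr), \qquad \Phi \in (K_\ls^\ep)^\perp.
\]
I would show that, for a suitably large constant $C_0 > 0$, the map $T^\ep_\ls$ sends the closed ball $\mathcal{B}_{C_0\ep^\gamma} := \{\Phi \in (K_\ls^\ep)^\perp : \|\Phi\|_{\tf,\ep} \le C_0\ep^\gamma\}$ into itself and is a contraction there; the unique fixed point $\Phi_\els$ is then the desired solution of \eqref{eq_inter_3} and automatically obeys \eqref{eq_phi_norm}.

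Two ingredients feed this argument. First, the error bound is already in hand: since the projection $(\Pi^\ep_\ls)^\perp$ is an $\|\cdot\|_{\tf,\ep}$-contraction, Lemma~\ref{lemma_error_est} gives $\|E^\ep_\ls\|_{\tf,\ep} = O(\ep^\gamma)$. Second, one needs a quadratic estimate for $N^\ep_\ls$. Using the pointwise inequality
\[
\bigl|g_\ep(a+b)-g_\ep(a)-g'_\ep(a)b\bigr| \le C\begin{cases} |b|^{p\pm\ep}, & p\pm\ep \le 2,\\ a^{p-1\pm\ep}\,|b|^2 + |b|^{p\pm\ep}, & p\pm\ep > 2,\end{cases}
\]
valid after the cut-off $u\mapsto u_+$ (the linear part $-\ep f u$ cancelling in the second difference), together with the trace inequality \eqref{eq_trace_mf}, H\"older's inequality and the continuity of $i^*_{\tf}:L^{(p+1)/p}(M)\to\hx$, one obtains for the subcritical and critical problems the bounds $\|N^\ep_\ls(\Phi)\|_{\tf,\ep} \le C(\|\Phi\|_{\tf,\ep}^{\min\{p\pm\ep,\,2\}} + \|\Phi\|_{\tf,\ep}^{p\pm\ep})$ and $\|N^\ep_\ls(\Phi_1)-N^\ep_\ls(\Phi_2)\|_{\tf,\ep} \le o(1)\,\|\Phi_1-\Phi_2\|_{\tf,\ep}$ on $\mathcal{B}_{C_0\ep^\gamma}$. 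For the supercritical problem \eqref{eq_main_12} the same scheme is run in $\mh_\ep$: the $L^{q_\ep}(M)$-part of $\|N^\ep_\ls(\Phi)\|_{\tf,\ep}$ is controlled by replacing the boundedness of $i^*_{\tf}$ with the sharper mapping property of Lemma~\ref{lemma_trace} (equivalently Corollary~\ref{cor_adj}), namely $\|i(i^*_{\tf}(v))\|_{L^{q_\ep}(M)} \le C\|v\|_{L^{q_\ep/(p+\ep)}(M)}$, and then bounding the $L^{q_\ep/(p+\ep)}(M)$-norm of the quadratic remainder by powers of $\|\Phi\|_{\tf,\ep}$ via H\"older and $\mh_\ep \hookrightarrow L^{q_\ep}(M)$.

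Putting these together, on $\mathcal{B}_{C_0\ep^\gamma}$ one has $\|T^\ep_\ls(\Phi)\|_{\tf,\ep} \le C\|E^\ep_\ls\|_{\tf,\ep} + C\|N^\ep_\ls(\Phi)\|_{\tf,\ep} \le C\ep^\gamma + C(C_0\ep^\gamma)^{\min\{p\pm\ep,2\}} \le C_0\ep^\gamma$ once $\ep$ is small, because $\min\{p\pm\ep,2\} > 1$ renders the nonlinear term of strictly higher order in $\ep^\gamma$; the same exponent gap makes $T^\ep_\ls$ a contraction on the ball, so Banach's fixed point theorem yields a unique $\Phi_\els \in (K_\ls^\ep)^\perp$ with $\|\Phi_\els\|_{\tf,\ep} \le C_0 \ep^\gamma$, i.e. \eqref{eq_phi_norm}. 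I expect the main obstacle to be the nonlinear estimate in the supercritical regime: since $p+\ep$ exceeds the critical Sobolev exponent, the $\hx$-based bounds are useless and the whole argument must be carried out in the weighted space $\mh_\ep$, carrying the $L^{q_\ep}(M)$-norm throughout and using Lemma~\ref{lemma_trace} in place of the usual boundedness of the adjoint operator; one must also check that the $\ep$-dependent exponents $p\pm\ep$, $q_\ep$, $r_\ep$, $\tilde r_\ep$ stay in the admissible ranges and that all constants are uniform in $(\ls)$, which is precisely where the dimensional hypotheses $N > \max\{4s,1\}$ for \eqref{eq_main_12} and $N\ge 2$ for \eqref{eq_main_21} (guaranteeing $\gamma > 1/2$, resp.\ $\gamma > 1/(2(1-2s))$) are used.
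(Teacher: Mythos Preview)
Your proposal is correct and follows exactly the paper's approach: define $T^\ep_\ls(\Phi) = (L^\ep_\ls)^{-1}(-E^\ep_\ls + N^\ep_\ls(\Phi))$ and apply the contraction mapping theorem on a ball of radius $C_0\ep^\gamma$, using Lemma~\ref{lemma_error_est} for the error term and Lemma~\ref{lemma_trace} for the nonlinear estimate in the supercritical case. The paper's own proof is in fact only a few lines long and simply states that ``a direct computation using Lemmas~\ref{lemma_trace} and~\ref{lemma_error_est}'' shows $T^\ep_\ls$ is a contraction on $\mathcal{B} = \{\Phi \in (K^\ep_\ls)^\perp : \|\Phi\|_{\tf,\ep} \le M\ep^\gamma\}$; you have spelled out the details the paper leaves implicit. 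One minor quibble: $(\Pi^\ep_\ls)^\perp$ is orthogonal with respect to $\langle\cdot,\cdot\rangle_{\tf}$, hence a contraction for $\|\cdot\|_{\tf}$, but for the full norm $\|\cdot\|_{\tf,\ep}$ (which carries the extra $L^{q_\ep}$-piece) it is merely uniformly bounded---this is all you need, and nothing in your argument actually uses norm~$\le 1$.
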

\begin{proof}
We define an operator $T_{\ls}^{\ep}: (K_{\ls}^{\ep})^{\perp} \to (K_{\ls}^{\ep})^{\perp}$ by
\[T_{\ls}^{\ep}(\Phi) = \(L_{\ls}^{\ep}\)^{-1}\(- E^\ep_\ls + N^\ep_\ls(\Phi)\).\]
A direct computation using Lemmas \ref{lemma_trace} and \ref{lemma_error_est} shows that it is a contraction map on the set
\[\mathcal{B} = \left\{\Phi \in (K_{\ls}^{\ep})^{\perp}: \|\Phi\|_{\tf,\ep} \le M \ep^{\gamma} \right\} \quad \text{for some large } M > 0.\]
Consequently, it admits a unique fixed point $\Phi_\els \in \mathcal{B}$, which becomes a solution to \eqref{eq_inter_3}.
This completes the proof.
\end{proof}

\section{Finite dimensional reduction}\label{sec_red}
We keep using notations $g_{\ep}(u)$, $\tf$ in \eqref{eq_gep}.
Define also $G_{\ep}(u) = \int_0^t g_{\ep}(t)dt$.

\medskip
It is notable that equations \eqref{eq_main_12}-\eqref{eq_main_21} have the variational structure.
In other words, $U \in \mh_{\ep}$ is a weak solution of \eqref{eq_main_12}-\eqref{eq_main_21} if it is a critical point of the energy functional
\[I_{\ep}(U) := {\kappa_s \over 2}\int_X \(\rho^{1-2s}|\nabla U|_{\bg}^2 + E(\rho)U^2\) dv_{\bg} + {1 \over 2}\int_M \tf U^2 dv_{\hh} - \int_M G_{\ep}(U) dv_{\hh}\]
where $dv_{\bg}$ and $dv_{\hh}$ denote the volume forms on $(X, \bg)$ and its boundary $(M, \hh)$, respectively.
Based on the previous observations, we define a reduced energy functional by
\begin{equation}\label{eq_Jep}
J_{\ep}(\ls) = I_{\ep}\(\mw_\els + \Phi_\els\)
\end{equation}
for any $(\ls) \in (0,\infty) \times M$ where the exponent $\alpha > 0$ is determined in \eqref{eq_alpha} and $\Phi_\els$ denotes the function determined in Proposition \ref{prop_Phi}.

\medskip
The next proposition claims that the well-known finite dimension reduction procedure is still applicable in our setting.
\begin{prop}\label{prop_red}
Assume that $\ep > 0$ is small enough.
Then the reduced energy $J_{\ep}: (0,\infty) \times M \to \mr$ is continuously differentiable.
Moreover, if $J_{\ep}'(\lambda_{\ep}, \sigma_{\ep}) = 0$ for some element $(\lambda_{\ep}, \sigma_{\ep}) \in (0,\infty) \times M$,
then the function $\mw_\elep + \Phi_\elep$ solves problems \eqref{eq_main_12}-\eqref{eq_main_21} (according to the choice of the nonlinearity $g_{\ep}$).
Its trace on $M$ is in $C^{1,\beta}(M)$ for some $\beta \in (0,1)$ determined by $N$ and $s$.
\end{prop}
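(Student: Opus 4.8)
The plan is to follow the classical Lyapunov--Schmidt scheme: once the intermediate problem \eqref{eq_inter_1} is solved, a genuine solution of \eqref{eq_main_12}--\eqref{eq_main_21} is obtained precisely when the remaining ``tangential'' component $\Pi_{\ls}^{\ep}[(\mw_\els+\Phi_\els) - i_{\tf}^*(i(g_\ep(\mw_\els+\Phi_\els)))]$ vanishes, i.e.\ when \eqref{eq_inter_2} holds. The key algebraic identity to establish is that $(\ls)$ is a critical point of $J_\ep$ \emph{if and only if} the coefficients $c_i$ appearing in \eqref{eq_inter_3} (with $\Phi=\Phi_\els$) all vanish, in which case $L^\ep_\ls(\Phi_\els) = -E^\ep_\ls + N^\ep_\ls(\Phi_\els)$ forces $\mw_\els+\Phi_\els$ to solve the full equation without Lagrange multipliers. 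First I would record that $I_\ep$ is $C^1$ on $\mh_\ep$ (its quadratic part is the square of the equivalent norm from Lemma~\ref{lem-equiv}, and $U\mapsto \int_M G_\ep(U)\,dv_{\hh}$ is $C^1$ by the trace embedding together with the choice of $q_\ep$ in \eqref{eq_q_ep}, which is exactly what makes the supercritical nonlinearity differentiable on $\mh_\ep$).

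Next I would address the regularity of $(\ls)\mapsto \Phi_\els$. The map $(\ls)\mapsto \mw_\els$ is $C^1$ into $\mh_\ep$ (differentiating \eqref{eq_first_approx} in $\delta$ and in $\sigma$ produces, up to the cutoff, the functions $\mz^i_\els$, whose norms are controlled via Lemma~\ref{lemma_decay}), and $(\ls)\mapsto \Phi_\els$ is then $C^1$ by the implicit function theorem applied to the fixed-point equation $\Phi = T_\ls^\ep(\Phi)$ of Proposition~\ref{prop_Phi}: the relevant linearized operator is $\mathrm{Id} - D_\Phi(T_\ls^\ep)$, which is invertible because $L^\ep_\ls$ is uniformly invertible (Lemma~\ref{lemma_lin_op_bdd}) and $D_\Phi N^\ep_\ls(\Phi_\els)$ is small by the estimate \eqref{eq_phi_norm}. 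Consequently $J_\ep = I_\ep(\mw_\els+\Phi_\els)$ is $C^1$ on $(0,\infty)\times M$ by the chain rule.

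Then comes the heart of the reduction. Differentiating $J_\ep$ and using $\langle I_\ep'(\mw_\els+\Phi_\els), \partial_\Phi \rangle = 0$ for variations $\partial_\Phi$ in $(K_\ls^\ep)^\perp$ (which is exactly what \eqref{eq_inter_1}/\eqref{eq_inter_3} encodes), one gets
\[
\partial_{(\ls)} J_\ep(\ls) = \big\langle I_\ep'(\mw_\els+\Phi_\els),\ \partial_{(\ls)}(\mw_\els+\Phi_\els)\big\rangle = \sum_{i=0}^N c_i(\ls)\,\big\langle \mz^i_\els,\ \partial_{(\ls)}(\mw_\els+\Phi_\els)\big\rangle_{\tf},
\]
where I have used that $I_\ep'(\mw_\els+\Phi_\els) = \sum_i c_i \mz^i_\els$ in the $\langle\cdot,\cdot\rangle_{\tf}$-duality by \eqref{eq_inter_3}. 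Since $\Phi_\els\perp \mz^i_\els$, differentiating that orthogonality and using the almost-orthogonality \eqref{eq_lin_ortho} together with $\|\Phi_\els\|_{\tf,\ep}=O(\ep^\gamma)$, the matrix $\big(\langle \mz^i_\els, \partial_{(\ls)}(\mw_\els+\Phi_\els)\rangle_{\tf}\big)$ is, after the natural rescaling by powers of $\delta$, a small perturbation of an invertible constant matrix built from $\int_{\mr^N} p\,w_1^{p-1} z^i_1 z^j_1\,dx$. Hence $J_\ep'(\lambda_\ep,\sigma_\ep)=0$ implies $c_i(\lambda_\ep,\sigma_\ep)=0$ for all $i$, and \eqref{eq_inter_3} then reads $L^\ep_\ls(\Phi_\els)=-E^\ep_\ls+N^\ep_\ls(\Phi_\els)$ with no multiplier term, i.e.\ $\mw_\elep+\Phi_\elep = i_{\tf}^*(i(g_\ep(\mw_\elep+\Phi_\elep)))$, which is the weak formulation of \eqref{eq_main_12}--\eqref{eq_main_21}. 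Positivity follows because $g_\ep$ was truncated to $u_+$ and a maximum-principle/test-function argument (testing against the negative part, using coercivity \eqref{eq_coercive}) shows the solution is nonnegative, hence positive by the strong maximum principle for $P^s_{\hh}$ via Proposition~\ref{prop_ext_2}.

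Finally, for the $C^{1,\beta}$ regularity of the trace $u_\ep$: a bootstrap using Lemma~\ref{lemma_trace} starting from $u_\ep\in L^{q_\ep}(M)$ gives $u_\ep\in L^q(M)$ for every finite $q$; then $v_\ep := u_\ep^{p\pm\ep} - \tf u_\ep$ (resp.\ $u_\ep^p-\ep f u_\ep$) lies in every $L^q(M)$, so $U_\ep = i_{\tf}^*(v_\ep)$ solves \eqref{eq_adjoint} with right-hand side in $L^\infty$ after one more step, and the De Giorgi--Nash--Moser theory for degenerate equations with $A_2$-weight $\rho^{1-2s}$ (together with the conversion $E(\rho^*)=0$ from Proposition~\ref{prop_ext_2}) yields $U_\ep\in C^{0,\alpha}$ up to the boundary, hence $u_\ep\in C^{0,\alpha}(M)$; the Schauder-type estimate for the weighted Neumann problem then upgrades this to $u_\ep\in C^{1,\beta}(M)$ with $\beta=\beta(N,s)$. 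I expect the main obstacle to be the invertibility (after rescaling) of the derivative matrix $\big(\langle \mz^i_\els, \partial_{(\ls)}(\mw_\els+\Phi_\els)\rangle_{\tf}\big)_{ij}$ uniformly in small $\ep$ — this requires differentiating $\mw_\els$ carefully in the manifold coordinates, controlling the cutoff and the $E(\rho)$-term contributions via Lemma~\ref{lemma_decay}, and absorbing the $\Phi_\els$-derivative terms using \eqref{eq_phi_norm}; the supercritical case additionally demands that all of this be carried out in the $\|\cdot\|_{\tf,\ep}$-norm rather than just $\|\cdot\|_{\tf}$.
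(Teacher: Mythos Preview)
Your proposal is correct and follows essentially the same approach as the paper: implicit function theorem for the $C^1$-dependence of $\Phi_\els$, the identification of $I_\ep'(\mw_\els+\Phi_\els)$ with $\sum_i c_i\mz^i_\els$ and the ensuing matrix-invertibility argument to pass from $J_\ep'=0$ to $c_i=0$, the test-against-negative-part plus Hopf lemma (via Proposition~\ref{prop_ext_2}) for positivity, and a bootstrap for regularity. The paper compresses the reduction step you spell out into the single sentence ``it is a standard step to show that $J_\ep'(\lambda_\ep,\sigma_\ep)=0$ implies $I_\ep'(\mw_\elep+\Phi_\elep)=0$'', and for the regularity it cites directly the $L^\infty$-bound and Schauder-type results from \cite{GQ,CKL} rather than sketching the Moser iteration and De Giorgi--Nash--Moser steps as you do; but the underlying arguments are the same.
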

\begin{proof}
Fix $\ep > 0$ and define a linear operator
\[\mathcal{L}^{\ep}((\ls),U) = U + \(\Pi_\ls^{\ep}\)^{\perp}\left[\mw_{\els} - i^*_{\tf}(i(g_{\ep}(\mw_{\els}+U))) \right]\]
for $((\ls),U) \in (0,\infty) \times M \times \mh_{\ep}$.
Then $\mathcal{L}^{\ep}((\ls),\Phi_{\els}) = 0$ and
\[{\pa \mathcal{L}^{\ep} \over \pa U}((\ls),U) = U - \(\Pi_\ls^{\ep}\)^{\perp}\left[i^*_{\tf}\(i\(g_{\ep}'\(\mw_{\els}\)U\)\) \right].\]
By elliptic regularity, $i(g_{\ep}'(\mw_{\els})\Phi_{\els}) \in L^q(M)$ for some $q > {2N \over N+2s}$. (Refer to the latter part of this proof.)
Hence we know from Corollary \ref{cor_adj} that ${\pa \mathcal{L}^{\ep} \over \pa U}((\ls),\Phi_{\els}): \mh_{\ep} \to \mh_{\ep}$ is a Fredholm operator of index 0.
Moreover, using \eqref{eq_phi_norm}, one can check that it is also injective.
Therefore ${\pa \mathcal{L}^{\ep} \over \pa U}((\ls),\Phi_{\els})$ is invertible and the implicit function theorem shows that the mapping $(\ls) \in (0,\infty) \times M \mapsto \Phi_\els \in \mh_{\ep}$ is $C^1$.
This leads that $J_{\ep}$ is a $C^1$ map.
Furthermore it is a standard step to show that $J_{\ep}'(\lambda_{\ep}, \sigma_{\ep}) = 0$ implies $I_{\ep}'\(\mw_\elep + \Phi_\elep\) = 0$.

In the rest of the proof, we take account of equations \eqref{eq_main_12}.
The other equation \eqref{eq_main_21} can be dealt with similarly. One has then
\begin{multline*}
{\kappa_s \over 2}\int_X \left[\rho^{1-2s} \(\nabla\(\mw_\elep + \Phi_\elep\), \nabla \Xi\)_{\bg} + E(\rho)\(\mw_\elep + \Phi_\elep\)\Xi\right] dv_{\bg}\\
+ \int_M f\(\mw_\elep + \Phi_\elep\)\Xi dv_{\hh} = (p \pm \ep) \int_M \(\mw_\elep + \Phi_\elep\)_+^{p-1 \pm \ep}\Xi dv_{\hh}
\end{multline*}
for any $\Xi \in \hx$.
Putting $\Xi = (\mw_\elep + \Phi_\elep)_-$ into the above identity and then applying \eqref{eq_coercive} verifies that $\mw_\elep + \Phi_\elep \ge 0$ in $\overline{X}$.
By Propositions \ref{prop_ext} and \ref{prop_ext_2}, equation \eqref{eq_ext_1} is solved by the nonnegative function $U = (\rho^*/\rho)^{\verb"z"-N}(\mw_\elep + \Phi_\elep)$ defined in $X$ and its trace $u \ge 0$ on the boundary $M$.
Also, $U$ is not identically zero since $\|\mw_\elep + \Phi_\elep\|_f \ge \|\mw_\elep\|_f - \|\Phi_\elep\|_f \ge C + O(\ep^{\gamma}) > 0$, and it is strictly positive in $X$,
for \eqref{eq_ext_1} is a uniformly elliptic equation in divergence form away from the boundary.
Suppose now that $u(z_0) = 0$ for a point $z_0 \in M$.
Then by the Hopf lemma \cite[Theorem 3.5]{GQ}, we have $(\rho^*)^{1-2s} \pa_{\rho^*} U > 0$ at $z_0$, while \eqref{eq_ext_2} gives
\[\kappa_s (\rho^*)^{1-2s} \pa_{\rho^*} U = - \pa_{\nu}^s U = Q_{\hh}^s u - P_{\hh}^s u = \(Q_{\hh}^s + f - u^{p \pm \ep}\)u = 0 \quad \text{at } z_0.\]
Therefore a contradiction arises and the functions $U$ and $\mw_\elep + \Phi_\elep$ should be strictly positive in $\overline{X}$.

Finally, if the nonlinearity of the problem is subcritical, then \cite[Theorem 3.4]{GQ} implies that $U$ is a locally bounded function in $X$.
Then the regularity property of $\left.\mw_\elep + \Phi_\elep\right|_{M}$ follows directly by the result of \cite[Proposition 3.2]{GQ}.
If our problem is critical or supercritical one, then the nonlinear term is given by $g_{\ep}(u) = u_+^{\frac{N+2s}{N-2s}+\ep} = u^{\frac{4s}{N-2s}+\ep} \cdot u$ for $\ep \ge 0$.
Note that
\[\frac{N}{2s} \cdot \( \frac{4s}{N-2s}+\ep\) = \frac{2N}{N-2s} +\frac{N}{2s}\ep = q_{\ep}\]
(see \eqref{eq_q_ep}).
Therefore $u \in L^{q_{\ep}}(M)$ means that $u^{\frac{4s}{N-2s}+\ep} \in L^{\frac{N}{2s}}(M)$,
and so one can modify the proof of Lemmas 3.5 and 3.8 in \cite{CKL} slightly to show that $U$ is $L^{\infty}$-bounded.
The regularity of $U$ again follows from \cite[Proposition 3.2]{GQ} now.
\end{proof}

\section{Energy expansion}\label{sec_energy}
\subsection{The $C^0$-estimates}\label{subsec_energy}
We set $d_0 = \int_{\mr^N} w_1^{p+1}dx$, $d_1 = \int_{\mr^N} w_1^2dx$, $d_2 = \int_{\mr^N} w_1^{p+1}\log w_1dx$
and $d_s = \kappa_s \int_{\mr^{N+1}_+} t^{2-2s}|\nabla W_1|^2dxdt$ (whose finiteness for $N > 2s+1$ is guaranteed by \eqref{eq_decay_1}).
Then the following asymptotic expansion is valid.

\begin{prop}\label{prop_energy}
Suppose that $\ep > 0$ is sufficiently small, and $H = 0$ if $s \in [1/2,1)$.
In addition, we remind the reduced energy functional $J_{\ep}$  defined in \eqref{eq_Jep}.

\medskip \noindent
(i) Assume further that $N > \max\{4s, 1\}$. If problems \eqref{eq_main_12} is concerned for $s \in (0, 1)$, then it holds
\begin{multline}\label{eq_energy_est_res_1}
J_{\ep}(\ls) = {sd_0 \over N} \pm {\ep \over p+1} \left[\left\{\({N-2s \over 4s}\) \log\ep - {1 \over p+1} \right\} d_0 - d_2\right]\\
+ \ep\left[{d_1 \over 2}f(\sigma)\lambda^{2s} \pm {(N-2s)^2d_0 \over 4N} \log\lambda + o(1)\right].
\end{multline}

\noindent (ii) Let us consider equation \eqref{eq_main_21} under the assumption that $s \in (0,1/2)$ and $N \ge 2$.
Then it follows that
\begin{equation}\label{eq_energy_est_res_3}
J_{\ep}(\ls) = {sd_0 \over N} + \ep^{1 \over 1-2s} \left[{d_1 \over 2}f(\sigma)\lambda^{2s} + \({2N(N-1) + \(1-4s^2\) \over 4N(1-2s)}\) d_s H(\sigma)\lambda + o(1) \right].
\end{equation}

\noindent In the above estimates, $o(1)$ tends to 0 uniformly for $(\ls) \in (\lambda_1^{-1}, \lambda_1) \times M$.
\end{prop}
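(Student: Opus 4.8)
The plan is to first discard the correction term $\Phi_\els$, thereby reducing the computation of $J_\ep$ to that of $I_\ep(\mw_\els)$, and then to expand $I_\ep(\mw_\els)$ term by term using the metric expansion of Lemma~\ref{lemma_metric_exp}, the expansion \eqref{eq_E_local_2} of $E(\rho)$, the identity \eqref{eq-bubble-eq}, and the decay estimates of Lemma~\ref{lemma_decay}. For the reduction, a Taylor expansion around $\mw_\els$ gives
\[ I_\ep(\mw_\els+\Phi_\els) = I_\ep(\mw_\els) + \la \mw_\els - i_\tf^*\(i(g_\ep(\mw_\els))\),\, \Phi_\els \ra_\tf + O\(\|\Phi_\els\|_{\tf,\ep}^2\), \]
the identity $\la i_\tf^*(v),\Phi\ra_\tf = \int_M v\,i(\Phi)\,dv_\hh$ coming from \eqref{eq_adjoint} showing that the middle term is exactly $I_\ep'(\mw_\els)[\Phi_\els]$, while the quadratic remainder is estimated after controlling $\int_M g_\ep'(\mw_\els+\theta\Phi_\els)\Phi_\els^2$ by H\"older's inequality, Lemma~\ref{lemma_trace} and $\|i(\mw_\els)\|_{L^{q_\ep}(M)}=O(1)$. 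By Lemma~\ref{lemma_error_est} and Proposition~\ref{prop_Phi} both terms are $O(\ep^{2\gamma})$, and since $2\gamma>1$ for problems~\eqref{eq_main_12} and $2\gamma>1/(1-2s)$ for \eqref{eq_main_21} --- precisely the remark following Lemma~\ref{lemma_error_est} --- they are negligible relative to the leading perturbation order in \eqref{eq_energy_est_res_1} and \eqref{eq_energy_est_res_3}. It therefore suffices to expand $I_\ep(\mw_\els)$.

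The constant $sd_0/N$ arises uniformly: by \eqref{eq-bubble-eq} and a change of variables, $\tfrac{\kappa_s}{2}\int_{\mr^{N+1}_+} t^{1-2s}|\nabla W_\delta|^2 = \tfrac12\int_{\mr^N}w_\delta^{p+1} = \tfrac{d_0}{2}$ with $\int_{\mr^N}w_\delta^{p+1}=d_0$ independent of $\delta$, while $-\int_M G_\ep(\mw_\els) = -\tfrac{d_0}{p+1}+o(1)$; since $\tfrac12-\tfrac1{p+1}=\tfrac sN$, the $\ep$-independent part equals $\tfrac{sd_0}{N}$. For part~(i), with $\delta=\ep^{1/(2s)}\lambda$, the potential contributes $\tfrac12\int_M f\,\mw_\els^2\,dv_\hh = \tfrac\ep2 d_1 f(\sigma)\lambda^{2s}+o(\ep)$, the first-order Taylor term in $f$ at $\sigma$ vanishing by the radial symmetry of $W_\delta$ and the higher-order ones being $o(\ep)$ when $N>4s$. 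For the nonlinear term one expands $\mw_\els^{p+1\pm\ep}=w_\delta^{p+1}\(1\pm\ep\log w_\delta+O(\ep^2(\log w_\delta)^2)\)$ and $\tfrac1{p+1\pm\ep}=\tfrac1{p+1}\(1\mp\tfrac{\ep}{p+1}+O(\ep^2)\)$, and uses $\int_{\mr^N}w_\delta^{p+1}\log w_\delta = -\tfrac{N-2s}{2}(\log\delta)\,d_0+d_2$ together with $\log\delta=\tfrac1{2s}\log\ep+\log\lambda$; this reproduces the bracketed $\log\ep$-, $d_2$- and $\log\lambda$-terms of \eqref{eq_energy_est_res_1}. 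One must also verify that all geometric and cutoff corrections to $\tfrac{\kappa_s}{2}\int_X\(\rho^{1-2s}|\nabla\mw_\els|_\bg^2+E(\rho)\mw_\els^2\)dv_\bg$ are $o(\ep)$: the cutoff errors are controlled by \eqref{eq_decay_1} and the $O(|z|^2)$-metric corrections by \eqref{eq_decay_2}, the $t$-linear metric terms either vanish after integration (when $H=0$, using $\int\pi^{ij}\pa_iW_\delta\pa_jW_\delta=\tfrac HN\int|\nabla_xW_\delta|^2$ and oddness) or are $O(\delta)=O(\ep^{1/(2s)})=o(\ep)$ when $s\in(0,1/2)$, and the $E(\rho)$ term is likewise $o(\ep)$, being $O(\delta)$ when $s\in(0,1/2)$ and satisfying $|E(\rho)|\le C\rho^{1-2s}$ when $s\in[1/2,1)$ with $H=0$.

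For part~(ii), with $s\in(0,1/2)$ and $\delta=\ep^{1/(1-2s)}\lambda$, the nonlinearity is exactly critical, so $-\int_M G_\ep(\mw_\els) = -\tfrac{d_0}{p+1}+\tfrac\ep2\int_M f\,\mw_\els^2\,dv_\hh+o(\ep^{1/(1-2s)}) = -\tfrac{d_0}{p+1}+\tfrac{d_1}{2}f(\sigma)\lambda^{2s}\ep^{1/(1-2s)}+o(\ep^{1/(1-2s)})$, using $\ep\,\delta^{2s}=\ep^{1/(1-2s)}\lambda^{2s}$. The genuinely new term is the curvature term, which enters at order $\delta=\ep^{1/(1-2s)}\lambda$ from the quadratic form: expanding $|\nabla\mw_\els|_\bg^2\sqrt{|\bg|}$ via Lemma~\ref{lemma_metric_exp} one retains the $t$-linear contributions $2\pi^{ij}t\,\pa_iW_\delta\,\pa_jW_\delta$ (from $h^{ij}$) and $-Ht\,|\nabla W_\delta|^2$ (from $\sqrt{|\bg|}$), using the radial symmetry of $W_\delta$ as above; combining this with $\kappa_s\int_X E(\rho)\mw_\els^2\,dv_\bg = \tfrac{(N-2s)H(\sigma)}{2}\,\kappa_s\int_{\mr^{N+1}_+}t^{-2s}W_\delta^2+o(\delta)$ from \eqref{eq_E_local_2}, one collects a term $c(N,s)\,d_s\,H(\sigma)\,\delta$. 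The coefficient $c(N,s)$ is then identified with $\tfrac{2N(N-1)+(1-4s^2)}{4N(1-2s)}$ by invoking Pohozaev-type identities for $W_1$ --- obtained by testing $\textnormal{div}(t^{1-2s}\nabla W_1)=0$ against $t\,\pa_tW_1$ and against $x\cdot\nabla_xW_1$ --- which express $\kappa_s\int t^{2-2s}|\nabla_xW_1|^2$, $\kappa_s\int t^{2-2s}(\pa_tW_1)^2$ and $\kappa_s\int t^{-2s}W_1^2$ in terms of $d_s$ and $d_0$. Finally, the $C^1$-regularity of $J_\ep$ is Proposition~\ref{prop_red}, and uniformity of the $o(1)$ on $(\lambda_1^{-1},\lambda_1)\times M$ follows because every estimate above is uniform in $\sigma\in M$, which is compact.

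The hard part will be part~(ii): correctly enumerating all $t$-linear contributions to the quadratic form and, above all, evaluating the integrals of $W_1$ against the singular weights $t^{2-2s}$ and $t^{-2s}$ so as to reproduce the exact coefficient $\tfrac{2N(N-1)+(1-4s^2)}{4N(1-2s)}\,d_s$; this is where the Pohozaev identities for the degenerate equation, and the assumption $s\in(0,1/2)$ (which keeps $\int_0^1 t^{-2s}\,dt$ and the attendant boundary terms finite), are used. A secondary but pervasive difficulty is the bookkeeping needed to show that every cutoff- and metric-induced error is of strictly smaller order than $\ep$ (respectively $\ep^{1/(1-2s)}$), which is precisely what Lemma~\ref{lemma_decay} is designed to supply.
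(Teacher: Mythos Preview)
Your proposal is correct and follows essentially the same architecture as the paper: reduce $J_\ep$ to $I_\ep(\mw_\els)$ by showing the discrepancy is $O(\ep^{2\gamma})$ (the paper packages this as Lemma~\ref{lemma_appen_0}, using $\Phi_\ds\in(K_\ls^\ep)^\perp$ and \eqref{eq_inter_1} directly rather than writing out $I_\ep'(\mw_\els)[\Phi_\els]$, but the content is identical), then expand the gradient, $E(\rho)$, and boundary integrals exactly as you outline.

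The one genuine methodological difference is in how the identities relating $\int t^{2-2s}|\nabla_xW_1|^2$, $\int t^{2-2s}(\pa_tW_1)^2$ and $\int t^{-2s}W_1^2$ to $d_s$ are obtained. The paper (Lemma~\ref{lemma_appen_1}) passes to the Fourier side in $x$, writes $\widehat{W_1}(\xi,t)=\hat w_1(\xi)\phi(2\pi|\xi|t)$, and then multiplies the ODE $\phi''+\tfrac{1-2s}{t}\phi'-\phi=0$ by $t^{3-2s}\phi'$, using the explicit Bessel representation $\phi(t)=2^{1-s}t^sK_s(t)/\Gamma(s)$ to justify the boundary terms. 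Your PDE-level Pohozaev approach works too, but the multipliers should be $tW_1$ and $t^2\pa_tW_1$ (not $t\pa_tW_1$ and $x\cdot\nabla_xW_1$): testing with $tW_1$ yields $\int t^{2-2s}|\nabla W_1|^2=\tfrac{1-2s}{2}\int t^{-2s}W_1^2$, and testing with $t^2\pa_tW_1$ yields $(1+2s)\int t^{2-2s}(\pa_tW_1)^2=(3-2s)\int t^{2-2s}|\nabla_xW_1|^2$; these are expressed purely in terms of $d_s$ (not $d_0$). Your route is slightly more elementary in that it avoids Bessel asymptotics, while the paper's route makes the boundary-term vanishing transparent via the known decay of $K_s$.
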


\noindent To prove this, we need the following lemmas.
\begin{lemma}\label{lemma_appen_0}
Fix any small $\lambda_1 \in (0,1)$. Given $\delta = \ep^{\alpha}\lambda$, we have
\begin{equation}\label{eq_appen_01}
J_{\ep}(\ls) = I_{\ep}\(\mw_{\ds}\) +
\begin{cases}
o(\ep) &\text{for problems } \eqref{eq_main_12},\\
o\(\ep^{1 \over 1-2s}\) &\text{for problem } \eqref{eq_main_21},
\end{cases} \end{equation}
uniformly for $(\ls) \in (\lambda_1^{-1}, \lambda_1) \times M$. \end{lemma}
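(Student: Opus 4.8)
The statement compares the reduced energy $J_{\ep}(\ls) = I_{\ep}(\mw_\els + \Phi_\els)$ with the energy of the pure approximate solution $I_{\ep}(\mw_\els)$, and asserts that the correction introduced by $\Phi_\els$ is of lower order than the main perturbative term (which is of size $\ep$ for \eqref{eq_main_12} and of size $\ep^{1/(1-2s)}$ for \eqref{eq_main_21}). The strategy is a Taylor expansion of $I_{\ep}$ around $\mw_\els$ in the direction $\Phi_\els$. Writing $\Phi = \Phi_\els$ for brevity, one has
\[
I_{\ep}(\mw_\els + \Phi) = I_{\ep}(\mw_\els) + I_{\ep}'(\mw_\els)[\Phi] + \frac12 I_{\ep}''(\mw_\els + \theta\Phi)[\Phi,\Phi]
\]
for some $\theta \in (0,1)$, so the task reduces to bounding the linear term and the quadratic term by the claimed $o(\ep)$ (resp. $o(\ep^{1/(1-2s)})$).

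For the linear term, I would observe that $I_{\ep}'(\mw_\els)[\Phi] = \la \mw_\els - i_{\tf}^*(i(g_{\ep}(\mw_\els))), \Phi\ra_{\tf}$ by the definition of the adjoint $i_{\tf}^*$ and of the $\|\cdot\|_{\tf}$ inner product. Since $\Phi \in (K_\ls^{\ep})^\perp$ and, by the intermediate equation \eqref{eq_inter_1}, the projection $(\Pi_\ls^{\ep})^\perp$ of $\mw_\els - i_{\tf}^*(i(g_{\ep}(\mw_\els + \Phi)))$ vanishes, one can rewrite this pairing using \eqref{eq_inter_3} as a combination of the error term $E^\ep_\ls$ and the nonlinear remainder $N^\ep_\ls(\Phi)$ paired against $\Phi$; Cauchy--Schwarz then gives $|I_{\ep}'(\mw_\els)[\Phi]| \le C(\|E^\ep_\ls\|_{\tf,\ep} + \|N^\ep_\ls(\Phi)\|_{\tf,\ep})\|\Phi\|_{\tf,\ep}$. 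By Lemma \ref{lemma_error_est} (and the accompanying remark that $\gamma > 1/2$ for \eqref{eq_main_12} and $\gamma > 1/(2(1-2s))$ for \eqref{eq_main_21}) together with Proposition \ref{prop_Phi}, both factors are $O(\ep^{\gamma})$, so the linear term is $O(\ep^{2\gamma})$, which is $o(\ep)$ (resp. $o(\ep^{1/(1-2s)})$) precisely because $2\gamma > 1$ (resp. $2\gamma > 1/(1-2s)$). For the quadratic term, I would use $I_{\ep}''(V)[\Phi,\Phi] = \|\Phi\|_{\tf}^2 - (p\pm\ep)\int_M |V|_+^{p-1\pm\ep}\Phi^2\,dv_{\hh}$ (with the obvious modification $-\ep\int_M f\Phi^2$ in the case of \eqref{eq_main_21}), estimate the second integral by H\"older's inequality and the trace inequality \eqref{eq_trace_mf}, using that $\||\mw_\els + \theta\Phi|^{p-1\pm\ep}\|$ in the appropriate Lebesgue norm is $O(1)$ uniformly (this is where one uses $i(\mw_\els) \in L^{q_{\ep}}(M)$ and the smallness of $\|\Phi\|_{\tf,\ep}$), and conclude $|I_{\ep}''(\mw_\els+\theta\Phi)[\Phi,\Phi]| \le C\|\Phi\|_{\tf,\ep}^2 = O(\ep^{2\gamma})$, again $o(\ep)$ (resp. $o(\ep^{1/(1-2s)})$).

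The main obstacle I anticipate is the bookkeeping in the quadratic term for the \emph{supercritical} problem $(15_+)$: there the relevant function spaces are the $\mh_{\ep}$-spaces with the extra $L^{q_{\ep}}(M)$ component, the exponents in the H\"older splitting depend on $\ep$, and one must check that the constants coming from \eqref{eq_trace_mf} and from the $L^{\tilde r_{\ep}}$-norms of $g'_{\ep}(\mw_\els)$ stay bounded as $\ep \to 0$ (as was done in the proof of Lemma \ref{lemma_lin_op_bdd}); the same care is needed to ensure $\ep^{-\ep} = O(1)$-type factors do not degrade the bound. Once the uniform $O(1)$ control of these auxiliary norms is in place, the estimate $I_{\ep}(\mw_\els + \Phi_\els) - I_{\ep}(\mw_\els) = O(\ep^{2\gamma})$ follows, and the strict inequalities $2\gamma > 1$ and $2\gamma > 1/(1-2s)$ noted after Lemma \ref{lemma_error_est} convert this into the claimed $o(\ep)$ and $o(\ep^{1/(1-2s)})$, uniformly for $(\ls) \in (\lambda_1^{-1},\lambda_1) \times M$. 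This completes the proof.
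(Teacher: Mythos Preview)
Your proposal is correct and follows essentially the same path as the paper: a Taylor expansion of $I_{\ep}$ at $\mw_\els$, the orthogonality $\Phi_\els \in (K_\ls^{\ep})^\perp$, and the bound $\|\Phi_\els\|_{\tf,\ep}^2 = O(\ep^{2\gamma})$ from Proposition~\ref{prop_Phi} together with the inequalities $2\gamma > 1$ (resp.\ $2\gamma > 1/(1-2s)$) noted after Lemma~\ref{lemma_error_est}. The paper packages the computation slightly more compactly by pairing \eqref{eq_inter_1} with $\Phi_\els$ to replace $\la \mw_\els + \Phi_\els, \Phi_\els\ra_{\tf}$ by $\int_M g_{\ep}(\mw_\els+\Phi_\els)\Phi_\els$, so that the whole difference becomes a sum of second-order Taylor remainders, rather than splitting into a linear and a quadratic piece; one small slip in your write-up is the claim that $(\Pi_\ls^{\ep})^\perp\big[\mw_\els - i_{\tf}^*(i(g_{\ep}(\mw_\els+\Phi_\els)))\big]$ vanishes---it equals $-\Phi_\els$---but this is harmless, since for the linear term you only need $I_{\ep}'(\mw_\els)[\Phi_\els] = \la E^\ep_\ls, \Phi_\els\ra_{\tf}$ (by $\Phi_\els \perp K_\ls^{\ep}$ and the definition of $E^\ep_\ls$), and Cauchy--Schwarz plus Lemma~\ref{lemma_error_est} already give $O(\ep^{2\gamma})$ without invoking $N^\ep_\ls$ or \eqref{eq_inter_1}.
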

\begin{proof}
Putting $\Phi_{\ds}$ into \eqref{eq_inter_1} and then applying $\Phi_{\ds} \in (K_{\ls}^{\ep})^{\perp}$ and Taylor's theorem, we obtain
\begin{align*}
&\ J_{\ep}(\ls) - I_{\ep}\(\mw_{\ds}\)\\
&= \la \mw_{\ds} + \Phi_{\ds}, \Phi_{\ds} \ra_{\tf} - \int_M \(G_{\ep}\(\mw_{\ds} + \Phi_{\ds}\) - G_{\ep}\(\mw_{\ds}\)\)\\
&= \int_M \(g_{\ep}\(\mw_{\ds} + \Phi_{\ds}\) - g_{\ep}\(\mw_{\ds}\)\)\Phi_{\ds}\\
&\hspace{150pt} - \int_M \(G_{\ep}\(\mw_{\ds} + \Phi_{\ds}\) - G_{\ep}\(\mw_{\ds}\) - g_{\ep}\(\mw_{\ds}\)\Phi_{\ds}\)\\
&= O\(\|\Phi_{\ds}\|_{\tf}^2\).
\end{align*}
Therefore the conclusion follows by Proposition
\ref{prop_Phi}.
\end{proof}

\begin{lemma}\label{lemma_appen_1}
Suppose that $s \in (0, 1/2)$ and $N > 2s+1$. Then
\begin{equation}\label{eq_appen_11}
\int_{\mr^{N+1}_+} t^{2-2s} |\nabla W_1|^2 dxdt = {4 \over 1+2s} \int_{\mr^{N+1}_+} t^{2-2s} |\nabla_x W_1|^2 dxdt = {1-2s \over 2} \int_{\mr^{N+1}_+} t^{-2s} W_1^2 dxdt < \infty.
\end{equation}
\end{lemma}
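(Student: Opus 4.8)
The plan is to obtain both equalities purely from the degenerate equation $\mathrm{div}(t^{1-2s}\nabla W_1)=0$ on $\mr^{N+1}_+$ by testing it against suitably chosen vector fields, in the spirit of Pohozaev identities; the boundary relation $\pa_\nu^s W_1 = w_1^p$ will play no role. Since the finiteness of $\int_{\mr^{N+1}_+} t^{2-2s}|\nabla W_1|^2\,dxdt$ for $N>2s+1$ is already recorded (it is $\kappa_s^{-1}d_s$), it suffices to establish the two displayed equalities; the finiteness of $\int_{\mr^{N+1}_+} t^{2-2s}|\nabla_x W_1|^2\,dxdt$ and of $\int_{\mr^{N+1}_+} t^{-2s}W_1^2\,dxdt$ will then be automatic, both being controlled by $\int_{\mr^{N+1}_+} t^{2-2s}|\nabla W_1|^2\,dxdt$.

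Before the computations I would record the two facts that make the integrations by parts below legitimate. Near $\{t=0\}$, the Caffarelli--Silvestre expansion of $W_1=\mathrm{Ext}^s(w_1)$ gives $\pa_t W_1 = O(t^{2s-1})$ as $t\to 0^+$, so that $t^{2-2s}|\pa_t W_1|^2 = O(t^{2s})$ and $t^{1-2s}W_1^2 = O(t^{1-2s})$ both tend to $0$ there (the latter uses $0<s<1/2$). At spatial infinity $W_1$ and its first two derivatives decay like $|(x,t)|^{-(N-2s)}$, $|(x,t)|^{-(N-2s+1)}$ and $|(x,t)|^{-(N-2s+2)}$, respectively, which under $N>2s+1$ keeps all weighted integrals below absolutely convergent and kills the spherical boundary terms; these bounds are exactly of the type quantified in Lemma \ref{lemma_decay}. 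Accordingly I will write the computations formally over $\mr^{N+1}_+$, the rigorous form being the identical computation on $B_R^+\cap\{t>\epsilon\}$ followed by letting $\epsilon\to 0^+$ and $R\to\infty$.

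For the first equality I would multiply $\mathrm{div}(t^{1-2s}\nabla W_1)=0$ by $t^2\pa_t W_1$ and integrate by parts, the boundary contributions vanishing because $t^2\pa_t W_1=O(t^{2s+1})\to 0$ at $t=0$; this gives $\int_{\mr^{N+1}_+} t^{1-2s}\nabla W_1\cdot\nabla(t^2\pa_t W_1)\,dxdt=0$. Expanding the integrand as
\[\nabla W_1\cdot\nabla(t^2\pa_t W_1)=\frac{t^2}{2}\,\pa_t\big(|\nabla_x W_1|^2\big)+2t\,|\pa_t W_1|^2+\frac{t^2}{2}\,\pa_t\big(|\pa_t W_1|^2\big)\]
and then integrating by parts in $t$ (using $\int_0^\infty t^{3-2s}\pa_t g\,dt=-(3-2s)\int_0^\infty t^{2-2s}g\,dt$, with vanishing endpoints) leads to
\[0=-\frac{3-2s}{2}\int_{\mr^{N+1}_+} t^{2-2s}|\nabla_x W_1|^2\,dxdt+\frac{1+2s}{2}\int_{\mr^{N+1}_+} t^{2-2s}|\pa_t W_1|^2\,dxdt,\]
which, together with $|\nabla W_1|^2=|\nabla_x W_1|^2+|\pa_t W_1|^2$, is precisely $\int t^{2-2s}|\nabla W_1|^2=\frac{4}{1+2s}\int t^{2-2s}|\nabla_x W_1|^2$.

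For the second equality I would instead test with $tW_1$ (no boundary term at $t=0$ since $tW_1\to 0$ there), obtaining $\int_{\mr^{N+1}_+} t^{1-2s}\nabla W_1\cdot\nabla(tW_1)\,dxdt=0$; since $\nabla W_1\cdot\nabla(tW_1)=t|\nabla W_1|^2+W_1\pa_t W_1$, this reads $\int_{\mr^{N+1}_+} t^{2-2s}|\nabla W_1|^2\,dxdt=-\int_{\mr^{N+1}_+} t^{1-2s}W_1\pa_t W_1\,dxdt$, and the right-hand side equals $\frac{1-2s}{2}\int_{\mr^{N+1}_+} t^{-2s}W_1^2\,dxdt$ after integrating $\tfrac12 t^{1-2s}\pa_t(W_1^2)$ by parts in $t$ (the endpoint at $t=0$ vanishing because $1-2s>0$). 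I expect the only genuine difficulty to be the bookkeeping for these two integrations by parts: one must check carefully that the truncated boundary terms on $\{t=\epsilon\}$ and on $\{|(x,t)|=R\}$ go to $0$, and it is exactly there that the hypotheses $s\in(0,1/2)$ and $N>2s+1$ are consumed.
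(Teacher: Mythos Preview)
Your argument is correct. Both identities follow exactly as you outline: testing the degenerate equation against $t^2\pa_t W_1$ gives
\[
\frac{3-2s}{2}\int_{\mr^{N+1}_+} t^{2-2s}|\nabla_x W_1|^2\,dxdt=\frac{1+2s}{2}\int_{\mr^{N+1}_+} t^{2-2s}|\pa_t W_1|^2\,dxdt,
\]
which is equivalent to the first equality, and testing against $tW_1$ followed by one more integration by parts in $t$ (legitimate precisely because $1-2s>0$) yields the second. The boundary bookkeeping you describe is accurate; the hypotheses $s\in(0,1/2)$ and $N>2s+1$ are used exactly where you indicate.

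The paper takes a different route. It passes to the Fourier transform in the $x$-variable, writing $\ww_1(\xi,t)=\hat w_1(\xi)\phi(2\pi|\xi|t)$ with $\phi$ the Bessel-type profile solving $\phi''+\tfrac{1-2s}{t}\phi'-\phi=0$, and then uses Plancherel to factor each weighted integral as $\int_{\mr^N}(2\pi|\xi|)^{2s-1}|\hat w_1|^2\,d\xi$ times a one-dimensional integral in $\phi$. The identities then reduce to ODE identities for $\phi$, obtained by multiplying the ODE by $t^{3-2s}\phi'$ and integrating by parts; the endpoint behaviour is read off from the explicit formula $\phi(t)=2^{1-s}t^sK_s(t)/\Gamma(s)$. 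Morally this is the same integration by parts as yours, carried out after separation of variables rather than before. Your approach is more elementary in that it bypasses the Fourier transform and the Bessel asymptotics entirely, relying instead on the pointwise decay of $W_1$ from Lemma~\ref{lemma_decay}; the paper's approach, on the other hand, makes the role of the profile $\phi$ transparent and handles the decay at infinity through the known exponential decay of $K_s$ rather than through separate estimates on $W_1$.
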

\begin{proof}
The argument we will use here is based on the proof of Lemma 7.2 in \cite{GQ}.
We will only prove the first identity, because the second identity can be justified in a similar manner.

If we denote the Fourier transform of $W_1$ with respect to the $x$-variable by $\ww_1$,
then we have $\ww_1(\xi,t) = \hat{w}_1(\xi)\phi(2\pi|\xi|t)$ where $\phi(t)$ is a solution of the equation
\begin{equation}\label{eq_phi}
\phi''(t)+{1-2s \over t}\phi'(t)-\phi(t) = 0 \quad \text{in } \mr_+, \ \phi(0) = 1, \ \lim_{t \to \infty}\phi(t) = 0.
\end{equation}
Thus we have
\begin{equation}\label{eq_W_est_1}
\begin{aligned}
\int_{\mr^{N+1}_+} t^{2-2s} |\nabla_x W_1|^2 dxdt
&= \int_{\mr^{N+1}_+} (2\pi|\xi|)^2 t^{2-2s} \big|\ww_1(\xi,t)\big|^2 d\xi dt\\
&= \int_{\mr^N}(2\pi|\xi|)^{2s-1}|\hat{w}_1(\xi)|^2 d\xi \cdot \int_0^{\infty} t^{2-2s} |\phi(t)|^2 dt
\end{aligned}
\end{equation}
and
\begin{equation}\label{eq_W_est_2}
\begin{aligned}
\int_{\mr^{N+1}_+} t^{2-2s} (\pa_t W_1)^2 dxdt
&= \int_{\mr^{N+1}_+} (2\pi|\xi|)^2 t^{2-2s} |\hat{w}(\xi)|^2|\phi(2\pi|\xi|t)|^2 d\xi dt\\
&= \int_{\mr^N}(2\pi|\xi|)^{2s-1}|\hat{w}_1(\xi)|^2 d\xi \cdot \int_0^{\infty} t^{2-2s} |\phi'(t)|^2 dt.
\end{aligned}
\end{equation}

Since $\phi(t) = 2^{1-s}t^sK_s(t)/\Gamma(s)$ where $K_s$ is the modified Bessel function of the second kind (see \cite[Lemma 14]{Go} for its derivation), $\phi$ decays exponentially as $t$ goes to $\infty$ and $\phi'(t) \sim t^{-1}$ near 0.
Hence after multiplying \eqref{eq_phi} by $t^{3-2s}\phi'(t)$, which converges to 0 as $t \to 0$, and applying integration by parts, we discover that
\begin{align*}
{3-2s \over 2}\int_0^{\infty} t^{2-2s}\phi^2
&= -(1-2s) \int_0^{\infty}t^{2-2s}(\phi')^2 - \int_0^{\infty}t^{3-2s}\phi'\phi''\\
&= -(1-2s) \int_0^{\infty} t^{2-2s}(\phi')^2 + {3-2s \over 2} \int_0^{\infty} t^{2-2s}(\phi')^2 - {1 \over 2} \left.t^{3-2s}(\phi')^2\right]_{t=0}^{\infty}\\
&= {1+2s \over 2} \int_0^{\infty} t^{2-2s}(\phi')^2.
\end{align*}
Putting this with \eqref{eq_W_est_1} and \eqref{eq_W_est_2} gives the first estimate of \eqref{eq_appen_11}.
\end{proof}

\begin{proof}[Proof of Proposition \ref{prop_energy}]
We will accomplish the proof in 3 steps.
We use the notation $\delta = \ep^{\alpha} \lambda$, $\pa_i = \pa_{x_i}$ for $i = 1, \cdots, N$, and $t$ to denote $\rho$ near the boundary.

\medskip \noindent \textsc{Step 1.}
We initiate the proof by computing $\kappa_s \int_X \rho^{1-2s} |\nabla \mw_{\ds}|_{\bg}^2 dv_{\bg}$. By \eqref{eq_decay_1}, we have
\begin{align*}
&\ \kappa_s \int_X \rho^{1-2s} |\nabla \mw_{\ds}|_{\bg}^2 dv_{\bg} \\
&= \kappa_s \int_{B_{r_0}^+} t^{1-2s} \left[\bg^{ij}\pa_i W_{\delta}(x,t)\pa_jW_{\delta}(x,t) + (\pa_tW_{\delta}(x,t))^2\right] \sqrt{|\bg|}dxdt + \begin{cases}
o(\ep) &\text{for } \eqref{eq_main_12},\\
o\(\ep^{1 \over 1-2s}\) &\text{for } \eqref{eq_main_21},
\end{cases} \end{align*}
where $r_0$ is the small positive number chosen in Section \ref{sec_scheme}.
Also Lemma \ref{lemma_metric_exp} implies that $\bar{g}^{ij}= \delta^{ij} + 2\pi^{ij} t + O(|(x,t)|^2)$ and $\sqrt{|\bar{g}|} = 1- H t + O(|(x,t)|^2)$.
Hence we can compute
\begin{equation}\label{eq_energy_est_6}
\begin{aligned}
&\kappa_s \int_{B_{r_0}^+} t^{1-2s} \left[\bg^{ij}\pa_i W_{\delta}(x,t)\pa_jW_{\delta}(x,t) + (\pa_tW_{\delta}(x,t))^2\right] \sqrt{|\bg|}dxdt
\\
&=\kappa_s \int_{B_{r_0}^+} t^{1-2s}|\nabla W_{\delta}|^2 dxdt
+ \kappa_s \( 2 \pi^{ij}(\sigma) \int_{B_{r_0}^+} t^{2-2s} \pa_i W_{\delta}\pa_j W_{\delta} dxdt - H(\sigma) \int_{B_{r_0}^+} t^{2-2s} |\nabla W_{\delta}|^2 dxdt\) \\
&\quad + \int_{B_{r_0}^+} t^{1-2s} O\(|(x,t)|^2\)|\nabla W_{\delta}|^2 dxdt
\end{aligned} \end{equation}
where the last term of the right-hand side is negligible by \eqref{eq_decay_2}.

On the other hand, since $\pa_iW_1$ is odd in $x_i$ and $\pi^{ij} = \hh^{ik}\pi_{kl}\hh^{lj} = \delta^{ik}\pi_{kl}\delta^{lj} = \pi_{ij}$ so that $\pi^{ij}\delta_{ij} = \pi_{ij}\hh^{ij} = H$ at the point $\sigma$ (for we are using now the normal coordinate of $\hh$ at $\sigma$), it holds
\begin{equation}\label{eq_energy_est_5}
2 \pi^{ij}(\sigma) \int_{\mr^{N+1}_+} t^{2-2s} \pa_i W_1\pa_j W_1 dxdt = {2 \over N} H(\sigma) \int_{\mr^{N+1}_+} t^{2-2s} |\nabla_x W_1|^2 dxdt
\end{equation}
(which is finite provided that $N > 2s+1$).

Having them in mind, we consider problems \eqref{eq_main_12} first.
It would be convenient to divide the cases according to the magnitude of $s$.

\medskip \noindent - If $s \in (0,1/2)$, then \eqref{eq_decay_1} gives us  that $\int_{B_{r_0}^+} t^{2-2s} |\nabla W_{\delta}|^2 dxdt = o(\delta^{2s}) = o(\ep)$.

\medskip \noindent - If $s \in [1/2, 1)$, then we observe that \eqref{eq_energy_est_5} remains valid if we change the domains of integration of the both integrals to the half ball $B^+_{r_0}$.
Thus by the hypothesis that $H = 0$ on $M$ if $s \in [1/2, 1)$, we deduce the second term of the right-hand side of \eqref{eq_energy_est_6} vanishes.

\medskip
For problem \eqref{eq_main_21}, we note that if $N \ge 2$, then $N > 2s + 1$ for $s \in (0, 1/2)$.
Hence \eqref{eq_energy_est_5} is meaningful for this problem.

Now applying Lemma \ref{lemma_decay} and \eqref{eq-bubble-eq}, we deduce that
\begin{equation}
\begin{aligned}\label{eq_energy_est_1}
&\kappa_s \int_X \rho^{1-2s} |\nabla \mw_{\ds}|_{\bg}^2 dv_{\bg}
\\
&= \begin{cases}
\int_{\mr^N} w_1^{p+1} dx + o\(\delta^{2s}\) &\text{for } \eqref{eq_main_12}, \\
\int_{\mr^N} w_1^{p+1} dx + \delta \kappa_s H(\sigma) \cdot \({1+2s-2N \over 2N}\) \int_{\mr^{N+1}_+} t^{2-2s} |\nabla W_1|^2 dxdt + o(\delta) &\text{for } \eqref{eq_main_21}.
\end{cases} \end{aligned} \end{equation}

\medskip \noindent \textsc{Step 2.}
Next, we calculate $\kappa_s \int_X E(\rho) \mw_{\ds}^2 dv_{\bg}$.
Assume that $s \in [1/2, 1)$ and $H = 0$. Then $|E(\rho)| \le C \rho^{1-2s}$, so we get
\begin{equation} \label{eq_energy_est_2}
\kappa_s \int_X |E(\rho)| \mw_{\ds}^2 dv_{\bg} \le C \int_{B_{r_0}^+} t^{1-2s} W_{\delta}^2 dx dt = \begin{cases}
O\(\delta^{\min\{2, N-2s\}}\) &\text{if } N \ne 2s + 2,\\
O\(\delta^2 |\log \delta|\) &\text{if } N = 2s + 2.
\end{cases} \end{equation}
On the other hand, if $s \in (0,1/2)$, then \eqref{eq_E_local_2} shows that $\kappa_s \int_X E(\rho) \mw_{\ds}^2 dv_{\bg} = O(\delta) = o(\delta^{2s})$ so that we can neglect this term if problems \eqref{eq_main_12} is considered.
If $N > 2s + 1$, we have more accurate estimate
\begin{equation} \label{eq_energy_est_22}
\begin{aligned}
\kappa_s \int_X E(\rho) \mw_{\ds}^2 dv_{\bg}
&= \kappa_s \({N-2s \over 2}\) \int_X H \rho^{-2s} \mw_{\ds}^2 dv_{\bg} + \begin{cases}
O\(\delta^{\min\{2, N-2s\}}\) &\text{if } N \ne 2s + 2,\\
O\(\delta^2 |\log \delta|\) &\text{if } N = 2s + 2,
\end{cases} \\
&= \kappa_s \({N-2s \over 2}\) H(\sigma) \delta \int_{\mr^{N+1}_+} t^{-2s} W_1^2 dxdt + o(\delta),
\end{aligned} \end{equation}
which is needed for problem \eqref{eq_main_21}.

\medskip \noindent \textsc{Step 3.}
Finally, we turn to estimate $\int_M G_{\ep}(\mw_{\ds}) dv_{\hh}$.
To deal with whole cases, it suffices to compute that
\[I_1 := \int_M \mw_{\ds}^{p+1} dv_{\hh}, \quad I_2 := \int_M \Bigg({\mw_{\ds}^{p+1 \pm \ep} \over p+1 \pm \ep} - {\mw_{\ds}^{p+1} \over p+1}\Bigg) dv_{\hh} \quad \text{and} \quad I_3 := \int_M f \mw_{\ds}^2 dv_{\hh}.\]
Since $dv_{\hh} = \sqrt{|h|}dx = 1 - {1 \over 6}R_{ij}x_ix_j + O(|x|^3)$, under the assumption that $N > 1$ it is plain to obtain for all $s \in (0,1)$ that
\begin{equation}\label{eq_energy_est_3}
I_1 = \int_{\mr^N} w_1^{p+1} dx + o\(\delta^{\max\{2s, 1\}}\) \quad \text{and}
\quad I_3 = \delta^{2s} \(f(\sigma) \int_{\mr^N} w_1^2 dx + o(1)\).
\end{equation}
Besides one can calculate the integral $I_2$ by applying Taylor's theorem and the expansion $(a\ep)^{b\ep} = 1 + b\ep \log(a\ep) + O(\ep^2|\log\ep|)$ which holds for $a > 0$, $b \in \mr$ and small $\ep > 0$, yielding
\begin{equation}\label{eq_energy_est_4}
\begin{aligned}
I_2 &= \int_{\mr^N} \Bigg({(\lambda \ep^{\alpha})^{\mp\(\frac{N-2s}{2}\)\ep}w_1^{p+1\pm\ep} \over p+1\pm\ep} - {w_1^{p+1} \over p+1}\Bigg)dx + O\(\delta^2|\log \delta|\)\\
&= \pm \ep \left[\frac{1}{p+1} \int_{\mr^N} w_1^{p+1}\log w_1dx - (\alpha \log\ep + \log \lambda) \cdot {(N-2s)^2 \over 4N} \cdot \int_{\mr^N}w_1^{p+1}dx \right]\\
&\quad \mp {\ep \over (p+1)^2} \int_{\mr^N} w_1^{p+1} dx + O\(\ep^2|\log \ep|\) + O\(\delta^2|\log \delta|\).
\end{aligned}
\end{equation}

\medskip
From \eqref{eq_energy_est_1}-\eqref{eq_energy_est_4} and \eqref{eq_appen_01}, estimations \eqref{eq_energy_est_res_1} and \eqref{eq_energy_est_res_3} can be deduced at once.
This concludes the proof.
\end{proof}

\subsection{The $C^1$-estimates}\label{subsec_energy_2}
The aim of this subsection is to improve Proposition \ref{prop_energy} by showing that the $o(1)$-terms go to 0 in $C^1$-sense.
Unfortunately there is some technical difficulty in obtaining the $C^1$-estimates,
because the estimate $\|\Phi_\els\|_{\bar{f}} = O(\ep^{\gamma})$ in \eqref{eq_phi_norm} (and $\|\Phi_\els\|_{L^q (M)}=O(\ep^{\gamma})$ for $1 \le q \le q_{\ep}$) of the remainder term $\Phi_\els$ is not so small
compared with the blow up rate $\ep^{-\alpha}$ of the bubbles $\mw_\els$, especially when $s$ is close to zero.
In fact, the standard argument for the $C^1$-estimates of $J_{\ep}$ (see e.g. \cite{MP}) provides only the bound $O(\ep^{-\alpha+2\gamma})$ for the error term,
which is not tolerated in \eqref{eq-c1-1} and \eqref{eq-c1-3} below.
Nevertheless, we can achieve the $C^1$-estimates by modifying some ideas in Esposito-Musso-Pistoia \cite{EMP}.

\begin{prop}\label{prop_energy_1}
Estimates \eqref{eq_energy_est_res_1} and \eqref{eq_energy_est_res_3} are valid $C^1$-uniformly for $(\ls) \in (\lambda_1^{-1},\lambda_1) \times M$. Precisely, the following holds for each fixed point $\sigma_0 \in M$.
Suppose that $y \in \mr^N$ is a point near the origin.

\medskip \noindent (i) Under the assumption of (i) in Proposition \ref{prop_energy}, we have
\begin{equation}\label{eq-c1-1}
\left.\frac{\pa}{\pa y_k} J_{\ep}(\lambda, \exp_{\sigma_0}(y))\right|_{y=0} = \ep \frac{\pa}{\pa y_k} \left[\frac{d_1}{2} f(\exp_{\sigma_0}(y)) \lambda^{2s}\right]_{y=0} + o(\ep)
\end{equation}
for each $1 \le k \le N$ and
\begin{equation}\label{eq-c1-2}
\frac{\pa}{\pa \lambda}J_{\ep}(\lambda, \sigma) = \ep \left[ d_1s f(\sigma) \lambda^{2s-1} \pm \frac{(N-2s)^2 d_0}{4N} {1 \over \lambda} \right] + o(\ep).
\end{equation}

\medskip \noindent (ii) Under the assumption of (ii) in Proposition \ref{prop_energy}, we have
\begin{multline}\label{eq-c1-3}
\left.\frac{\pa}{\pa y_k} J_{\ep}(\lambda, \exp_{\sigma_0}(y))\right|_{y=0} \\
= \ep^{\frac{1}{1-2s}} \frac{\pa}{\pa y_k}\left[ \frac{d_1}{2} f(\exp_{\sigma_0}(y))\lambda^{2s} + \({2N(N-1) + \(1-4s^2\) \over 4N(1-2s)}\) d_s H(\exp_{\sigma_0}(y))\lambda\right]_{y=0} + o\(\ep^{\frac{1}{1-2s}}\)
\end{multline}
for each $1 \le k \le N$ and
\begin{equation}\label{eq-c1-4}
\frac{\pa}{\pa \lambda} J_{\ep}(\lambda, \sigma)
= \ep^{\frac{1}{1-2s}} \left[ d_1s f(\sigma)\lambda^{2s-1} + \({2N(N-1) + \(1-4s^2\) \over 4N(1-2s)}\) d_s H(\sigma) \right]_{y=0} + o\(\ep^{\frac{1}{1-2s}}\).
\end{equation}
\end{prop}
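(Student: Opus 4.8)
Since $J_\ep$ is continuously differentiable by Proposition \ref{prop_red}, the plan is to differentiate $J_\ep(\ls) = I_\ep(\mw_\els + \Phi_\els)$ directly and to peel off its main part using the projected equation. Write $\pa_t$ for either $\pa_\lambda$ or $\frac{\pa}{\pa y_k}$ evaluated at $y=0$, with $\sigma = \exp_{\sigma_0}(y)$. The chain rule gives $\pa_t J_\ep(\ls) = I_\ep'(\mw_\els + \Phi_\els)[\pa_t(\mw_\els + \Phi_\els)]$, and since $\mw_\els + \Phi_\els$ solves the intermediate equation \eqref{eq_inter_1}, i.e. $(\mw_\els + \Phi_\els) - i^*_{\tf}(i(g_\ep(\mw_\els + \Phi_\els))) = \sum_{i} c_i(\ls)\,\mz^i_\els$ for Lagrange multipliers $c_i(\ls)$, the differential $I_\ep'(\mw_\els + \Phi_\els)$ equals $\sum_i c_i(\ls)\,\la \mz^i_\els, \cdot \ra_{\tf}$ on all of $\mh_\ep$; hence
\[\pa_t J_\ep(\ls) = \sum_{i=0}^{N} c_i(\ls)\,\la \mz^i_\els, \pa_t(\mw_\els + \Phi_\els)\ra_{\tf}.\]
Two elementary facts localize the computation: directly from \eqref{eq_first_approx} and $\delta = \ep^\alpha\lambda$ one has the exact identity $\pa_\lambda\mw_\els = \ep^\alpha\mz^0_\els$, while in the normal chart $\pa_{y_k}\big[\mw_{\ep^\alpha\lambda,\exp_{\sigma_0}(y)}\big]_{y=0} = \mz^k_\els + R_k$, where $R_k$ is supported on the region where $\chi_1$ or the ambient metric differs from the flat model; moreover, differentiating $\la \Phi_\els, \mz^i_\els\ra_{\tf} = 0$ yields $\la \mz^i_\els, \pa_t\Phi_\els\ra_{\tf} = -\la \Phi_\els, \pa_t\mz^i_\els\ra_{\tf}$.

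Next I would identify the multipliers. Testing the intermediate equation against $\mz^j_\els$, using $\la \Phi_\els, \mz^j_\els\ra_{\tf} = 0$ and the almost-orthogonality $\la \mz^i_\els, \mz^j_\els\ra_{\tf} = \delta^{-2}(\beta_i\delta_{ij} + o(1))$ of Lemma \ref{lemma_lin_ortho} to invert the (almost-diagonal) Gram matrix, one obtains $c_i(\ls) = \delta^2\beta_i^{-1}\,b_i(\ls)\,(1 + o(1))$ with $b_i(\ls) = \la \mw_\els, \mz^i_\els\ra_{\tf} - \int_M g_\ep(\mw_\els + \Phi_\els)\,\mz^i_\els\,dv_{\hh}$. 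The leading behaviour of $b_i$ is then computed exactly as in the proof of Proposition \ref{prop_energy}: since $\mw_\els$ is a truncated $s$-harmonic extension and $\int_{\mr^N} w_1^p z_1^i\,dx = 0$, the term $\la \mw_\els, \mz^i_\els\ra_{\tf}$ reduces, for \eqref{eq_main_12}, to $\int_M f\mw_\els\mz^i_\els\,dv_{\hh}$, of order $\delta^{2s-1}$ for $i=0$ and of order $\delta^{2s}$ (with coefficient $\tfrac12\pa_{y_i}(f\circ\exp_{\sigma_0})(0)\,d_1$) for $1 \le i \le N$, while $-\int_M g_\ep(\mw_\els)\mz^i_\els\,dv_{\hh}$ contributes the $\pm\ep$-logarithmic term $\pm\ep\,\tfrac{(N-2s)^2 d_0}{4N}\,\delta^{-1}$ for $i=0$ (and something of lower order for $i\ge1$, by parity); for \eqref{eq_main_21} the weighted term $\kappa_s\int_X E(\rho)\mw_\els\mz^0_\els\,dv_{\bg}$ together with the curved-metric correction of $\kappa_s\int_X \rho^{1-2s}\nabla\mw_\els\cdot\nabla\mz^0_\els\,dv_{\bg}$ — evaluated via Lemma \ref{lemma_appen_1} — yields the $\big(\tfrac{2N(N-1)+(1-4s^2)}{4N(1-2s)}\big)d_s H(\sigma)$ term, and $-\ep\int_M f\mw_\els\mz^0_\els\,dv_{\hh}$ reproduces the $f$-term. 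Substituting back into $\pa_\lambda J_\ep = \ep^\alpha c_0\delta^{-2}\beta_0(1+o(1)) + \cdots$ (for $\pa_{y_k}$ one instead uses $\pa_{y_k}\mw \approx \mz^k_\els$ and gets $c_k\delta^{-2}\beta_k(1+o(1)) + \cdots$) and simplifying with $\delta = \ep^\alpha\lambda$ reproduces precisely the right-hand sides of \eqref{eq-c1-1}--\eqref{eq-c1-4}; one checks these are exactly the $\ep$- (resp. $\ep^{1/(1-2s)}$-) multiples of the gradient of the main functional appearing in Proposition \ref{prop_energy}.

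The main obstacle is to show that every $\Phi_\els$-dependent contribution is of lower order, namely $o(\ep)$ for \eqref{eq_main_12} and $o(\ep^{1/(1-2s)})$ for \eqref{eq_main_21}. These are (a) $\sum_i c_i\la \Phi_\els, \pa_t\mz^i_\els\ra_{\tf}$ and (b) the $\Phi_\els$-part of $b_i$, i.e. $\int_M [g_\ep(\mw_\els + \Phi_\els) - g_\ep(\mw_\els)]\mz^i_\els\,dv_{\hh}$. As remarked before the statement, a crude estimate of (b) by H\"older and \eqref{eq_trace_mf}, using $\|\mz^i_\els\|_{L^{2N/(N-2s)}(M)} = O(\delta^{-1}) = O(\ep^{-\alpha})$ and $\|\Phi_\els\|_{\tf,\ep} = O(\ep^\gamma)$ from Proposition \ref{prop_Phi}, yields only $O(\ep^{-\alpha + 2\gamma})$, which fails to be $o(\ep)$ when $s$ is small (hence $\gamma$ small). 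To circumvent this I would adapt Esposito--Musso--Pistoia \cite{EMP}: rather than Taylor-expanding $I_\ep'(\mw_\els + \Phi_\els)[\pa_t\mw_\els]$ about $\mw_\els$, keep the pairings intact and route every $\Phi_\els$-dependent quantity either through the Gram matrix of Lemma \ref{lemma_lin_ortho} — which supplies the compensating factor $\delta^2 = \ep^{2\alpha}$ — or through an orthogonality relation. For instance, combining $\la \Phi_\els, \mz^i_\els\ra_{\tf} = 0$ with the fact that $\mz^i_\els$ solves the linearized extension equation \eqref{eq_lin_pro} up to a remainder supported away from the concentration point, the dangerous pairing $\int_M g_\ep'(\mw_\els)\Phi_\els\mz^i_\els\,dv_{\hh}$ can be rewritten as $-\int_M \tf\,\Phi_\els\mz^i_\els\,dv_{\hh}$ plus controllable remainders, and there the \emph{bounded} weight $\tf$ allows $\mz^i_\els$ to be estimated in $L^{2N/(N+2s)}(M)$ rather than $L^{2N/(N-2s)}(M)$, gaining an extra power of $\delta$. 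Combining these gains with $\|\pa_t\mz^i_\els\|_{\tf,\ep} = O(\ep^{-2\alpha})$ and with the inequalities $\gamma > \tfrac12$ for \eqref{eq_main_12} and $\gamma > \tfrac{1}{2(1-2s)}$ for \eqref{eq_main_21} recorded after Lemma \ref{lemma_error_est}, all error terms become $o(\ep)$ (resp. $o(\ep^{1/(1-2s)})$), uniformly for $(\ls) \in (\lambda_1^{-1}, \lambda_1) \times M$; the uniformity is inherited from Lemmas \ref{lemma_error_est}, \ref{lemma_lin_op_bdd} and Propositions \ref{prop_Phi}, \ref{prop_energy}. This establishes \eqref{eq-c1-1}--\eqref{eq-c1-4}.
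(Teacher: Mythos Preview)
Your overall architecture is reasonable, but the mechanism you propose for taming the $\Phi_\els$–dependent errors does not close the gap, and it differs in an essential way from what the paper actually does. Your key claim is that the orthogonality $\la\Phi_\els,\mz^i_\els\ra_{\tf}=0$ lets you rewrite the dangerous pairing $\int_M g_\ep'(\mw_\els)\Phi_\els\,\mz^i_\els$ as $-\int_M\tf\,\Phi_\els\,\mz^i_\els$ plus remainders, and that estimating $\mz^i_\els$ in $L^{2N/(N+2s)}(M)$ instead of $L^{2N/(N-2s)}(M)$ gains an extra power of $\delta$. But that gain is only $\delta^{2s}$ (one has $\|z^i_\delta\|_{L^{2N/(N+2s)}}=O(\delta^{2s-1})$ versus $\|z^i_\delta\|_{L^{2N/(N-2s)}}=O(\delta^{-1})$), so the rewritten term is of order $\ep^{\gamma}\cdot\delta^{2s-1}=\ep^{\gamma+(2s-1)\alpha}$. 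For \eqref{eq_main_12} with $\alpha=1/(2s)$ this is $\ep^{\gamma+1-1/(2s)}$, which is \emph{not} $o(\ep)$ when $s<1/2$ (indeed $\gamma<\alpha$ there). The same shortfall appears in your term (a): with $c_i=O(\ep^{\gamma+\alpha})$, $\|\Phi_\els\|_{\tf}=O(\ep^{\gamma})$ and $\|\pa_{y_k}\mz^i_\els\|_{\tf}=O(\ep^{-2\alpha})$, the crude bound is $O(\ep^{2\gamma-\alpha})$, again not $o(\ep)$ for small $s$.

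The paper avoids this by a different device. Instead of extracting the leading part of the $c_i$'s, it bounds them coarsely ($c_i=O(\ep^{\gamma+\alpha})$, Lemma \ref{lem-c}) and then \emph{replaces parameter derivatives by spatial derivatives}: Lemma \ref{lem-yw} and the lemma following Lemma \ref{lem-c} show that, up to acceptable errors, $I_\ep'(\mw+\Phi)[\pa_{y_k}(\mw+\Phi)]\big|_{y=0}$ equals $-I_\ep'(\mw+\Phi)[\pa_k(\chi_1W_\delta+\whp)]$. One then integrates by parts in $x_k$, which moves $\pa_k$ onto the smooth coefficients $\bg^{ij}\sqrt{|\bg|}$, $E(t)\sqrt{|\bg|}$, $\tf\sqrt{|\hh|}$, $\sqrt{|\hh|}$. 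Since each of these is constant at $\sigma_0$, their $\pa_k$ carries an $O(|(x,t)|)$ factor; paired with the concentrating profile this produces an extra $\delta=\ep^{\alpha}$. Consequently every $W_\delta$–$\whp$ cross term is $O(\ep^{\alpha+\gamma})=o(\ep)$ and every $\whp$–$\whp$ term is $O(\ep^{2\gamma})=o(\ep)$, uniformly (recall $\alpha+\gamma>1$ and $2\gamma>1$ for \eqref{eq_main_12}, and analogously for \eqref{eq_main_21}). This integration-by-parts source of the extra $\ep^{\alpha}$ is the point of the Esposito--Musso--Pistoia trick here; your orthogonality rewriting supplies only $\ep^{2s\alpha}$, which is strictly weaker. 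To repair your argument you would have to import precisely this spatial-derivative step, at which point the computation becomes the paper's.
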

Let us note that $\frac{\pa}{\pa \lambda} W_{\ep^{\alpha}\lambda}$ is a even function in $x \in \mr^N$ like the bubble $W_{\ep^{\alpha}\lambda}$ and has the same decaying property as $W_{\ep^{\alpha}\lambda}$.
From this fact we can see that all the error estimates in the proof of Proposition \ref{prop_energy} hold exactly in the same manner even if they are differentiated in the $\lambda$-variable.
This tells us that \eqref{eq_energy_est_res_1} and \eqref{eq_energy_est_res_3} hold in $C^1$-sense with respect to $\lambda$, i.e., \eqref{eq-c1-2} and \eqref{eq-c1-4} are true.
Thus it only remains to show that \eqref{eq_energy_est_res_1} and \eqref{eq_energy_est_res_3} also hold in $C^1$-sense with respect to $\sigma$, or equivalently, \eqref{eq-c1-1} and \eqref{eq-c1-3} are valid.

\medskip
We fix $\sigma_0 \in M$ and set $\sigma (y) = \exp_{\sigma_0}(y)$ for $y \in B^N(0,4r_0)$ (recall that $4r_0 > 0$ is selected to be smaller than the injectivity radius of $M$) for conciseness.
For the proof, we first need to establish several preliminary lemmas.
\begin{lemma}\label{lem-yw}
Recall the definition of the truncation function $\chi_1$ which was introduced in \eqref{eq_first_approx},
and the fact that any point $z \in X$ located sufficiently close to $\sigma_0 \in M$ can be described as $z = (\sigma(x), t)$ for some $x \in B^N(0,2r_0)$ and $t \in (0, r_0)$.
Also fix any $1 \le k \le N$.
\begin{enumerate}
\item
For any $z = (\sigma(x), t)$ near the point $\sigma_0$, it holds that
\begin{equation}\label{eq-yw-2}
\({\pa \over \pa y_k} \mw_{\dsy}\)_{y=0} (z) = - \chi_1(|(x,t)|) \pa_k W_{\delta}(x,t) + \varrho_1(\sigma(x),t),
\end{equation}
where $\varrho_1$ is a function on $X$ supported on the half ball $B_{\bg}^+(\sigma_0, 2r_0)$ (defined in \eqref{eq_B_bg}) satisfying $\|\varrho_1\|_{\tf, \ep}= O(\delta)$.
\item For any $z$ near the point $\sigma_0$ and $0 \le i \le N$, we have
\begin{equation}\label{eq-yw-3}
\({\pa \over \pa y_k} \mz_{\dsy}^{i}\)_{y=0}(z) = -\chi_1(|(x,t)|) \pa_k Z^i_{\delta}(x,t) + \varrho_2^i (\sigma(x),t)
\end{equation}
where $\varrho_2^i$ is a function on $X$ supported on $B_{\bg}^+(\sigma_0, 2r_0)$ such that $\|\varrho_2^i\|_{\tf, \ep}= O(1)$.
\end{enumerate}
\end{lemma}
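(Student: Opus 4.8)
The plan is to differentiate the explicit formula \eqref{eq_first_approx} (and its counterpart defining $\mz^i$) with respect to the blow-up point, peel off the ``translation'' term, and collect the rest into $\varrho_1$, respectively $\varrho_2^i$. I work in the Fermi coordinates $(x,t)$ of Lemma~\ref{lemma_metric_exp} centered at $\sigma_0$, so that a point $z$ close to $\sigma_0$ reads $z=(\sigma(x),t)$ and, by the definition of the distance function $d$ in Section~\ref{sec_scheme}, $d(z,\sigma(y))^2=d_M(\sigma(x),\sigma(y))^2+t^2$; in particular $d(z,\sigma_0)=|(x,t)|$. Two facts about $(M,\hh)$ near $\sigma_0$ carry the whole geometric load. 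First, since the curvature correction to the squared geodesic distance of $M$ is quadratic in each of its two arguments, $d_M(\sigma(x),\sigma(y))^2=|x-y|^2+O(|x|^2|y|^2)$, so that $\left({\pa\over\pa y_k}d(z,\sigma(y))\right)_{y=0}$ is a bounded function. Second, writing $E(x,y):=\exp_{\sigma(y)}^{-1}(\sigma(x))$ in a fixed smooth orthonormal frame near $\sigma_0$ (the parallel-transport frame along radial geodesics, which is the one implicit in \eqref{eq_first_approx}), one has $E(x,0)=x$ and $E(0,y)=-y$; combining these with the Jacobi field expansion $\left({\pa\over\pa x_i}E_m(x,y)\right)_{x=0}=\delta_{mi}+O(|y|^2)$ of the exponential map (equivalently, with Lemma~\ref{lemma_metric_exp}) forces every term of degree $\le 2$ in the Taylor expansion of $E(x,y)-(x-y)$ at the origin to vanish, whence
\[
\left({\pa\over\pa y_k}E_m(x,y)\right)_{y=0}=-\delta_{mk}+O(|x|^2).
\]

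Granting this, the chain rule applied to \eqref{eq_first_approx} gives, for $z=(\sigma(x),t)$ near $\sigma_0$,
\[
\left({\pa\over\pa y_k}\mw_{\dsy}\right)_{y=0}(z)=\chi_1'(|(x,t)|)\left({\pa\over\pa y_k}d\right)_{y=0}W_{\delta}(x,t)+\chi_1(|(x,t)|)\sum_{m=1}^{N}\pa_m W_{\delta}(x,t)\left({\pa\over\pa y_k}E_m\right)_{y=0}.
\]
By the second geometric fact the sum equals $-\chi_1(|(x,t)|)\pa_k W_{\delta}(x,t)$ plus a term bounded pointwise by $C|(x,t)|^2|\nabla W_{\delta}(x,t)|$; this is exactly the main term of \eqref{eq-yw-2} (indeed it equals $\mz^k_{\delta,\sigma_0}$) together with one piece of $\varrho_1$. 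The cut-off term is supported on the annulus $r_0<|(x,t)|<2r_0$, far from the concentration point. Hence $\|\varrho_1\|_{\hx}$ is controlled by $\left(\int_{\mr^{N+1}_+\setminus B^+_{r_0}}t^{1-2s}(W_{\delta}^2+|\nabla W_{\delta}|^2)\right)^{1/2}+\left(\int_{B^+_{2r_0}}t^{1-2s}|(x,t)|^2|\nabla W_{\delta}|^2\right)^{1/2}$, which Lemma~\ref{lemma_decay} (estimates \eqref{eq_decay_1} and \eqref{eq_decay_2}) controls; the $L^{q_{\ep}}(M)$ component of $\|\varrho_1\|_{\tf,\ep}$ is handled the same way from the pointwise decay of $w_{\delta}$, and the weight $E(\rho)$ contributes harmlessly since $|E(\rho)|\le C\rho^{1-2s}$ when $H=0$ and $|E(\rho)|\le C\rho^{-2s}$ otherwise (the latter absorbed as in \eqref{eq_bdry_est} when $s\in(0,1/2)$). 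This gives the bound $\|\varrho_1\|_{\tf,\ep}=O(\delta)$ stated in \eqref{eq-yw-2}, which is of strictly lower order than the main term (whose $\tf$-norm is $\asymp\delta^{-1}$ by Lemma~\ref{lemma_lin_ortho}), and $\varrho_1$ is supported on $B_{\bg}^+(\sigma_0,2r_0)$ as required.

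Part (2) is the same computation with $W_{\delta}$ replaced by $Z^i_{\delta}$, since $\mz^i_{\dsy}$ is assembled from $Z^i_{\delta}$ exactly as $\mw_{\dsy}$ is from $W_{\delta}$: differentiation yields \eqref{eq-yw-3}, i.e.\ $\left({\pa\over\pa y_k}\mz^i_{\dsy}\right)_{y=0}=-\chi_1(|(x,t)|)\pa_k Z^i_{\delta}+\varrho_2^i$, with $\varrho_2^i$ built from the far-annulus term $\chi_1'Z^i_{\delta}$ and the near-concentration remainder $C|(x,t)|^2|\nabla Z^i_{\delta}|$. Since $Z^i_{\delta}=\pa W_{\delta}/\pa\sigma_i$ for $i\ge1$ and $Z^0_{\delta}=\pa W_{\delta}/\pa\delta$ each carry one more derivative than $W_{\delta}$, the relevant weighted integrals --- governed by \eqref{eq_decay_3} and the $Z^i_{\delta}$-analogue of \eqref{eq_decay_2}, obtained by the same rescaling --- are larger by a factor $\delta^{-1}$ than in part (1), so $\|\varrho_2^i\|_{\tf,\ep}=O(1)$; the case $i=0$ is admissible because $Z^0_{\delta}$ is even in $x$ and satisfies decay bounds of the same shape as the $Z^i_{\delta}$ (with the logarithmic loss recorded in \eqref{eq_decay_3} when $N=2s+2$).

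I expect the one genuinely delicate point to be the second geometric fact: one must check, via the Jacobi field (equivalently, normal-coordinate) expansion of the exponential map, that the would-be linear-in-$x$ correction to $\left({\pa\over\pa y_k}E_m\right)_{y=0}$ cancels --- otherwise a term of the spurious size $|x|\,|\nabla W_{\delta}|$ would remain, whose weighted norm is too large to fit into $\varrho_1$ --- and then to match the resulting $|(x,t)|^2|\nabla W_{\delta}|$ and annular contributions against the case split in Lemma~\ref{lemma_decay} so as to land on the stated orders $O(\delta)$ and $O(1)$. Everything beyond that is routine differentiation of \eqref{eq_first_approx}, H\"{o}lder's inequality, the rescaling that normalizes the bubble, and the weighted trace inequality \eqref{eq_trace_mf}.
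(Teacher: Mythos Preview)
Your proposal is correct and follows essentially the same route as the paper's proof: both differentiate \eqref{eq_first_approx} via the chain rule, invoke the key geometric fact $\left(\partial_{y_k}E_m(x,y)\right)_{y=0}=-\delta_{mk}+O(|x|^2)$ (which the paper imports from \cite[(6.12)]{MP} while you sketch it via the Jacobi field expansion), peel off the translation term $-\chi_1\,\partial_k W_\delta$, and estimate the annular and $O(|x|^2)|\nabla W_\delta|$ remainders with Lemma~\ref{lemma_decay}. Your added remarks on the $L^{q_\ep}$ trace and the $E(\rho)$ weight are consistent with the rest of the paper; the only slight imprecision is that controlling $\|\varrho_1\|_{\hx}$ also requires bounding $\int t^{1-2s}|\nabla\varrho_1|^2$ (hence a term of the form $\int t^{1-2s}|x|^4|\nabla^2 W_\delta|^2$), but this follows by the same rescaling as \eqref{eq_decay_2} and the paper's proof glosses over it as well.
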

\begin{proof}
Using the chain rule and Lemma \ref{lem-pw-decay}, we compute
\begin{align*}
{\pa \over \pa y_k} \mw_{\ds(y)}(z) &=\chi_1(d (z,\sigma (y))) {\pa  W_{\delta} \over \pa y_k} (\mathcal{E}(y,x), t) +  {\pa \chi_1 \over \pa y_k} (d(z,\sigma (y))) W_{\delta}(\mathcal{E}(y,x),t)
\\
&= \chi_1 (d (z,\sigma (y))) \sum_{j=1}^{N} \left[ \pa_j W_{\delta}(\mathcal{E}(y,x),t) {\pa \mathcal{E}_j(y,x) \over \pa y_k} \right] + O\(\delta^{N-2s \over 2} \cdot {|\nabla \chi_1| (|(x-y,t)|) \over |(x-y,t)|^{N-2s}}\)
\end{align*}
where we set $\mathcal{E}(y,x) = \exp_{\sigma (y)}^{-1} (\sigma(x)) = (\mathcal{E}_1(y,x), \cdots, \mathcal{E}_N(y,x)) \in \mr^N$.
Therefore replacing $(\sigma(x), t)$ with $(\exp_{\sigma(y)}(x), t)$ in the previous inequalities, we obtain
\begin{equation}\label{eq-yw-1}
\begin{aligned}
&{\pa \over \pa y_k} \mw_{\dsy} \( \exp_{\sigma (y)}(x), t\)
\\
&= \chi_1 \(d ((\exp_{\sigma (y)}(x),t),\sigma (y))\) \sum_{j=1}^{N} \left[ \pa_j W_{\delta}(x,t) {\pa \mathcal{E}_j \over \pa y_k} \(y, \sigma^{-1}\(\exp_{\sigma (y)}\)(x)\)\right] + O\(\delta^{N-2s \over 2} {|\nabla \chi_1| (|(x,t)|) \over |(x,t)|^{N-2s}}\).
\end{aligned}
\end{equation}
By (6.12) of \cite{MP}, it holds that
\[\left. {\pa \mathcal{E}_j \over \pa y_k} \(y, \sigma^{-1}\(\exp_{\sigma (y)}\)(x)\) \right|_{y=0} = \frac{\pa \mathcal{E}_j}{\pa y_k} (0,x) = -\delta_{kj} + O\(|x|^2\).\]
Taking $y=0$ on the both sides of \eqref{eq-yw-1} and inserting the above in the result yields
\begin{multline*}
\(\frac{\pa}{\pa y_k} \mw_{\dsy}\)_{y=0} \( \exp_{\sigma_0}(x), t\) \\
= - {\chi_1}(|(x,t)|) \pa_k W_{\delta}(x,t)+ \underbrace{\chi_1(|(x,t)|) \sum_{j=1}^{N} \left[ \pa_j W_{\delta}(x, t) \right] O\(|x|^2\) + O\(\delta^{N-2s \over 2} {|\nabla \chi_1| (|(x,t)|) \over |(x,t)|^{N-2s}}\)}_{:= \varrho_1 (x,t)}.
\end{multline*}
We readily find that $\|\varrho_1\|_{\tf, \ep}= O(\delta)$, and thus arrive at the first equality \eqref{eq-yw-2}.

The same argument can be applied to prove the second equality \eqref{eq-yw-3}.
The proof is completed.
\end{proof}

We remind from Proposition \ref{prop_Phi} that $\Phi_{\els}$ solves equation \eqref{eq_inter_1}. Hence for some constants $c_i \in \mr$, $0\le i \le N$, we have
\begin{equation}\label{eq-phi-c}
\Phi_\els = - \mw_\els + i_{\tf}^*\(i\(g_{\ep}(\mw_\els + \Phi_\els)\)\)  + \sum_{i=0}^{N} c_i \mz_{\els}^{i}.
\end{equation}
\begin{lemma}\label{lem-c}
In \eqref{eq-phi-c}, we have that $c_i = O(\ep^{\gamma +\alpha})$ for each $0 \le i \le N$.
\end{lemma}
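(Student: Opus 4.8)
The plan is to read off the coefficients $c_i$ from the $\la\,\cdot\,,\cdot\,\ra_{\tf}$-orthogonal projection of \eqref{eq-phi-c} onto $K_\ls^\ep$, and then to convert a bound on the \emph{length} of this projection into bounds on the \emph{individual} $c_i$ by means of the almost-orthogonality relation of Lemma \ref{lemma_lin_ortho}. Since $\Phi_\els\in(K_\ls^\ep)^\perp$ by Proposition \ref{prop_Phi}, applying $\Pi_\ls^\ep$ to \eqref{eq-phi-c} yields
\[
\sum_{i=0}^N c_i\,\mz_\els^i = \Pi_\ls^\ep\Big[\mw_\els - i_{\tf}^*\big(i\big(g_\ep(\mw_\els+\Phi_\els)\big)\big)\Big],
\]
and, because $\Pi_\ls^\ep$ is norm-nonincreasing for $\|\cdot\|_{\tf}$ (it is the $\la\,\cdot\,,\cdot\,\ra_{\tf}$-orthogonal projection onto the finite-dimensional subspace $K_\ls^\ep$),
\[
\Big\|\sum_{i=0}^N c_i\,\mz_\els^i\Big\|_{\tf} \le \Big\|\mw_\els - i_{\tf}^*\big(i\big(g_\ep(\mw_\els+\Phi_\els)\big)\big)\Big\|_{\tf,\ep}.
\]

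I would then bound the right-hand side by $O(\ep^\gamma)$. Splitting it as $\big[\mw_\els - i_{\tf}^*(i(g_\ep(\mw_\els)))\big] - i_{\tf}^*\big(i\big(g_\ep(\mw_\els+\Phi_\els)-g_\ep(\mw_\els)\big)\big)$, the first bracket is $O(\ep^\gamma)$ directly from Lemma \ref{lemma_error_est}. For the second summand I would use the elementary inequality $|g_\ep(a+b)-g_\ep(a)|\le C\big(|a|^{p-1\pm\ep}+|b|^{p-1\pm\ep}\big)|b|$ (plus the trivial extra linear term when \eqref{eq_main_21} is treated), Hölder's inequality, the bound $\|w_\els\|_{L^{q_\ep}(\mr^N)}=O(1)$, the estimate $\|\Phi_\els\|_{\tf,\ep}=O(\ep^\gamma)$ of Proposition \ref{prop_Phi} together with the trace inequality \eqref{eq_trace_mf}, and the continuity of $i_{\tf}^*$ (Lemma \ref{lem-equiv}, and Lemma \ref{lemma_trace} for the $L^{q_\ep}(M)$-component of $\|\cdot\|_{\tf,\ep}$ in the supercritical case). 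This reproduces, essentially verbatim, the computations in the proofs of Lemma \ref{lemma_error_est} and Proposition \ref{prop_Phi}, the genuinely nonlinear remainder contributing only the higher-order term $O(\ep^{(p\pm\ep)\gamma})$. Hence $\big\|\sum_i c_i\,\mz_\els^i\big\|_{\tf}=O(\ep^\gamma)$.

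Finally I would expand the square and invoke Lemma \ref{lemma_lin_ortho}. With $\delta=\ep^\alpha\lambda$ and $\lambda$ confined to the fixed compact interval $(\lambda_1^{-1},\lambda_1)$, relation \eqref{eq_lin_ortho} shows that for $\ep$ small the off-diagonal terms are dominated by the diagonal ones (by the arithmetic-geometric mean inequality), so
\[
\Big\|\sum_{i=0}^N c_i\,\mz_\els^i\Big\|_{\tf}^2 = \sum_{i,j=0}^N c_ic_j\,\la\mz_\els^i,\mz_\els^j\ra_{\tf} \ge \frac{1}{2\delta^2}\Big(\min_{0\le i\le N}\beta_i\Big)\sum_{i=0}^N c_i^2.
\]
Consequently $\sum_i c_i^2 = O(\delta^2\ep^{2\gamma})$, i.e. $c_i=O(\delta\,\ep^\gamma)=O(\ep^{\alpha+\gamma})$, uniformly in $(\ls)\in(\lambda_1^{-1},\lambda_1)\times M$, which is the assertion.

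I expect the only substantive work to be in the middle step: carrying the nonlinear-difference estimate through the $\|\cdot\|_{\tf,\ep}$-norm in the supercritical regime, which must be done by the interpolation-plus-Lemma \ref{lemma_trace} argument of Lemma \ref{lemma_error_est} rather than by a crude duality bound. The decisive structural point is that the extra factor $\delta=O(\ep^\alpha)$ beyond the $O(\ep^\gamma)$ error — which is what makes the $c_i$ genuinely small — originates solely from the fact that the $\mz_\els^i$ are not normalized but carry $\|\cdot\|_{\tf}$-norm of order $\delta^{-1}$; equivalently, it is the $O(\delta^2)$ size of the inverse Gram matrix, which must not be discarded. In the subcritical and critical cases $\mh_\ep=\hx$ and $\|\cdot\|_{\tf,\ep}=\|\cdot\|_{\tf}$, so no supercritical bookkeeping is needed there.
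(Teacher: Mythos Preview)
Your proposal is correct and follows essentially the same approach as the paper. The paper tests \eqref{eq-phi-c} against each $\mz_{\ds}^j$ individually, obtaining the linear system $c_i\,\ep^{-2\alpha}(\beta_i+o(1))+\sum_{j\ne i}c_j\,o(\ep^{-2\alpha})=O(\ep^{\gamma-\alpha})$ and solving it, whereas you bound the $\|\cdot\|_{\tf}$-norm of the projection and invoke the Gram-matrix lower bound; both routes rest on exactly the same ingredients --- Lemma~\ref{lemma_error_est} for the error term, Lemma~\ref{lemma_lin_ortho} for the $\delta^{-2}$-scaling of $\la\mz_{\ds}^i,\mz_{\ds}^j\ra_{\tf}$, and a mean-value/H\"older estimate on $g_\ep(\mw_{\ds}+\Phi_{\ds})-g_\ep(\mw_{\ds})$.
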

\begin{proof}
Fixing any $i \in \{0, \cdots, N\}$ and taking the inner product $\la \cdot , \mz_{\ds}^i \ra_{\tf}$ on \eqref{eq-phi-c}, we get that
\begin{equation}\label{eq-c-1}
\begin{aligned}
&c_i \la \mz_{\ds}^i, \mz_{\ds}^i\ra_{\tf} + \sum_{j \ne i} c_j \la \mz_{\ds}^j, \mz_{\ds}^i\ra_{\tf} = \la \mw_{\ds}, \mz_{\ds}^{i} \ra_{\tf} - \int_{M} g_\ep (\mw_{\ds} + \Phi_{\ds} ) \mz_{\ds}^i
\\
&= \( \la \mw_{\ds}, \mz_{\ds}^{i} \ra_{\tf} - \int_{M} g_{\ep}(\mw_{\ds}) \mz_{\ds}^i\) + \(\int_{M} (g_{\ep}(\mw_{\ds}) - g_{\ep}(\mw_{\ds}+\Phi_{\ds}))\mz_{\ds}^i\)
\end{aligned}
\end{equation}
where $\delta = \ep^{\alpha} \lambda$.
Replacing $\Phi$ by $\mz_{\ds}^i$ in the proof of Lemma \ref{lemma_error_est} and using the estimate $\left\|\mz_{\ds}^i\right\|_{\tf,\ep} = O(\ep^{-\alpha})$ instead of $\|\Phi \|_{\tf} = O(1)$, we may deduce that
\[\la \mw_{\ds}, \mz_{\ds}^{i} \ra_{\tf} - \int_{M} g_{\ep}(\mw_{\ds}) \mz_{\ds}^i = O\(\ep^{\gamma -\alpha}\).\]
Next we apply H\"older's inequality to ascertain
\begin{align*}
\int_{M} (g_{\ep}(\mw_{\ds}) -g_{\ep}(\mw_{\ds} + \Phi_{\ds}))\mz_{\ds}^i &= O \( \left\|g_{\ep}' (\mw_{\ds})\right\|_{L^{\frac{N}{2s}}(M)}
\cdot \left\| \Phi_{\ds}\right\|_{L^{\frac{2N}{N-2s}}(M)}
\cdot \left\| \mz_{\ds}^i\right\|_{L^{\frac{2N}{N-2s}}(M)}\) \\
&= O\(\ep^{\gamma -\alpha}\).
\end{align*}
Combining these two estimates and \eqref{eq_lin_ortho}, we derive from \eqref{eq-c-1} that
\[c_i \ep^{-2\alpha} + \sum_{j \ne i} c_j o\(\ep^{-2\alpha}\) = O\(\ep^{\gamma-\alpha}\),\]
which yields the desired estimate $c_i = O(\ep^{\gamma + \alpha})$.
\end{proof}

Recall a fixed point $\sigma_0 \in M$ and the map $\sigma (y) = \exp_{\sigma_0}(y)$ defined for $y \in B^N(0,4r_0)$.
In the next lemma, we shall replace the derivatives $\pa_{y_k}\mw_{\dsy}$ and  $\pa_{y_k}\Phi_{\dsy}$ with respect to the parameters
by the derivatives $\pa_k\mw_{\dsy}$ and $\pa_k \Phi_{\dsy}$ with respect to the spatial variables in the expression of $\pa_{y_k} J_{\ep}(\lambda, \sigma(y))|_{y=0}$.
This will permit us to take integration by parts to evaluate $\pa_{y_k} J_{\ep}(\lambda, \sigma(y))|_{y=0}$.
This idea was introduced in \cite{EMP} where existence of the bubbling solutions for the two dimensional Lane-Emden-Fowler equation was examined.

Take a cut-off function $\chi_2:(0,\infty) \to [0,1]$ such that $\chi_2=1$ on $(0,2r_0)$ and $0$ on $(4r_0, \infty)$.
Then we see that $\chi_2 = 1$ on supp$(\chi_1)$.
We also set a function $\whp_{\ds}: \overline{\mr^{n+1}_{+}} \to \mr$ by
\[ \whp_{\ds}(x,t) = \chi_2 \left(d((\sigma(x),t), \sigma)\right) \Phi_{\ds} (\sigma(x),t),\]
which satisfies supp$(\whp_{\ds}) \subset B^+_{4r_0}$, and a function $\wtp_{\ds}^k : X \to \mr$ ($k = 1, \cdots, N$) by
\[\wtp_{\ds}^k (z) = \begin{cases}
\(\pa_k \whp_{\ds}\)(x,t) &\text{if } z \in X \text{ is near $M$ so that it can be written as } z = (\exp_{\sigma}(x),t), \\
0 &\text{otherwise}.
\end{cases}\]
Then we have the following result.
\begin{lemma}
We have
\[\left. I_{\ep}'(\mw_{\dsy} + \Phi_{\dsy}) \pa_{y_k} \Phi_{\dsy}\right|_{y=0} = - I_{\ep}'(\mw_{\dsz} + \Phi_{\dsz}) \wtp_{\dsz}^k + O\(\ep^{2\gamma}\)\]
and
\begin{multline*}
\left.I_{\ep}' (\mw_{\dsy} + \Phi_{\dsy})\pa_{y_k} \mw_{\dsy}\right|_{y=0} \\
= - I_{\ep}' (\mw_{\dsz} + \Phi_{\dsz}) \pa_k \(\chi_1(|(x,t)|) W_{\delta}(x,t)\) + \begin{cases}
o(\ep) &\text{for } \eqref{eq_main_12},\\
O\(\ep^{\gamma+\alpha}\) &\text{for } \eqref{eq_main_21},
\end{cases} \end{multline*}
where $z = (\sigma(x), t) \in X$ satisfies $d(z,\sigma_0) = |(x,t)| \le 2r_0$.
\end{lemma}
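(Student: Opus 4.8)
The plan is to exploit the fact that $\mw_{\dsy}+\Phi_{\dsy}$ solves the intermediate equation \eqref{eq_inter_1}. Rewriting \eqref{eq-phi-c} and invoking the defining property of $i_{\tf}^*$, for every $V\in\mh_{\ep}$ one obtains the identity $I_{\ep}'(\mw_{\dsy}+\Phi_{\dsy})V=\sum_{i=0}^N c_i\la\mz_{\dsy}^i,V\ra_{\tf}$, where the $c_i$ are the multipliers appearing in \eqref{eq-phi-c}; at $y=0$ Lemma \ref{lem-c} gives $c_i=O(\ep^{\gamma+\alpha})$. Since $(\ls)\mapsto\Phi_{\els}$ is $C^1$ by Proposition \ref{prop_red}, both assertions then reduce to estimating the pairings $\la\mz_{\dsz}^i,\pa_{y_k}\Phi_{\dsy}|_{y=0}\ra_{\tf}$ and $\la\mz_{\dsz}^i,\pa_{y_k}\mw_{\dsy}|_{y=0}\ra_{\tf}$. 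The bookkeeping hinges on the fact that $\|\mz_{\dsz}^i\|_{\tf}=O(\delta^{-1})=O(\ep^{-\alpha})$ by \eqref{eq_lin_ortho}, so the $\ep^{\alpha}$ saved in $c_i$ is only just enough to absorb that growth and no more; this is why the cancellations below are indispensable.

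For the first identity I would differentiate the orthogonality $\la\Phi_{\dsy},\mz_{\dsy}^i\ra_{\tf}\equiv0$ in $y_k$ at $y=0$, getting $\la\mz_{\dsz}^i,\pa_{y_k}\Phi_{\dsy}|_{y=0}\ra_{\tf}=-\la\Phi_{\dsz},\pa_{y_k}\mz_{\dsy}^i|_{y=0}\ra_{\tf}$, and substitute Lemma \ref{lem-yw}(2), which turns $\pa_{y_k}\mz_{\dsy}^i|_{y=0}$ into $-\chi_1(|(x,t)|)\pa_kZ_{\delta}^i+\varrho_2^i$ with $\|\varrho_2^i\|_{\tf,\ep}=O(1)$. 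On the other hand, since $\chi_2\equiv1$ on $\text{supp}\,\chi_1\supset\text{supp}\,\mz_{\dsz}^i$, the auxiliary function $\wtp_{\dsz}^k$ equals the \emph{spatial} derivative $\pa_k\Phi_{\dsz}$ on $\text{supp}\,\mz_{\dsz}^i$ (it is precisely to keep the objects inside $\mh_{\ep}$ that the cutoff $\chi_2$ is built in), so $\la\mz_{\dsz}^i,\wtp_{\dsz}^k\ra_{\tf}=\la\mz_{\dsz}^i,\pa_k\Phi_{\dsz}\ra_{\tf}$. Integrating this by parts in the tangential variable $x_k$ — no boundary term appears, and the only extra contributions carry a derivative of $\sqrt{|\bg|}$, $\bg^{ij}$, $E(\rho)\sqrt{|\bg|}$ or $\tf\sqrt{|\hh|}$, each of which is $O(|(x,t)|)$ or bounded near $\sigma_0$ in the Fermi coordinates by Lemma \ref{lemma_metric_exp} and \eqref{eq_E_local_2} — yields $\la\mz_{\dsz}^i,\pa_k\Phi_{\dsz}\ra_{\tf}=-\la\pa_k\mz_{\dsz}^i,\Phi_{\dsz}\ra_{\tf}+R_i$, where Cauchy--Schwarz together with the decay estimates \eqref{eq_decay_2}--\eqref{eq_decay_3} and \eqref{eq_phi_norm} bounds $R_i$ by $O(\ep^{\gamma-\alpha})$. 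Expanding $\pa_k\mz_{\dsz}^i=\chi_1\pa_kZ_{\delta}^i+(\pa_k\chi_1)Z_{\delta}^i$, the two copies of $\la\chi_1\pa_kZ_{\delta}^i,\Phi_{\dsz}\ra_{\tf}$ cancel and I am left with
\[
\la\mz_{\dsz}^i,\;\pa_{y_k}\Phi_{\dsy}|_{y=0}+\wtp_{\dsz}^k\ra_{\tf}=-\la\varrho_2^i,\Phi_{\dsz}\ra_{\tf}-\la(\pa_k\chi_1)Z_{\delta}^i,\Phi_{\dsz}\ra_{\tf}+R_i=O(\ep^{\gamma-\alpha}),
\]
using that $(\pa_k\chi_1)Z_{\delta}^i$ is supported in the fixed annulus $\{r_0\le|(x,t)|\le2r_0\}$ where \eqref{eq_decay_3} makes it negligible. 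Multiplying by $c_i=O(\ep^{\gamma+\alpha})$ and summing over $i$ gives the claimed remainder $O(\ep^{2\gamma})$.

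For the second identity no integration by parts is needed: Lemma \ref{lem-yw}(1) writes $\pa_{y_k}\mw_{\dsy}|_{y=0}=-\chi_1(|(x,t)|)\pa_kW_{\delta}+\varrho_1$ with $\|\varrho_1\|_{\tf,\ep}=O(\delta)$, and the algebraic splitting $\chi_1\pa_kW_{\delta}=\pa_k(\chi_1W_{\delta})-(\pa_k\chi_1)W_{\delta}$ decomposes $\la\mz_{\dsz}^i,\pa_{y_k}\mw_{\dsy}|_{y=0}\ra_{\tf}$ into $-\la\mz_{\dsz}^i,\pa_k(\chi_1W_{\delta})\ra_{\tf}$ plus two errors. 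The errors are controlled using $\|\mz_{\dsz}^i\|_{\tf}=O(\delta^{-1})$ together with $\|\varrho_1\|_{\tf,\ep}=O(\delta)$ and, for the one containing $(\pa_k\chi_1)W_{\delta}$, with \eqref{eq_decay_1} on the annulus; after multiplying by $c_i=O(\ep^{\gamma+\alpha})$ and summing they are $O(\ep^{\gamma+\alpha})$, which is $o(\ep)$ for problems \eqref{eq_main_12} (there $\gamma>1/2$ and $\alpha=1/(2s)$ force $\gamma+\alpha>1$) and is exactly the asserted order $O(\ep^{\gamma+\alpha})$ for problem \eqref{eq_main_21}. The main term sums, by the identity of the first paragraph, to $-I_{\ep}'(\mw_{\dsz}+\Phi_{\dsz})\pa_k(\chi_1(|(x,t)|)W_{\delta}(x,t))$, which is the claim.

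The step I expect to be the real obstacle is the error accounting in the first identity. Pairing $\Phi_{\dsz}$ crudely against $\chi_1\pa_kZ_{\delta}^i$, whose $\tf$-norm is $O(\delta^{-2})$, only produces $O(\ep^{2\gamma-\alpha})$, which is useless and for small $s$ even unbounded; hence the cancellation of the two $\la\chi_1\pa_kZ_{\delta}^i,\Phi_{\dsz}\ra_{\tf}$ terms after the tangential integration by parts is essential rather than cosmetic, and extracting it forces one to identify $\wtp_{\dsz}^k$ with $\pa_k\Phi_{\dsz}$ on the correct support and to control the curved-manifold correction terms $R_i$ with care. A secondary nuisance is that several weighted norms of $Z_{\delta}^0$ and of the truncation remainders $\varrho_1,\varrho_2^i$ degrade in low dimensions and acquire logarithms at $N=2s+2$; one has to check, along the lines of the case distinction behind \eqref{eq_error_est_0}, that these degradations never exceed the rate $\ep^{-\alpha}$, so that the gain in $c_i$ still prevails.
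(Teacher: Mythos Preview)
Your proposal is correct and follows essentially the same route as the paper: both arguments express $I_{\ep}'(\mw_{\dsy}+\Phi_{\dsy})V$ as $\sum_i c_i\la\mz_{\dsy}^i,V\ra_{\tf}$ via \eqref{eq-phi-c}, differentiate the orthogonality relation to throw $\pa_{y_k}$ onto $\mz_{\dsy}^i$, invoke Lemma \ref{lem-yw} to replace the parametric derivative by the spatial one, and then integrate by parts in $x_k$ so that the dangerous $\chi_1\pa_kZ_{\delta}^i$ contributions cancel, leaving only remainders of size $O(\ep^{\gamma-\alpha})$ to be absorbed by $c_i=O(\ep^{\gamma+\alpha})$. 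The paper carries this out by writing the inner product explicitly as three integrals and comparing each to its counterpart in $\la\mz_{\dsz}^i,\wtp_{\dsz}^k\ra_{\tf}$, whereas you package the same computation as a single tangential integration by parts with metric correction terms $R_i$; the two are the same argument in different notation.
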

\begin{proof}
From \eqref{eq-phi-c} and the fact that $\la \mz^i_\ds, \Phi_\ds \ra_{\tf} = 0$ for all $\sigma \in M$, we see that
\begin{equation}\label{eq-I'-yk}
\begin{aligned}
&I_{\ep}' (\mw_{\dsy} + \Phi_{\dsy}) (\pa_{y_k} \Phi_{\dsy})\\
&= \sum_{i=0}^N c_i \la \mathcal{Z}_{\dsy}^{i}, \pa_{y_k} \Phi_{\dsy} \ra_{\tf} = - \sum_{i=0}^{N} c_i \la \pa_{y_k} \mz_{\dsy}^{i}, \Phi_{\dsy} \ra_{\tf}
\\
&= - \sum_{i=0}^{N} c_i \left[ \kappa_s \int_{\mr^{N+1}_{+}} t^{1-2s} \(\nabla \pa_{y_k} \mz_{\dsy}^{i} (\sigma(x),t), \nabla \whp_{\dsy}(x,t)\)_{\bg} \sqrt{|\bg|}dxdt \right. \\
&\hspace{50pt} + \kappa_s \int_{\mr^{N+1}_{+}} E(t) \pa_{y_k} \mz_{\dsy}^{i} (\sigma(x),t)\whp_{\dsy}(x,t) \sqrt{|\bg|}dxdt \\
&\hspace{50pt} \left.+ \int_{\mr^N} \tf(\sigma(x)) \pa_{y_k}\mz_{\dsy}^{i} (\sigma(x),0)\whp_{\dsy}(x,0) \sqrt{|\hh|} dx \right].
\end{aligned} \end{equation}
On the other hand,
\begin{equation}\label{eq-I'-xk}
\begin{aligned}
&I_{\ep}'(\mw_{\dsz} + \Phi_{\dsz}) \wtp_{\dsz}^k = \sum_{i=0}^N c_i \la \mz_{\dsz}^i, \wtp_{\dsz}^k \ra_{\tf}
\\
&= \sum_{i=0}^{N} c_i \left[ \kappa_s \int_{\mr^{N+1}_{+}} t^{1-2s} \(\nabla\(\chi_1(|(x,t)|)Z^i_{\delta}(x,t)\), \nabla\(\pa_k \whp_{\dsz}(x,t)\)\)_{\bg} \sqrt{|\bg|}dxdt \right. \\
&\hspace{40pt} + \kappa_s \int_{\mr^{N+1}_{+}} E(t) \chi_1(|(x,t)|)Z^i_{\delta}(x,t)\pa_k \whp_{\dsz}(x,t) \sqrt{|\bg|}dxdt \\
&\hspace{40pt} \left.+ \int_{\mr^N} \tf(\sigma(x)) \chi_1(|x|) z^i_{\delta}(x) \pa_k \whp_{\dsz}(x,0) \sqrt{|\hh|} dx \right].
\end{aligned} \end{equation}

Let us compare \eqref{eq-I'-yk}, for which $y = 0$ is taken, and \eqref{eq-I'-xk}.
Employing \eqref{eq_E_local_2}, \eqref{eq_bdry_est}, \eqref{eq-yw-3} and the observation that $\pa_k \sqrt{|\bg|}  = O(|(x,t)|)$
which stems from Lemma \ref{lemma_metric_exp}, and applying the integration by parts, we have
\begin{align*}
&\left| \int_{\mr^{N+1}_{+}} E(t) \(\pa_{y_k} \mz_{\dsy}^{i} (\sigma(x),t)\)_{y=0} \whp_{\dsz}(x,t) \sqrt{|\bg|}dxdt \right.
\\
&\quad \left. - \int_{\mr^{N+1}_{+}} E(t) \chi_1(|(x,t)|)Z^i_{\delta}(x,t)\pa_k \whp_{\dsz}(x,t) \sqrt{|\bg|}dxdt \right|
\\
&\le \int_{\mr^{N+1}_{+}} O\(t^{-2s}\) ((|\chi_1| + |\pa_k \chi_1|)(|(x,t)|) + O(|(x,t)|)) \left|Z_{\delta}^i(x,t) \whp_{\dsz}(x,t)\right| dx dt
\\
&\quad + \left| \int_{\mr^{N+1}_{+}} E(t) \varrho_2^i (x,t) \whp_{\dsz}(x,t) \sqrt{|\bg|} dx dt \right|
\\
&\le C \left\|t^{-2s} Z_{\delta}^i \right\|_{L^2 (B^+_{2r_0})} \left\|\Phi_{\dsz} \right\|_{\tf}
+ C \left\|\varrho_2^i\right\|_{\tf} \cdot \left\|\Phi_{\dsz} \right\|_{\tf} 
= O\(\ep^{\gamma-\alpha}\)
\end{align*}
for $s \in (0,1/2)$. If $s \in [1/2,1)$ and $H = 0$, the above term has a better bound $O(\ep^{\gamma})$. Similarly,
\begin{align*}
&\left| \int_{\mr^N} \tf(\sigma(x)) \(\pa_{y_k} \mz_{\dsy}^{i} (\sigma(x),0)\)_{y=0} \whp_{\dsz}(x,0) \sqrt{|\hh|} dx \right. \\
&\quad \left. - \int_{\mr^N} \tf(\sigma(x)) \chi_1(|x|) z^i_{\delta}(x) \pa_k \whp_{\dsz}(x,0) \sqrt{|\hh|} dx \right|
\\
&\le \int_{\mr^N} \left|\pa_k\(\tf(\sigma(x)) \chi_1(|x|) \sqrt{|\hh|}\)\right| \left|z^i_{\delta}(x) \whp_{\dsz}(x,0)\right| dx + \int_{\mr^N} \left|\tf(\sigma(x)) \varrho_2^i (x,0) \whp_{\dsz}(x,0)\right| \sqrt{|\hh|} dx
\\
&\le C \(\left\|z^i_{\delta}\right\|_{L^{\frac{2N}{N+2s}}(B^N(0,4r_0))} \left\|\Phi_{\dsz} \right\|_{L^{\frac{2N}{N-2s}}(M)} + \left\|\varrho_2^i (\cdot,0)\right\|_{L^{\frac{2N}{N-2s}}(M)} \left\|\Phi_{\dsz} \right\|_{L^{\frac{2N}{N-2s}}(M)}\)
\\
&= O\(\ep^{(2s-1)\alpha + \gamma} + \ep^{\gamma}\).
\end{align*}
Finally we use Lemmas \ref{lem-yw}, \ref{lemma_metric_exp} and \ref{lemma_decay} to get 
\begin{align*}
&\left| \int_{\mr^{N+1}_{+}} t^{1-2s} \(\nabla \pa_{y_k} \mz_{\dsy}^{i} (\sigma(x),t), \nabla \whp_{\dsy}(x,t)\)_{\bg} \sqrt{|\bg|}dxdt \right.
\\
&\quad \left. - \int_{\mr^{N+1}_{+}} t^{1-2s} \(\nabla\(\chi_1(|(x,t)|)Z^i_{\delta}(x,t)\), \nabla\(\pa_k \whp_{\dsz}(x,t)\)\)_{\bg} \sqrt{|\bg|}dxdt \right|
\\
&\le \int_{\mr_{+}^{N+1}} t^{1-2s} \left[ \left|\nabla \(\chi_1(|(x,t)|)Z^i_{\delta}(x,t)\) \right| O(|(x,t)|) + \left|\nabla \(\pa_k \chi_1(|(x,t)|) Z_{\delta}^i(x,t)\)\right| \right] \cdot \left|\nabla \whp_{\dsz}(x,t)\right| dxdt
\\
&\quad + \int_{\mr_{+}^{N+1}} t^{1-2s} \left|\nabla \varrho_2^i (x,t) \right| \cdot \left|\nabla \whp_{\dsz}(x,t)\right| \sqrt{|\bg|} dxdt = O\(\ep^{\gamma}\).
\end{align*}

Combining the above three estimates with Lemma \ref{lem-c}, we reach at
\begin{align*}
I_{\ep}' (\mw_{\ds} + \Phi_{\ds}) \(\pa_{y_k} \Phi_{\dsy}\) + I_{\ep}' (\mw_{\ds}+ \Phi_{\ds}) \wtp_{\sigma_0}^k
&= O\(\ep^{\gamma +\alpha}\) \cdot O\(\ep^{\gamma-\alpha} + \ep^{(2s-1)\alpha + \gamma}
\)
= O\(\ep^{2\gamma}\).
\end{align*}
It proves the first identity.

We turn to prove the second identity. For this we apply Lemmas \ref{lemma_decay}, \ref{lem-yw} and \ref{lem-c} to certify
\begin{align*}
&I_{\ep}' (\mw_{\dsy}+ \Phi_{\dsy})\pa_{y_k}\mw_{\dsy} + I_{\ep}' (\mw_{\dsz} + \Phi_{\dsz}) \pa_k \(\chi_1(|(x,t)|) W_{\delta}(x,t)\)\\
& =\sum_{i=0}^N c_i \la \mz_{\dsz}^{i}, \left. \pa_{y_k} \mw_{\dsy} \right|_{y=0} + \chi_1(|(x,t)|) \pa_k W_{\delta}(x,t) + \chi_1'(|(x,t)|) \pa_k|(x,t)| W_{\delta}(x,t) \ra_{\tf}
\\
&= \sum_{i=0}^N c_i \la \mz_{\ds_0}^i, \varrho_1^i + \chi_1'(|(x,t)|) \pa_k|(x,t)| W_{\delta}(x,t) \ra_{\tf}
\\
&= O \(\sum_{i=0}^N |c_i| \left\|\mz_{\ds_0}^i\right\|_{\tf}  \cdot \left\|\varrho_1^i\right\|_{\tf}\) + \begin{cases}
O\(|c_i|\ep^{1-\alpha}|\log\ep|\) &\text{for } \eqref{eq_main_12},\\
o(|c_i|) &\text{for } \eqref{eq_main_21}
\end{cases} \\
&= O\(\ep^{\gamma + \alpha} \ep^{-\alpha}\ep^{\alpha}\) + \begin{cases}
O\(\ep^{1+\gamma}|\log\ep|\) &\text{for } \eqref{eq_main_12},\\
o\(\ep^{\gamma+\alpha}\) &\text{for } \eqref{eq_main_21}
\end{cases}
= \begin{cases}
o(\ep) &\text{for } \eqref{eq_main_12},\\
O\(\ep^{\gamma+\alpha}\) &\text{for } \eqref{eq_main_21}.
\end{cases}
\end{align*}
Here we also used
\[\la \mz_{\ds_0}^i, \chi_1'(|(x,t)|) \pa_k|(x,t)| W_{\delta}(x,t) \ra_{\tf} = O\(\delta^{N-2s-1}|\log \delta|\) 
= \begin{cases} O\(\delta^{2s-1}|\log \delta|\) &\text{if } N > 4s,\\
o(1) &\text{if } N > 2s + 1.
\end{cases}\]
Our assertion is proved.
\end{proof}
Now we are ready to establish the desired $C^1$-estimates of the reduced energy functional $J_{\ep}$.
\begin{proof}[Proof of Proposition \ref{prop_energy_1}]
For the sake of simplicity, we identify $\wtp_{\dsz}^k = \pa_k \whp_{\dsz}$ and use an abbreviation $(\chi_1 \pa_k W_{\delta})(z) = \chi_1(|(x,t)|) W_{\delta}(x,t)$ defined for $z = (\sigma(x), t) \in X$ near $\sigma_0 \in M$.
We may assume that the domain of these functions is the Euclidean space $\mr^{N+1}_+$.
By the previous lemma, we have
\begin{multline*}
\left. I_{\ep}'(\mw_{\dsy} + \Phi_{\dsy})\(\pa_{y_k} \mw_{\dsy} + \pa_{y_k} \Phi_{\dsy}\)\right|_{y=0}
\\
= - I_{\ep}'(\mw_{\ds} + \Phi_{\ds}) \pa_k \(\chi_1W_{\delta} + \whp_{\dsz}\) + \begin{cases}
o(\ep) &\text{for problems } \eqref{eq_main_12},\\
o\(\ep^{\alpha}\) &\text{for problem } \eqref{eq_main_21}.
\end{cases}.
\end{multline*}
Let us decompose
\[I_{\ep}'(\mw_{\dsz} + \Phi_{\dsz}) \pa_k \(\chi_1W_{\delta} + \whp_{\dsz}\)
= I'_1 + I'_2 + I'_3 - I'_4,\]
where
\[I'_1 = {\kappa_s} \int_{\mr^{N+1}_{+}} t^{1-2s} \(\nabla \(\chi_1 W_{\delta} + \Phi_{\dsz}\), \nabla\pa_k \(\chi_1W_{\delta} + \whp_{\dsz}\)\)_{\bg} \sqrt{|\bg|} dx dt,\]
\[I'_2 = \kappa_s \int_{\mr^{N+1}_{+}} E(t) \(\chi_1 W_{\delta} + \Phi_{\dsz}\) \pa_k \(\chi_1W_{\delta} + \whp_{\dsz}\) \sqrt{|\bg|} dx dt,\]
\[I'_3 = \int_{\mr^N} \tf(\sigma(x)) \(\chi_1 w_{\delta} + \Phi_{\dsz}\) \pa_k \(\chi_1w_{\delta} + \whp_{\dsz}\) \sqrt{|\hh|} dx\]
and
\[I'_4 = \int_{\mr^N} g_{\ep}\(\chi_1 w_{\delta} + \Phi_{\dsz}\) \pa_k \(\chi_1w_{\delta} + \whp_{\dsz}\) \sqrt{|\hh|} dx.\]
We will calculate each term to conclude the proof of Proposition \ref{prop_energy_1}.

\medskip \noindent \textsc{1. Estimate of $I'_1$.}
In this step, we only consider problem \eqref{eq_main_21} in order to ensure the finiteness of the value $d_s$ defined in the beginning of Subsection \ref{subsec_energy}.
To handle the other case \eqref{eq_main_12} is an easier task.

Direct computation shows that
\[\int_{\mr_+^{N+1}} t^{1-2s} \(\nabla \((1-\chi_2)\Phi_{\dsz}\), \nabla \pa_k \(\chi_1W_{\delta} + \whp_{\dsz}\)\)_{\bg} \sqrt{|\bg|} dx dt = O\(\ep^{2\gamma}\).\]
Thus we have $I_1' = I_{11}' + I_{12}' + O(\ep^{2\gamma})$ where
\begin{align*}
I_{11}' &= {\kappa_s} \int_{\mr^{N+1}_{+}} t^{1-2s} \bg^{ij} \pa_i\(\chi_1 W_{\delta} + \whp_{\dsz}\) \pa_j\pa_k \(\chi_1W_{\delta} + \whp_{\dsz}\) \sqrt{|\bg|} dx dt
\intertext{and}
I_{12}' &= {\kappa_s} \int_{\mr^{N+1}_{+}} t^{1-2s} \pa_t\(\chi_1 W_{\delta} + \whp_{\dsz}\) \pa_t\pa_k \(\chi_1W_{\delta} + \whp_{\dsz}\) \sqrt{|\bg|} dx dt.
\end{align*}
We shall compute the term $I_{11}'$ first. By \eqref{eq_decay_1}, \eqref{eq_decay_2} and \eqref{eq_phi_norm}, we discover
\begin{equation}\label{eq-m1-1}
\begin{aligned}
I_{11}' &= - {\kappa_s \over 2} \int_{\mr^{N+1}_{+}} t^{1-2s} \pa_k\(\bg^{ij} \sqrt{|\bg|}\) \pa_i\(\chi_1 W_{\delta} + \whp_{\dsz}\) \pa_j \(\chi_1W_{\delta} + \whp_{\dsz}\) dxdt \\
&= - {\kappa_s \over 2} \int_{\mr^{N+1}_{+}} t^{1-2s} \pa_k\(\bg^{ij} \sqrt{|\bg|}\) \pa_i\(\chi_1 W_{\delta}\) \pa_j \(\chi_1W_{\delta}\) dxdt \\
&\quad + O \(\int_{\mr^{N+1}_{+}} t^{1-2s} \left|\nabla (\chi_1 W_{\delta})\right| \left|\nabla \whp_{\dsz}\right| |(x,t)| dxdt + \int_{\mr^{N+1}_{+}} t^{1-2s} \left|\nabla \whp_{\dsz}\right|^2 dx dt\)\\
&= - {\kappa_s \over 2} \int_{B_{r_0}^+} t^{1-2s} \pa_k\(\bg^{ij} \sqrt{|\bg|}\) \pa_i W_{\delta} \pa_j W_{\delta} dxdt + O\(\ep^{2\gamma}\) + \begin{cases}
o\(\ep^{2s\alpha}\) &\text{for } N > 4s,\\
o\(\ep^{\alpha}\) &\text{for } N > 2s+1.
\end{cases} \end{aligned} \end{equation}
Also Lemma \ref{lemma_metric_exp} implies that
\[\pa_k \sqrt{|\bg|} = - H_k t - \frac{1}{6} \(R_{kl} + R_{lk}\) x_l + O\(|(x,t)|^2\)\]
and
\[\pa_k h^{ij} = -\frac{1}{3} \(R^{i\phantom{n} \phantom{n}j}_{\phantom{i}kl\phantom{j}} + R^{i\phantom{n} \phantom{n}j}_{\phantom{i}lk\phantom{j}}\) x_l + h^{ij}_{, (N+1)k} t + O\(|(x,t)|^2\),\]
from which we obtain
\begin{multline*}
\pa_k\(\bg^{ij} \sqrt{|\bg|}\) = \pa_k \bg^{ij} \sqrt{|\bg|} + \bg^{ij} \pa_k \sqrt{|\bg|}\\
= - \left[ \frac{1}{3} \(R^{i\phantom{n} \phantom{n}j}_{\phantom{i}kl\phantom{j}} + R^{i\phantom{n} \phantom{n}j}_{\phantom{i}lk\phantom{j}}\) + \frac{1}{6} \delta^{ij} \(R_{kl} + R_{lk}\) \right] x_l
+ \(h^{ij}_{, (N+1)k} - \delta^{ij} H_k\) t + O\(|(x,t)|^2\).
\end{multline*}
Inserting this into \eqref{eq-m1-1} and then applying \eqref{eq_decay_2} as well as the relations $h^{ii}_{\phantom{ii},(N+1)k} = \pi^{ii}_{\phantom{ii},k} = 2H_k$ and
\[\int_{B_{r_0}^+} t^{1-2s} x_l \pa_i W_{\delta} \pa_j W_{\delta} dxdt = 0 \quad \text{(by the odd symmetry of $W_{\delta}$ in the $x_1, \cdots, x_N$ variables)},\]
we get
\begin{equation}\label{eq_I_11'}
\begin{aligned}
I_{11}' &= {\kappa_s \over 2} \(\delta^{ij} H_k - h^{ij}_{\phantom{ij}, (N+1)k} \) {\delta_{ij} \over N} \int_{\mr^{N+1}_{+}} t^{2-2s} |\nabla_x W_{\delta}|^2 dxdt \\
&\quad + {\kappa_s \over 2} \left[ \frac{1}{3} \(R^{i\phantom{n} \phantom{n}j}_{\phantom{i}kl\phantom{j}} + R^{i\phantom{n} \phantom{n}j}_{\phantom{i}lk\phantom{j}}\) + \frac{1}{6} \delta^{ij} \(R_{kl} + R_{lk}\) \right] \int_{B_{r_0}^+} t^{1-2s} x_l \pa_i W_{\delta} \pa_j W_{\delta} dxdt \\
&\quad + O\(\int_{B_{r_0}^+} t^{1-2s}|(x,t)|^2 |\nabla W_{\delta}|^2 dxdt\) + O\(\ep^{2\gamma}\) + \begin{cases}
o\(\ep^{2s\alpha}\) &\text{for } N > 4s,\\
o\(\ep^{\alpha}\) &\text{for } N > 2s+1,
\end{cases}\\
&= {\kappa_s \over 2} \({N-2 \over N}\) H_k \delta \int_{\mr^{N+1}_{+}} t^{2-2s} |\nabla_x W_1|^2 dxdt + O\(\ep^{2\gamma}\) + \begin{cases}
o\(\ep^{2s\alpha}\) &\text{for } N > 4s,\\
o\(\ep^{\alpha}\) &\text{for } N > 2s+1.
\end{cases} \end{aligned} \end{equation}
Next the term $I_{12}'$ is to be considered. In fact, one can observe that
\begin{equation}\label{eq_I_12'}
\begin{aligned}
I_{12}' &= {\kappa_s \over 2} \int_{B_{4r_0}^+} t^{1-2s} \pa_k\(\pa_t\(\chi_1 W_{\delta} + \whp_{\dsz}\)\)^2 \sqrt{|\bg|} dx dt \\
&= - {\kappa_s \over 2} \int_{B_{4r_0}^+} t^{1-2s} \(\pa_t\(\chi_1 W_{\delta} + \whp_{\dsz}\)\)^2 \pa_k \sqrt{|\bg|} dx dt \\
&= {\kappa_s \over 2} \int_{B_{r_0}^+} t^{1-2s} \(\pa_t W_{\delta}\)^2 \left[ H_k t + \frac{1}{6} \(R_{kl} + R_{lk}\) x_l + O\(|(x,t)|^2\)\right] dx dt \\
& \hspace{175pt} + O\(\ep^{2\gamma}\) + \begin{cases}
o\(\ep^{2s\alpha}\) &\text{for } N > 4s,\\
o\(\ep^{\alpha}\) &\text{for } N > 2s+1,
\end{cases}\\
&= {\kappa_s \over 2} H_k \int_{\mr^{N+1}_+} t^{2-2s} \(\pa_t W_{\delta}\)^2 dx dt + O\(\ep^{2\gamma}\) + \begin{cases}
o\(\ep^{2s\alpha}\) &\text{for } N > 4s,\\
o\(\ep^{\alpha}\) &\text{for } N > 2s+1.
\end{cases} \end{aligned} \end{equation}
Consequently, \eqref{eq_I_11'}, \eqref{eq_I_12'} and \eqref{eq_appen_11} give us that
\begin{equation}\label{eq_I_1'}
\begin{aligned}
I_1' &= {\kappa_s \over 2} H_k \lambda \ep^{\alpha} \int_{\mr^{N+1}_{+}} t^{2-2s} \left[\({N-2 \over N}\) |\nabla_x W_1|^2 + \(\pa_t W_1\)^2 \right] dxdt + o\(\ep^{\alpha}\) \\
&= \kappa_s H_k \lambda \ep^{\alpha} \({2N-2s-1 \over 4N}\) \int_{\mr^{N+1}_{+}} t^{2-2s} |\nabla W_1|^2 dxdt + o\(\ep^{\alpha}\).
\end{aligned} \end{equation}

\medskip \noindent
\textsc{2. Estimate of $I'_2$}.
Performing the integration by parts, we have
\[I'_2 = - {\kappa_s \over 2} \int_{\mr^{N+1}_{+}} \pa_k \(E(t)\sqrt{|\bg|}\) \(\chi_1 W_{\delta} + \whp_{\dsz}\)^2 dx dt + O\(\ep^{2\gamma}\).\]
If $s \in (1/2, 1)$ and $H = 0$, then
\begin{equation}\label{eq_I_2'}
\begin{aligned}
\left|I'_2\right| &= O\(\int_{\mr^{N+1}_{+}} t^{1-2s} | W_{\delta}|^2 dx dt\) + O\(\int_{\mr^{N+1}_{+}} t^{1-2s} \left|\whp_{\dsz}\right|^2 dx dt\) \\
&= O\(\ep^{2\alpha}\) + O\(\ep^{2\gamma}\) = \begin{cases}
o(\ep) &\text{for } \eqref{eq_main_12},\\
o\(\ep^{\alpha}\) &\text{for } \eqref{eq_main_21}.
\end{cases} \end{aligned} \end{equation}
If $s \in (0,1/2)$, one finds from \eqref{eq_E_local_2} that $|I_2'| = O(\delta) + O(\ep^{2\gamma}) = o(\ep)$ for equations \eqref{eq_main_12}.
Furthermore, if $N > 2s + 1$ is imposed, we can compute that
\begin{equation}\label{eq_I_2'_2}
\begin{aligned}
I'_2 &= -{\kappa_s \over 2} \({N-2s \over 2}\) \int_{\mr^{N+1}_+} \pa_k H(\sigma(x)) t^{-2s} W_{\delta}^2 dxdt + O\(\int_{\mr^{N+1}_+} t^{-2s} \(W_{\delta}^2 + \whp_{\dsz}^2\)|(x,t)|dxdt\)\\
&\quad + O\(\int_{\mr^{N+1}_+} t^{-2s} \(\left|\chi_1^2-1\right|W_{\delta}^2 + \chi_1W_{\delta} \left|\whp_{\dsz}\right| + \wtp_{\dsz}^2\)dxdt\)\\
&= - \kappa_s \({N-2s \over 4}\) \pa_k \left.\( H(\sigma(x))\)\right|_{x=0} \delta \int_{\mr^{N+1}_+} t^{-2s} W_1^2 dxdt + o(\delta)
\end{aligned}
\end{equation}
for equation \eqref{eq_main_21}, by utilizing \eqref{eq_error_est_32}.

\medskip \noindent
\textsc{3. Estimate of $I'_3$}.
We have
\begin{align*}
I'_3 &= -{1 \over 2} \int_{\mr^N} \pa_k\(\tf(\sigma(x))\sqrt{|\hh|}\) \(\chi_1w_{\delta} + \whp_{\dsz}\)^2 dx + O\(\ep^{2\gamma}\)\\
&= -{1 \over 2} \int_{\mr^N} \pa_k\(\tf(\sigma(x))\) w_{\delta}^2 dx + \int_{\mr^N} O(|x|) \left|\chi_1w_{\delta} + \whp_{\dsz}\right|^2 dx \\
&\quad -{1 \over 2} \int_{\mr^N} \pa_k\(\tf(\sigma(x))\) \((\chi_1^2-1)w_{\delta}^2 + 2\chi_1 w_{\delta} \whp_{\dsz} + \whp_{\dsz}^2\) \sqrt{|\hh|} dx + O\(\ep^{2\gamma}\) \\
&= -{1 \over 2} \int_{\mr^N} \pa_k\(\tf(\sigma(x))\) w_{\delta}^2 dx + O\(\int_{B^N(0,2r_0)} |W_{\delta}| \left|\whp_{\dsz}\right| dx\) + o\(\ep^{2s\alpha}\) + O\(\ep^{2\gamma}\).
\end{align*}
Applying H\"older's inequality, we estimate
\[\int_{B^N(0,2r_0)} w_{\delta} \left|\whp_{\dsz}\right| dx = O\(\|w_{\delta}\|_{L^2(\mr^N)}\|\Phi_{\dsz}\|_{L^2(M)}\) = O\(\ep^{s\alpha + \gamma}\).\]
Hence it follows that
\begin{equation}\label{eq_I_3'}
I'_3 = \begin{cases}
-\dfrac{1}{2} \lambda^{2s} \ep \left. \pa_k \(\tf(\sigma(x))\)\right|_{x=0} \int_{\mr^N} w_1^2 dx + o(\ep) &\text{for } \eqref{eq_main_12},\\
0 &\text{for } \eqref{eq_main_21}.
\end{cases} \end{equation}

\medskip \noindent
\textsc{4. Estimate of $I'_4$}. We will deal with the cases \eqref{eq_main_12} only.
The remaining case \eqref{eq_main_21} is similar, and especially, the small linear term $\ep fu$ of $g_{\ep}(u)$ for this problem (see \eqref{eq_gep}) can be taken into consideration as in the previous step. One has
\begin{align*}
I'_4 &= \int_{\mr^N} \pa_k G_{\ep}\(\chi_1 W_{\delta} + \whp_{\dsz}\) \sqrt{|\hh|} dx + O\(\ep^{2\gamma}\) = - \int_{\mr^N} G_{\ep}\(\chi_1 W_{\delta} + \whp_{\dsz}\) \pa_k \sqrt{|\hh|} dx + O\(\ep^{2\gamma}\)\\
&= - \int_{\mr^N} G_{\ep}\(\chi_1 W_{\delta}\) \pa_k \sqrt{|\hh|} dx + \int_{\mr^N} \left[ G_{\ep}\(\chi_1 W_{\delta}\) - G_{\ep}\(\chi_1 W_{\delta} + \whp_{\dsz}\) \right] \pa_k \sqrt{|\hh|} dx + o(\ep).
\end{align*}
With the observation that $\pa_k \sqrt{|\hh|} = - {1 \over 6} \(R_{kl} + R_{lk}\) x_l + O(|x|^2)$, we estimate the second term as
\begin{align*}
&\int_{\mr^N} \left| G_{\ep}\(\chi_1 W_{\delta}\) - G_{\ep}\(\chi_1 W_{\delta} + \whp_{\dsz}\) \right| O(|x|) dx
\\
&\le \int_{\mr^N} \( (\chi_1W_{\delta})^{p \pm \ep} \left|\whp_{\dsz}\right| + \left|\whp_{\dsz}\right|^{p+1 \pm \ep}\) O(|x|) dx
\\
&\le C \( \int_{B^N(0,2r_0)} W_{\delta}^{p+1 \pm \(\frac{2N}{N+2s}\)\ep} |x|^{\frac{2N}{N+2s}} dx\)^{\frac{N+2s}{2N}}  \(\int_{\mr^N} \left|\whp_{\dsz}\right|^{p+1} dx\)^{1 \over p+1}
+ C \(\int_{\mr^N} \left|\whp_{\dsz}\right|^{p+1 \pm \ep} dx\)
\\
&= O\(\ep^{\alpha + \gamma}\) 
+ O\(\ep^{(p+1)\gamma}\) = o(\ep).
\end{align*}
In addition, we find
\[\int_{\mr^N} G_{\ep}\(\chi_1 W_{\delta}\) \pa_k \sqrt{|\hh|} dx = O\( \int_{\mr^N}  W_{\delta}^{p+1 \pm \ep} |x|^2 dx \) = O\(\ep^{2\gamma}|\log\ep|\) = o(\ep)\]
given that $N \ge 2$.

Collecting \eqref{eq_I_1'}-\eqref{eq_I_3'} and the above estimates completes the proof of the $C^1$-estimates for $J_{\ep}$.
\end{proof}

\section{Conclusion of the proof of Theorems \ref{thm_main_1} and \ref{thm_main_2} and some remarks}
In this section, we complete the proof of our main results.
\begin{proof}[Proof of Theorem \ref{thm_main_1}]
Suppose that $\sigma_0 \in M$ is a $C^1$-stable critical point of $f$ such that $f(\sigma_0) > 0$.
If we let
\[\widetilde{J}(\ls) = {d_1 \over 2}f(\sigma)\lambda^{2s} - {(N-2s)^2d_0 \over 4N} \log\lambda \quad \text{for } (\ls) \in (0,\infty) \times M\]
and
\[\lambda_0 = \({(N-2s)^2d_0 \over 4Nf(\sigma_0)d_1}\)^{1 \over 2s} > 0,\]
then it follows from the invariance of the Brouwer degree under a homotopy that $(\lambda_0, \sigma_0)$ is a $C^1$-stable critical point of $\widetilde{J}$
(refer to the proof of Theorem 1.1 in \cite{MPV}).
Therefore, by Propositions \ref{prop_energy} and \ref{prop_energy_1},
there exists a critical point $(\lambda_{\ep}, \sigma_{\ep}) \in (0,\infty) \times M$ of $J_{\ep}$ in \eqref{eq_energy_est_res_1} for sufficiently small $\ep > 0$
such that $(\lambda_{\ep}, \sigma_{\ep}) \to (\lambda_0, \sigma_0)$ as $\ep \to 0$.
This fact and Proposition \ref{prop_red} imply that $(15_-)$ attains a positive solution.
As a consequence, we see from Proposition \ref{prop_ext} that its trace on $M$ solves $(1_-)$, deducing the conclusion.

If there is a $C^1$-stable critical point $\sigma_0 \in M$ of $f$ such that $f(\sigma_0) < 0$, then the same argument provides solutions of equations $(15_+)$, and so those of $(1_+)$.
This concludes the proof of Theorem $\ref{thm_main_1}$.
\end{proof}

\begin{proof}[Proof of Theorem \ref{thm_main_2}]
Under our assumptions existence of solutions to \eqref{eq_main_2} follows from Propositions \ref{prop_energy}, \ref{prop_energy_1}, \ref{prop_red} and \ref{prop_ext}.
Observe that $\lambda(\sigma)$ is the unique value such that ${\pa \widetilde{J} \over \pa \lambda}(\lambda(\sigma), \sigma) = 0$ for each $\sigma \in M$ fixed,
and $\nabla \widetilde{J}(\lambda(\sigma), \sigma) = 0$ if and only if $\sigma$ is a critical point of the function
\[\widehat{J}(\sigma) := \widetilde{J}(\lambda(\sigma), \sigma)
= \pm (d_1s)^{1 \over 1-2s} \(4N(1-2s) \over \(2N(N-1)+\(1-4s^2\)\)d_s\)^{2s \over 1-2s} \({1-2s \over 2s}\) \({|f(\sigma)| \over |H(\sigma)|^{2s}}\)^{1 \over 1-2s}.\]
Hence $\sigma_0$ should be a critical point of $|f|/|H|^{2s}$.
The proof is finished.
\end{proof}

We conclude this section, raising some additional questions regarding our main result.

First of all, one may ask the compactness issue for equations \eqref{eq_main_1} with $f = 0$.
For the local case ($s = 1$), if the dimension $N$ of a manifold $M$ satisfies $N \le 24$, the positive mass theorem holds for $M$ and the nonlinearity is slightly subcritical or critical,
then the solution set for \eqref{eq_main_1} is pre-compact as shown by Khuri, Marques and Schoen \cite{KMS}.
On the other hand, if $N \ge 7$ and the nonlinearity is slightly supercritical,
then Esposito and Pistoia \cite{EP} proved that there is a family of solutions to \eqref{eq_main_1} which blow-up at a maximum point of the function $x \to \|\text{Weyl}_{\hh}(x)\|_{\hh}$ defined for $x \in (M, \hh)$.
We think that a similar phenomenon may happen for the nonlocal case too, but do not have any definitive answer yet.

Secondly, the behavior of equation \eqref{eq_main_2} in the case $H = 0$ has to be understood.
Notice that the main order in the energy expansion \eqref{eq_energy_est_res_3}, computed with the assumption $H \ne 0$, is $\ep^{1 \over 1-2s}$ whose exponent is well-defined (namely, positive) only if $s \in (0, 1/2)$.
It would be interesting to figure out how this is related to the fact that the characterization of $P^s_{\hh}$ in terms of extension problems
is valid for any $H$ only if $s \in (0, 1/2]$, while the case $s = 1/2$ is quite special in that it arises from the purely local problem - the boundary Yamabe problem.
On the other hand, if $H = 0$, the correct choice of $\alpha$ in \eqref{eq_alpha}
and the main order of the energy expansion would be ${1 \over 2(1-s)}$ and $\ep^{1 \over 2(1-s)}$, respectively, hence it makes sense for any $s \in (0,1)$.
However, controlling this case is technically harder, since one needs to improve the accuracy of approximate solutions.
Such an additional difficulty also arose in the local cases ($s = 1, 2$) in \cite{EPV} and \cite{PV}.

In both problems, we suspect that the governing function for the blow-up location has a relationship with the norm of the second fundamental form $\|\pi\|_{\hh}$ or that of the Weyl tensor $\|\text{Weyl}_{\hh}\|_{\hh}$.
In \cite{GW, KMW}, one can observe how the Weyl tensor carries out its role in the fractional Yamabe problem.

Currently a theory for the higher order fractional Paneitz operator ($\gamma \in (1,2))$ is being developed (see e.g. \cite{CC}).
It seems natural to formulate analogous problems for these operators.
We also believe that equation $(1_{\pm})$ should have bubble-tower type solutions as in \cite{PVe}.

\bigskip \noindent \textbf{Acknowledgement.}
The authors would like to express their sincere gratitude to Professor A. Pistoia for her valuable comments.
W. Choi was supported by the Global Ph.D Fellowship of the Government of
South Korea 300-20130026.
Also, S. Kim has been supported by FONDECYT Grant 3140530, Chile.

\appendix
\section{Proof of Lemma \ref{lemma_decay}}\label{sec_appen}
In this appendix, we justify Lemma \ref{lemma_decay} which describes the decay of the bubble $W_{\delta}$.
The proof will be achieved once we combine Lemmas \ref{lem-appen-1} and \ref{lem-pw-decay}.

\begin{lemma}\label{lem-appen-1}
Let $0<s<1$ and $a \in \mr$. Also fix $0 < R_1 < R_2$ and denote $A^+_{\delta^{-1}} = B^+_{R_2\delta^{-1}} \setminus B^+_{R_1\delta^{-1}}$.
Then, as $\delta \to 0$, we have the estimates
\[\int_{A^+_{\delta^{-1}}} \frac{t^{1-2s}}{|(x,t)|^{N+2-2s+a}} dx dt =
\begin{cases}
O\(\delta^a\) &\text{for } a \ne 0,\\
O\(|\log \delta|\) &\text{for } a = 0,
\end{cases}\]
and
\[\int_{A^+_{\delta^{-1}}} \frac{t^{2s-1}}{|(x,t)|^{N+2s+a}} dx dt =
\begin{cases}
O\(\delta^a\) &\text{for } a \ne 0,\\
O\(|\log \delta|\) &\text{for } a = 0.
\end{cases}\]
\end{lemma}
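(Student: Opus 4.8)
The statement is purely a computation of two weighted integrals over a half-annulus, so the plan is to reduce everything to polar coordinates in $\mr^{N+1}_+$ and then to a single radial integral. Write $(x,t)\in\mr^{N+1}_+$ in spherical coordinates as $x=r\omega'\sin\theta$, $t=r\cos\theta$ with $r>0$, $\theta\in(0,\pi/2)$ and $\omega'\in S^{N-1}$, so that $|(x,t)|=r$, $t^{1-2s}=r^{1-2s}(\cos\theta)^{1-2s}$ and the volume element is $dx\,dt = r^N (\sin\theta)^{N-1}\,dr\,d\theta\,d\omega'$. The angular integral $\int_{S^{N-1}}\int_0^{\pi/2}(\cos\theta)^{1-2s}(\sin\theta)^{N-1}\,d\theta\,d\omega'$ is a finite positive constant depending only on $N$ and $s$ (it is a Beta-function value, finite because $1-2s>-1$ and $N-1>-1$), and similarly for the second integral the angular factor is $\int_{S^{N-1}}\int_0^{\pi/2}(\cos\theta)^{2s-1}(\sin\theta)^{N-1}\,d\theta\,d\omega'$, which is finite precisely because $2s-1>-1$. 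Pulling these constants out, the first integral becomes a constant times $\int_{R_1\delta^{-1}}^{R_2\delta^{-1}} r^{1-2s}\cdot r^{-(N+2-2s+a)}\cdot r^N\,dr = \int_{R_1\delta^{-1}}^{R_2\delta^{-1}} r^{-1-a}\,dr$, and the second becomes a constant times $\int_{R_1\delta^{-1}}^{R_2\delta^{-1}} r^{-1-a}\,dr$ as well, since $2s-1-(N+2s+a)+N=-1-a$.

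So both estimates collapse to the elementary one-dimensional fact that $\int_{R_1\delta^{-1}}^{R_2\delta^{-1}} r^{-1-a}\,dr$ equals $\frac{1}{a}\big((R_1\delta^{-1})^{-a}-(R_2\delta^{-1})^{-a}\big) = O(\delta^a)$ when $a\ne 0$, and equals $\log(R_2/R_1) = O(1) = O(|\log\delta|)$ — trivially — when $a=0$. (One should perhaps phrase the $a=0$ case a bit more carefully: the integral is in fact bounded, hence certainly $O(|\log\delta|)$; the reason the authors write $|\log\delta|$ rather than $O(1)$ is presumably for uniformity with other places in the paper where a genuine logarithm appears. I would simply record that it is $O(1)$, a fortiori $O(|\log\delta|)$.) The key steps in order are: (1) switch to spherical coordinates adapted to the half-space; (2) separate the radial and angular integrations and check finiteness of the angular constants using $s\in(0,1)$; (3) evaluate the radial integral and read off the power of $\delta$.

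There is essentially no obstacle here — the only point requiring the smallest care is the finiteness of the angular integral in the second estimate, where the weight $t^{2s-1}$ is singular at $t=0$: one needs $2s-1>-1$, i.e. $s>0$, which holds. For the first estimate the weight $t^{1-2s}$ is singular at $t=0$ only when $s>1/2$, and again $1-2s>-1$ guarantees integrability. No cancellation or delicate estimate is involved, so this is a routine lemma whose proof is just the change of variables described above followed by the elementary radial integral.
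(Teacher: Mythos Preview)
Your proof is correct. The approach, however, differs from the paper's. The paper first observes that the second estimate follows from the first by the substitution $s \mapsto 1-s$, and then proves the first estimate by splitting the half-annulus into the two cones $\{|t|\ge |x|\}$ and $\{|t|\le |x|\}$: on the first cone one replaces $t^{1-2s}$ by a comparable power of $|(x,t)|$ and integrates a pure radial power; on the second cone one integrates $t^{1-2s}$ explicitly in $t$ from $0$ to $|x|$ and is left with an integral of $|x|^{-N-a}$ over an annulus in $\mr^N$. Your route via spherical coordinates in $\mr^{N+1}_+$ is cleaner: the angular and radial parts separate exactly, the angular integral is a finite Beta value (precisely because $s\in(0,1)$ makes both exponents $1-2s$ and $2s-1$ exceed $-1$), and everything reduces to the single radial integral $\int_{R_1\delta^{-1}}^{R_2\delta^{-1}} r^{-1-a}\,dr$. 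The paper's decomposition buys nothing extra here; your change of variables is the more direct computation. Your remark that the $a=0$ case actually gives $O(1)$, hence trivially $O(|\log\delta|)$, is also correct --- the paper's stated bound is simply not sharp in that case.
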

\begin{proof}
The second inequality follows from the first inequality by substituting $s$ with $1-s$.
To prove the first inequality, we decompose the domain of integration
\[A^+_{\delta^{-1}} = \(A^+_{\delta^{-1}} \cup \{|t| \ge |x|\}\) \cup \(A^+_{\delta^{-1}} \cup \{|t| \le |x|\}\)\]
and estimate each part separately.
If $|t|\ge |x|$, then it holds that $|t| \le |(x,t)| \le \sqrt{2}|t|$. Hence we get
\begin{align*}
\int_{A^+_{\delta^{-1}} \cup \{|t| \ge |x|\}} \frac{t^{1-2s}}{|(x,t)|^{N+2-2s+a}} dx dt
&\le \max\left\{1, \sqrt{2}^{2s-1}\right\} \int_{A^+_{\delta^{-1}} \cup \{|t| \ge |x|\}} \frac{1}{|(x,t)|^{N+1+a}} dx dt
\\
&\le C \int_{A^+_{\delta^{-1}}} \frac{1}{|(x,t)|^{N+1+a}}dxdt =
\begin{cases}
O\(\delta^a\) &\text{for } a \ne 0,\\
O\(|\log \delta|\) &\text{for } a = 0.
\end{cases}
\end{align*}
If $|t|\le |x|$, then we have that ${\delta^{-1} \over \sqrt{2}} \le {1 \over \sqrt{2}}|(x,t)| \le |x| \le |(x,t)| \le 2\delta^{-1}$ for $(x,t) \in A^+_{\delta^{-1}}$. Consequently,
\begin{align*}
\int_{A^+_{\delta^{-1}} \cup \{|t| \le |x|\}} \frac{t^{1-2s}}{|(x,t)|^{N+2-2s+a}} dx dt
&\le \int_{\left\{\frac{\delta^{-1}}{\sqrt{2}} \le |x| \le 2\delta^{-1}\right\}} \int_{\{|t| \le |x|\}} \frac{t^{1-2s}}{|x|^{N+2-2s+a}} dt dx
\\
&= \frac{1}{1-s}\int_{\left\{\frac{\delta^{-1}}{\sqrt{2}} \le |x| \le 2\delta^{-1}\right\}} \frac{|x|^{2-2s}}{|x|^{N+2-2s+a}} dx
\\
& = \frac{1}{1-s}\int_{\left\{\frac{\delta^{-1}}{\sqrt{2}} \le |x| \le 2\delta^{-1}\right\}} \frac{1}{|x|^{N+a}} dx =
\begin{cases}
O\(\delta^a\) &\text{for } a \ne 0,\\
O\(|\log \delta|\) &\text{for } a = 0.
\end{cases}
\end{align*}
Combination of the above two estimates yields the desired inequality, concluding the proof.
\end{proof}

\begin{lemma}\label{lem-pw-decay}
Assume that $|(x,t)| \ge R_0$ for some fixed $R_0 > 0$ large.
Then we have the validity of
\begin{enumerate}
\item[(i)] $W_1(x,t) \le \frac{C}{|(x,t)|^{N-2s}}$\quad  and \quad $|\nabla W_1(x,t)| \le \frac{C}{|(x,t)|^{N-2s+1}}+ \frac{Ct^{2s-1}}{|(x,t)|^{N+2s}}.$
\item[(ii)] $|\pa_i W_1(x,t)| \le \frac{C}{|(x,t)|^{N-2s+1}}$ \quad and $\quad |\nabla \pa_i W_1(x,t)| \le \frac{C}{|(x,t)|^{N-2s+2}} + \frac{C t^{2s-1}}{|(x,t)|^{N+2s+1}}$ for $i=1,2\cdots, N$.
\item[(iii)] $|\pa_{\delta} W_1(x,t)|\le \frac{C}{|(x,t)|^{N-2s}}$ \quad and \quad $|\nabla \pa_{\delta} W_1(x,t)| \le \frac{C}{|(x,t)|^{N-2s+1}} + \frac{Ct^{2s-1}}{|(x,t)|^{N+2s}}$
\end{enumerate}
for some $C > 0$ determined by $N$, $s$ and $R_0$.
\end{lemma}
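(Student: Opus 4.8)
The plan is to obtain all the pointwise bounds from the explicit representation of the $s$-harmonic extension $W_1 = \text{Ext}^s(w_1)$ via the Poisson kernel, together with the exact formula for the bubble $w_1$. Recall from \cite{CS} that
\[
W_1(x,t) = p_{N,s}\, t^{2s} \int_{\mr^N} \frac{w_1(y)}{\(|x-y|^2 + t^2\)^{N+2s \over 2}}\, dy,
\]
where $p_{N,s} > 0$ is a normalizing constant, and $w_1(y) = \tilde\kappa_{N,s}\(1+|y|^2\)^{-{N-2s \over 2}}$ by \eqref{eq_bubble}. First I would establish (i) for $W_1$ itself. For the upper bound, one splits the integral into the region $\{|y| \le |(x,t)|/2\}$, where $w_1(y) \le C$ and $|x - y|^2 + t^2 \ge c|(x,t)|^2$, and the complementary region, where $w_1(y) \le C|y|^{-(N-2s)} \le C|(x,t)|^{-(N-2s)}$ and the kernel integrates (after the change of variables $y \mapsto y - x$, using $N + 2s > N$) to a quantity controlled by $t^{2s}(\cdots)$ absorbed against the prefactor; both regions give $C|(x,t)|^{-(N-2s)}$. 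It is cleaner, though, to recall that $W_1$ solves \eqref{eq_limit}, so away from the origin it is a positive solution of a uniformly elliptic equation in divergence form (for $t$ bounded away from $0$ and $\infty$ one uses interior Harnack; near $\{t=0\}$ one uses the boundary Harnack / regularity theory for the $A_2$-weight $t^{1-2s}$, e.g. as in \cite{CS,CS2}), which combined with the known decay $w_1(x) \sim |x|^{-(N-2s)}$ on the boundary trace yields $W_1(x,t) \le C|(x,t)|^{-(N-2s)}$ for $|(x,t)| \ge R_0$.

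Next, for the gradient bounds in (i), I would differentiate the Poisson representation. Differentiating in $x_i$ moves the $t^{2s}$ prefactor through untouched and hits the kernel, producing an extra factor $|x-y|/\(|x-y|^2+t^2\)$; repeating the region split above then gives the gain of one power of $|(x,t)|^{-1}$, i.e. the $C|(x,t)|^{-(N-2s+1)}$ term. Differentiating in $t$ produces two contributions: one from $\partial_t(t^{2s}) = 2s\,t^{2s-1}$ times the undifferentiated kernel — after scaling this is comparable to $t^{-1}W_1 \le C|(x,t)|^{-1}\cdot|(x,t)|^{-(N-2s)}$ near the "$t \lesssim |x|$" regime but in the "$t \gtrsim |x|$" regime it only gives the second term $C t^{2s-1}|(x,t)|^{-(N+2s)}$ — and one from $t^{2s}$ times $\partial_t$ of the kernel, which again gains a power of $|(x,t)|^{-1}$. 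Collecting, $|\nabla W_1| \le C|(x,t)|^{-(N-2s+1)} + C t^{2s-1}|(x,t)|^{-(N+2s)}$, which is exactly (i); note that for $N > 2s$ both terms are integrable against $t^{1-2s}$ away from the origin, consistent with \eqref{eq_decay_1}.

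Parts (ii) and (iii) follow the same scheme applied to $\partial_i W_1 = \text{Ext}^s(\partial_i w_1)$ and $\partial_\delta W_1\big|_{\delta=1}$. For (ii), observe that $\partial_i w_1(y) = -(N-2s)\tilde\kappa_{N,s}\, y_i\(1+|y|^2\)^{-{N-2s+2 \over 2}}$ decays like $|y|^{-(N-2s+1)}$, so the argument giving (i) now produces $|(x,t)|^{-(N-2s+1)}$ for the function and $|(x,t)|^{-(N-2s+2)} + t^{2s-1}|(x,t)|^{-(N+2s+1)}$ for its gradient. For (iii), one computes $\partial_\delta w_\delta\big|_{\delta=1}$ from \eqref{eq_bubble}: it is $\tilde\kappa_{N,s}$ times a linear combination of $\(1+|y|^2\)^{-{N-2s \over 2}}$ and $|y|^2\(1+|y|^2\)^{-{N-2s+2 \over 2}}$, both of which decay like $|y|^{-(N-2s)}$ — no gain over $w_1$ — so $\partial_\delta W_1$ and its gradient obey exactly the same bounds as $W_1$ and $\nabla W_1$. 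An alternative, perhaps shorter, route for the gradient estimates once the $L^\infty$-type decay of the functions is in hand is to rescale: fixing a point $(x_0,t_0)$ with $|(x_0,t_0)| = R \ge R_0$, apply interior/boundary gradient estimates for the degenerate equation $\text{div}(t^{1-2s}\nabla\cdot) = 0$ on the ball $B^+((x_0,0), R/2)$ (which rescale covariantly), turning the sup-bound $C R^{-(N-2s)}$ on that ball into the gradient bound; the slight subtlety is that the natural gradient estimate for this weighted equation controls $|\nabla_x U| + |t^{1-2s}\partial_t U|$ rather than $|\partial_t U|$ directly, which is precisely why the second term in each displayed bound carries the weight $t^{2s-1}$.

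The main obstacle I anticipate is the boundary behavior at $\{t=0\}$: the kernel-splitting estimates are routine in the interior but near the boundary the weight $t^{1-2s}$ degenerates (or blows up) and one must invoke the correct regularity theory for $A_2$-weighted equations — boundary Harnack and $C^\alpha$/gradient estimates up to $\{t=0\}$ — to legitimately differentiate and to get the sharp form of the $t$-derivative bound with its $t^{2s-1}$ weight. Keeping careful track of which term dominates in the two regimes $t \lesssim |x|$ versus $t \gtrsim |x|$ (exactly as in the proof of Lemma \ref{lem-appen-1}) is what produces the two-term structure of the gradient estimates, and getting those exponents right is the only genuinely delicate point.
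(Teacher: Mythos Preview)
Your proposal takes a different starting representation from the paper. The paper does \emph{not} use the Poisson kernel; instead it uses the Riesz potential (Green's function) representation
\[
W_{\delta}(x,t) = \mathfrak{a}_{N,s}\int_{\mr^N}\frac{w_{\delta}(y)^{p}}{|(x-y,t)|^{N-2s}}\,dy,
\]
valid because $(-\Delta)^s w_{\delta}=w_{\delta}^{p}$ and $|(x,t)|^{-(N-2s)}$ is the fundamental solution of $\text{div}(t^{1-2s}\nabla\,\cdot\,)=0$ on $\mr^{N+1}_+$. This choice pays off exactly at the $\pa_t$ step: the source $w_1^p$ decays like $|y|^{-(N+2s)}$ (hence is integrable), and the kernel has no $t^{2s}$ prefactor, so $\pa_t$ produces a single clean term $-(N-2s)t|(x-y,t)|^{-(N-2s+2)}$. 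Splitting $\{|y-x|\lessgtr |x|/2\}$ in the regime $|x|\ge t$ then gives the two summands $|(x,t)|^{-(N-2s+1)}$ and $t^{2s-1}|(x,t)|^{-(N+2s)}$ directly, with no cancellation needed.

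Your primary (Poisson) route has a genuine gap at precisely this point. The contribution from $\pa_t(t^{2s})$ equals $(2s/t)W_1$, and your claim that in the ``$t\lesssim |x|$'' regime this is $\le C|(x,t)|^{-1}\cdot|(x,t)|^{-(N-2s)}$ is false: when $t$ is small $t^{-1}$ is not controlled by $|(x,t)|^{-1}$. In fact the two pieces of $\pa_t$ of the Poisson integrand cancel to leading order at $t=0$ (equivalently $\int_{\mr^N}\pa_t P(\cdot,t)=0$), and one must exploit this---e.g.\ by subtracting $w_1(x)$ under the integral---to recover the sharp bound. Your alternative via rescaled boundary gradient estimates for the $A_2$-weighted equation is a legitimate independent approach and correctly identifies that the natural controlled quantity is $|\nabla_x U|+|t^{1-2s}\pa_t U|$; however, the naive version yields only $|\pa_t W_1|\le Ct^{2s-1}|(x,t)|^{-N}$, which for $s<1/2$ is strictly weaker than the stated two-term bound. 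To match the paper's estimate by this route one has to separate the contribution of the Neumann data $w_1^p\sim|x|^{-(N+2s)}$ (which alone generates the $t^{2s-1}|(x,t)|^{-(N+2s)}$ piece) from the remainder with zero weighted normal derivative (whose $\pa_t$ is genuinely bounded, giving $|(x,t)|^{-(N-2s+1)}$). The paper's Riesz-potential computation avoids all of this extra work.
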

\begin{proof}
We initiate the proof with recalling Green's representation formula
\begin{equation}\label{eq_Green}
W_{\delta}(x,t) = \mathfrak{a}_{N,s} \int_{\mr^N} \frac{w_{\delta}(y)^{\frac{N+2s}{N-2s}}}{|(x-y,t)|^{N-2s}} dy
= \mathfrak{b}_{N,s} \int_{\mr^N} \(\frac{\delta}{\delta^2+|y|^2}\)^{N+2s \over 2} \frac{1}{|(x-y,t)|^{N-2s}} dy
\end{equation}
where $\mathfrak{a}_{N,s}$ and $\mathfrak{b}_{N,s}$ are positive constants depending only on $N$ and $s$ (see \cite[Subsection 2.3]{CKL}).
The proof consists of 3 steps.

\noindent \emph{Step 1: Estimates of $W_1$.} We split the situation into two cases.

\noindent Case 1. Assume that $|x|\le |t|$. Since $|(x,t)|\le \sqrt{2}|t|$, we obtain
\[W_1(x,t) \le \mathfrak{b}_{N,s} \int_{\mr^N} \frac{1}{(1+|y|^2)^{N+2s \over 2}} \frac{1}{|t|^{N-2s}} dy= \frac{C}{|t|^{N-2s}} \le \frac{C}{|(x,t)|^{N-2s}}.\]
Case 2. Assume next that $|x|\ge |t|$. Then we observe from $|(x,t)|\le \sqrt{2}|x|$ that
\[W_1(x,t) \le \mathfrak{b}_{N,s} \int_{\mr^N} \frac{1}{(1+|y|^2)^{N+2s \over 2}} \frac{1}{|x-y|^{N-2s}} dy = w_1(x) \le \frac{C}{|x|^{N-2s}} \le \frac{C}{|(x,t)|^{N-2s}}.\]
Putting these two estimates together, we get the first inequality of (i).

\medskip
\noindent \emph{Step 2: Estimates of $|\nabla W_1|$}. Again we deal with the two mutually exclusive cases.

\noindent Case 1. Suppose $|x|\le |t|$. Then, from we have $|(x,t)| \le \sqrt{2}|t|$, we see that
\begin{equation}\label{eq_appen_6}
\begin{aligned}
|\nabla_{(x,t)} W_1(x,t)| &\le \mathfrak{b}_{N,s} \int_{\mr^N} \frac{1}{(1+|y|^2)^{N+2s \over 2}} \left|\nabla_{(x,t)} \frac{1}{|(x-y,t)|^{N-2s}}\right| dy
\\
& \le C \int_{\mr^N} \frac{1}{(1+|y|^2)^{N+2s \over 2}} \frac{1}{|(x-y,t)|^{N-2s+1}} dy
\\
&\le C \int_{\mr^N} \frac{1}{(1+|y|^2)^{N+2s \over 2}} \frac{1}{|t|^{N-2s+1}} dy
= \frac{C}{|t|^{N-2s+1}} \le \frac{C}{|(x,t)|^{N-2s+1}}.
\end{aligned} \end{equation}
Case 2. Assume that $|x| \ge |t|$ so that we get $|(x,t)|\le \sqrt{2}|x|$.
By integration by parts, we deduce
\begin{align*}
\nabla_{x} W_1(x,t) &= - \mathfrak{b}_{N,s} \int_{\mr^N} \frac{1}{(1+|y|^2)^{N+2s \over 2}} \nabla_y \( \frac{1}{|(x-y,t)|^{N-2s}}\) dy
\\
&= -\int_{|y-x|\ge \frac{|x|}{2}} \frac{1}{(1+|y|^2)^{N+2s \over 2}} \nabla_y \( \frac{1}{|(x-y,t)|^{N-2s}}\) dy
\\
&\quad + \int_{|y-x|\le \frac{|x|}{2}} \nabla_y\( \frac{1}{(1+|y|^2)^{N+2s \over 2}}\) \frac{dy}{|(x-y,t)|^{N-2s}}
- \int_{|y-x|= \frac{|x|}{2}} \frac{1}{(1+|y|^2)^{N+2s \over 2}} \frac{\nu_y dS_y}{|(x-y,t)|^{N-2s}}
\end{align*}
where $\nu_y$ and $dS_y$ is the outward unit normal vector and the surface measure on the sphere $|y-x|= \frac{|x|}{2}$, respectively.
Hence, realizing that $|y| \ge \frac{|x|}{2}$ if $|y-x| \le \frac{|x|}{2}$, we derive from the above that
\begin{equation}\label{eq_appen_4}
\begin{aligned}
&|\nabla_x W_1(x,t)|\\
&\le \frac{C}{|x|^{N-2s+1}}\int_{|y-x| \ge \frac{|x|}{2}} \frac{1}{(1+|y|^2)^{N+2s \over 2}} dy
+ \frac{C}{|x|^{N+2s+1}}\int_{|y-x| \le \frac{|x|}{2}}  \frac{1}{|(x-y,t)|^{N-2s}} dy + O \(\frac{|x|^{N-1}}{|x|^{2N}}\)
\\
& = O\(\frac{1}{|x|^{N-2s+1}}\) + O\(\frac{1}{|x|^{N+2s+1}} \cdot |x|^{2s}\) + O\(\frac{|x|^{N-1}}{|x|^{2N}}\) \le \frac{ C}{|x|^{N-2s+1}} \le \frac{ C}{|(x,t)|^{N-2s+1}},
\end{aligned} \end{equation}
which with \eqref{eq_appen_6} implies the first inequality of (ii).

On the other hand, for $|x| \ge |t|$ and $|y-x|\ge \frac{|x|}{2}$, we have
\begin{equation}\label{eq_appen_2}
\begin{aligned}
\int_{|y-x| \ge \frac{|x|}{2}} \frac{1}{(1+|x-y|^2)^{N+2s \over 2}} \frac{t}{|(y,t)|^{N-2s+2}} dy & \le \frac{1}{|x|^{N+2s}} \int_{\mr^N} \frac{t}{|(y,t)|^{N-2s+2}} dy
\\
&= \frac{1}{|x|^{N+2s}} \int_{\mr^N} \frac{t \cdot t^N}{t^{N-2s+2} |(y,1)|^{N-2s+2}} dy
\\
& = \frac{C t^{2s-1}}{|x|^{N+2s}} \le \frac{C t^{2s-1}}{|(x,t)|^{N+2s}}.
\end{aligned} \end{equation}
Moreover for $|x| \ge |t|$ and $|y-x| \le \frac{|x|}{2}$, it holds that $|y| \ge \frac{|x|}{2}$, from which we find
\begin{equation}\label{eq_appen_3}
\begin{aligned}
\int_{|y-x| \le \frac{|x|}{2}} \frac{1}{(1+|x-y|^2)^{N+2s \over 2}} \frac{t}{|(y,t)|^{N-2s+2}} dy
&\le \frac{t}{|x|^{N-2s+2}}\int_{\mr^N}\frac{1}{(1+|x-y|^2)^{N+2s \over 2}} dy
\\
&=\frac{C t}{|x|^{N-2s+2}} \le \frac{Ct}{|(x,t)|^{N-2s+2}}\le \frac{C}{|(x,t)|^{N-2s+1}}.
\end{aligned} \end{equation}
As a result, thanks to \eqref{eq_appen_2} and \eqref{eq_appen_3}, we obtain
\begin{equation}\label{eq_appen_5}
|\pa_t W_1(x,t)| \le C \int \frac{1}{(1+|x-y|^2)^{N+2s \over 2}}\frac{t}{|(y,t)|^{N-2s+2}} dy \le  \frac{t^{2s-1}}{|(x,t)|^{N+2s}}+\frac{1}{|(x,t)|^{N-2s+1}}.
\end{equation}
Now \eqref{eq_appen_6}, \eqref{eq_appen_4} and \eqref{eq_appen_5} give us the second inequality of (i).

\medskip
\noindent \emph{Step 3: Estimates of $|\nabla \pa_i W_1|$, $|\pa_{\delta}W_1|$ and $|\nabla \pa_{\delta}W_1|$.}
Following the same procedure which was applied to $W_1$ and $\nabla W_1$ in Steps 1 and 2, one can find an upper bound of $|\nabla \pa_i W_1|$ for each $i = 1, \cdots, N$, and in particular the second inequality of (ii).

Meanwhile, we discover from \eqref{eq_Green} that
\[\pa_{\delta}W_1(x,t) = \mathfrak{b}_{N,s} \({N+2s \over 2}\) \int_{\mr^N} \frac{|y|^2-1}{(1+ |y|^2)^{\frac{N+2s}{2}+1}} \frac{1}{|(x-y,t)|^{N-2s}} dy.\]
Because
\[\frac{\left||y|^2-1\right|}{(1+|y|^2)^{\frac{N+2s}{2}+1}} \le \frac{1}{(1+|y|^2)^{N+2s \over 2}} \quad \text{for any } y \in \mr^N,\]
we can get (iii) by adopting the argument in Steps 1 and 2 once more.
This completes the proof.
\end{proof}

\small

\Addresses
\end{document}